\numberwithin{equation}{section}
\newtheorem{theorem}[equation]{Theorem}
\newtheorem{corollary}[equation]{Corollary}
\newtheorem{prop}[equation]{Proposition}
\newtheorem{lemma}[equation]{Lemma}
\theoremstyle{definition}
\newtheorem{remark}[equation]{Remark}
\newtheorem{notation}[equation]{Notation}
\newtheorem{defn}[equation]{Definition}
\newtheorem{example}[equation]{Example}
\newcommand{\padma}[1]{{\color{Magenta} \sf $\clubsuit\clubsuit\clubsuit$ Padma: [#1]}}
\newcommand{\ol}[1]{\overline{#1}}
\newcommand{\mc}[1]{\mathcal{#1}}
\newcommand{\mf}[1]{\mathfrak{#1}}
\newcommand{\rats}{\mathbb Q}
\newcommand{\Z}{\mathbb Z}
\newcommand{\ints}{\mathbb Z}
\renewcommand{\O}{\mathcal{O}}
\newcommand{\aff}{\mathbb A}
\newcommand{\FF}{\mathbb F}
\renewcommand{\P}{\mathbb P}
\newcommand{\proj}{\mathbb P}
\renewcommand{\phi}{\varphi}
\newcommand{\Spec}{\mathrm{Spec} \ }
\DeclareMathOperator{\Gal}{Gal}
\DeclareMathOperator{\divi}{div}
\DeclareMathOperator{\reg}{reg}
\DeclareMathOperator{\lcm}{lcm}
\DeclareMathOperator{\Proj}{Proj}
\title{Explicit minimal embedded resolutions of divisors on models of the projective line}
\author{Andrew Obus}
\address{Baruch College}
\curraddr{1 Bernard Baruch Way. New York, NY 10010, USA}
\email{andrewobus@gmail.com}
\thanks{The first author was supported by the National Science
  Foundation under DMS Grants No. 1602054, 1900396, and 2047638.  He was also
  supported by a grant from the Simons Foundation (\#706038: AO), as
  well as a PSC-CUNY Award, jointly funded by The Professional Staff
Congress and The City University of New York.}
\author{Padmavathi Srinivasan}
\address{University of Georgia}
\curraddr{452 Boyd Graduate Studies, 1023 D. W. Brooks Drive, Athens, GA 30602.}
\email{Padmavathi.Srinivasan@uga.edu}
\subjclass[2010]{Primary: 14B05, 14J17; Secondary: 13F30, 14H25}
\keywords{Mac Lane valaution, embedded resolution, regular model}
\date{\today}
\begin{document}

\maketitle

\begin{abstract}
  Let $K$ be a discretely valued field with ring of integers $\mc{O}_K$ with perfect residue field. Let $K(x)$ be the rational function field in one variable. Let $\P^1_{\mc{O}_K}$ be the standard smooth model of $\P^1_K$ with coordinate $x$. Let $f(x) \in \mc{O}_K[x]$ be a squarefree polynomial with corresponding divisor of zeroes $\divi_0(f)$ on $\P^1_{\mc{O}_K}$.
  We give an explicit description of the minimal embedded resolution $\mc{Y}$ of the pair $(\P^1_{\mc{O}_K}, \divi_0(f))$ by using Mac Lane's theory to write down the discrete valuations on $K(x)$ corresponding to the irreducible components of the special fiber of $\mc{Y}$.  
\end{abstract}


\section{Introduction}\label{Sintro}
Let $K$ be a discretely valued field with ring of integers $\mc{O}_K$ with perfect residue field. Let $K(x)$ be the rational function field in one variable. Let $\P^1_{\mc{O}_K}$ be the standard smooth model of $\P^1_K$ with coordinate $x$. Let $f(x) \in \mc{O}_K[x]$  with corresponding divisor of zeroes $\divi_0(f)$ on $\P^1_{\mc{O}_K}$. A minimal embedded resolution of the pair $(\P^1_{\mc{O}_K}, \divi_0(f))$ is a regular model $\mc{Y}$ of $\P^1_K$ with a birational morphism $\pi \colon \mc{Y} \rightarrow \P^1_{\mc{O}_K}$ such that the strict transform of $\divi_0(f)$ is regular, and such that any other modification $\pi' \colon \mc{Y}' \rightarrow \P^1_{\mc{O}_K}$ with $\mc{Y}'$ regular and the strict transform of $\divi_0(f)$ regular factors uniquely as $\mc{Y}' \rightarrow \mc{Y} \xrightarrow{\pi} \P^1_{\mc{O}_K}$.\footnote{Note that such a resolution exists only when $f$ is squarefree.} The main result of this paper is the following theorem (See Theorem~\ref{Thorizontalregular} for a more precise statement, with notation as defined in Notation~\ref{Nyfprime}.  Also see the last paragraph of \S\ref{Soutline}.)


\begin{theorem}\label{Tintro}
Let $f \in \mc{O}_K[x]$ be a squarefree polynomial. There is an explicit description of the minimal embedded resolution $\mc{Y}$ of the pair $(\P^1_{\mc{O}_K}, \divi_0(f))$ when $\deg(f) \geq 2$\footnote{When $\deg(f) = 1$, the divisor $\divi_0(f)$ is already regular on the standard model $\P^1_{\mc{O}_K}$.}. More specifically, we write down the discrete valuations on $K(x)$ corresponding to the irreducible components of the special fiber of $\mc{Y}$.  
\end{theorem}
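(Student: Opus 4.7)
The plan is to construct $\mc{Y}$ directly by attaching to the irreducible polynomial $f$ a canonical finite family of Mac Lane inductive valuations, and then to verify that the model determined by these valuations is regular, that the strict transform of $\divi_0(f)$ is regular, and that no proper subconfiguration enjoys both properties.

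First, I would apply Mac Lane's theory to build the chain of key polynomials $\phi_0 = x - a_0, \phi_1, \dots, \phi_n$ and the associated inductive valuations $v_0 < v_1 < \cdots < v_n$ on $K(x)$, where $v_0$ is the Gauss valuation and $v_i$ is obtained from $v_{i-1}$ by augmentation along $\phi_i$. For a monic irreducible $f$, Mac Lane's theorem guarantees a unique limit valuation $v_f$ with $v_f(f) = \infty$, and by choosing the chain long enough (truncating just before $v_f$) one obtains a finite inductive valuation whose ``last key polynomial'' reduction sees the divisor $\divi_0(f)$ as a regular horizontal divisor. I would take the set of valuations of $\mc{Y}$ to consist of $v_0, v_1, \ldots, v_n$, augmented by any extra ``bridging'' Mac Lane valuations forced in between consecutive $v_{i-1}, v_i$ by the requirement that the denominators of the values occurring in the inductive step increase by strictly compatible factors rather than jumping.

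Second, I would invoke the dictionary between finite sets of discrete valuations on $K(x)$ extending the valuation on $K$ and models of $\P^1_K$ dominating $\P^1_{\mc{O}_K}$: each valuation corresponds to an irreducible component of the special fiber, with multiplicity equal to the ramification index over $\mc{O}_K$. Regularity at a node where two components meet is equivalent to a simple arithmetic condition on the denominators of consecutive Mac Lane valuations (essentially, that one divides the other by a prime). Regularity of the strict transform of $\divi_0(f)$ at the point where it meets the special fiber reduces to a statement about the residue of $f$ with respect to the terminal valuation $v_n$: because $v_n$ is chosen to separate $f$ from the other prime factors modulo its residue field, the strict transform is smooth there.

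Third, I would establish minimality by a contraction argument: any component corresponding to an inductive valuation $v_i$ that could be contracted must be a $(-1)$-curve meeting only one other component, but the Mac Lane construction is set up so that $v_i$ either carries part of the strict transform of $\divi_0(f)$ or lies at a node whose removal would violate the regularity criterion for denominators established in the previous step. The main obstacle I anticipate is bookkeeping the auxiliary ``bridging'' valuations cleanly: Mac Lane's augmentation can increase the denominator of the valuation by a large factor in a single step, and one must interpolate with further valuations to keep the model regular while ensuring the interpolation is itself canonical and contributes no superfluous components. Careful combinatorial tracking of these denominators, together with the residual degrees of the key polynomials, should yield both the explicit list of valuations promised by Theorem~\ref{Tintro} and the minimality of the resulting $\mc{Y}$.
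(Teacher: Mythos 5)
There is a genuine gap, and it is precisely at the point your proposal treats most casually: minimality. The model you construct --- the one whose special-fiber components correspond to $v_0, v_1, \dots, v_n = v_f$ together with the ``bridging'' valuations needed for regularity --- is essentially $\mc{Y}_{v_f,0}^{\reg}$, the minimal regular resolution of the $\{v_0, v_f\}$-model. This is indeed an embedded resolution of $(\P^1_{\mc{O}_K}, \divi_0(f))$, but it is \emph{never} the minimal one. The $v_f$-component is a $-1$-curve on $\mc{Y}_{v_f,0}^{\reg}$ and can be contracted, and the strict transform of $\divi_0(f)$ stays regular after that contraction and after contracting entire further tails of the dual graph; the correct answer is that the minimal embedded resolution is $\mc{Y}_{v_f',0}^{\reg}$ or $\mc{Y}_{v_f'',0}^{\reg}$, where $v_f'$ and $v_f''$ are the successor and precursor valuations of $v_f$ (its Farey neighbours on the shortest $N_n$-path), the choice governed by which of $e(v_f'/v_0)$, $e(v_f''/v_0)$ is smaller. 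In particular the minimal model does \emph{not} include $v_f$ itself, so your claim that each $v_i$ ``either carries part of the strict transform of $\divi_0(f)$ or lies at a node whose removal would violate the regularity criterion'' is false for $i = n$, and the contraction argument you sketch would certify minimality of a model that is strictly too large.

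The reason this cannot be patched by a routine $(-1)$-curve bookkeeping argument is that the hard question is exactly where, along the canonical chain of blow-downs of $\mc{Y}_{v_f,0}^{\reg}$, the strict transform $D_\alpha$ first fails to be regular --- and on the contracted models $D_\alpha$ can specialize to a node, so regularity is no longer visible from ``the residue of $f$ with respect to $v_n$'' as you suggest. One has to write down an explicit rational function $sf/\phi_n^b$ cutting out the horizontal divisor on each contracted model, expand it term by term via the $\phi_n$-adic expansion $f = \phi_n^e + a_{e-1}\phi_n^{e-1} + \cdots + a_0$, and decide, using the orders of vanishing of the terms $sa_i\phi_n^{i-b}$ along the one or two vertical components through the specialization point, whether the whole function lies in $\mf{m}_{\mc{Y},y}^2$. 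That analysis shows $D_\alpha$ is not regular on the ``Type III'' contraction (where all components of the form $[v_{n-1}, \, \cdot(\phi_n) = \lambda]$ are gone) but is regular on a contraction exactly when it still includes $v_f'$ or $v_f''$; none of this is accessible from the smoothness-of-residue observation in your second step, and without it neither the identification of the minimal model nor its minimality can be established.
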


\begin{remark}
This minimal embedded resolution is a key technical input to \cite{OS1}, where it is used to help prove a conductor-discriminant inequality for hyperelliptic curves in residue characteristic $\neq 2$, as we now describe. 
\end{remark}

It is well-known that an algorithm for strong embedded resolution of singularities in dimension $n-1$ gives rise to an algorithm for resolution of singularities in dimension $n$. 
The motivation for the current paper is to explicitly understand regular models of cyclic covers of $\P^1_K$ branched at $\divi_0(f)$ by explicitly constructing embedded resolutions of pairs $(\P^1_{\mc{O}_K}, \divi_0(f))$ first. The eventual goal of these constructions is to give an upper bound on the number of components in the exceptional fiber of such a resolution; see \cite{OS1} for an application to proving conductor-discriminant inequalities for degree $2$ covers of $\P^1_K$, and forthcoming work of the authors for higher degree cyclic covers. We do so by capitalizing on the recent revival in \cite{Ruth, ObusWewers} of explicit descriptions of normal and regular models of $\P^1_K$, using descriptions of valuations of $K(x)$ (now called ``Mac Lane valuations'')  going back to Mac Lane \cite{MacLane}. 

In \cite[Proposition~3.4]{Ruth}, R\"uth shows that normal models of $\P^1_K$ are in bijection with non-empty finite collections of discrete valuations on $K(x)$ (extending the given valuation on $K$) whose residue fields have transcendence degree $1$ over the residue field of $K$. 
Over algebraically closed fields, it is known that analogous valuations with value group $\rats$ on the rational function field can be constructed from supremum norms on non-archimedean disks. Over non-algebraically closed discretely valued fields, R\"uth   (\cite[Proposition~4.56]{Ruth}, restated in Proposition~\ref{Pvaldiskoid}) shows that there is a similar description of valuations in terms of ``diskoids'', which are Galois stable collections of non-archimedean disks defined over the algebraic closure. In fact, he shows that these diskoids can be explicitly described by giving a certain sequence of polynomials $\phi_i$ in $K[x]$ of increasing degree (whose roots correspond to the ``centers'' of a nested sequence of diskoids) and a corresponding sequence of rational numbers $\lambda_i$ (``radii'' of the diskoids) -- such a description goes back to Mac Lane \cite{MacLane} from 1936. These $\phi_i$ can be thought of as successive lower degree approximations to the roots of a polynomial $f \in \mc{O}_K[x]$, and each rational number $\lambda_i$ is simply $\nu_K(\phi_i(\alpha))$ for any root $\alpha$ of $f$ (Corollary~\ref{Cpseudoeval}). Using successive $\phi_i$-adic expansions, one can easily compute the valuation of any given polynomial from this description, by a procedure analogous to the computation of the Gauss valuation; see the discussion surrounding (\ref{E:augval}).
Mac Lane valuations have been implemented in Sage in \cite{RuthSage}. In \cite[Theorem~7.8]{ObusWewers} (restated here in Proposition~\ref{Pgeneralresolution}), the authors describe the minimal regular resolution of a model of $\P^1_K$ with irreducible special fiber corresponding to a valuation $v$, using the same polynomials $\phi_i$ that show up in the description of $v$, and natural Farey paths between successive $\lambda_i$.   
%
%

The bulk of the paper is devoted to proving Theorem~\ref{Thorizontalregular}, which is Theorem~\ref{Tintro} in the case where $f$ is monic and irreducible and the residue field $k$ is algebraically closed.  The general result can easily be derived from Theorem~\ref{Thorizontalregular}; see Remarks~\ref{Rreducible} and \ref{Retaledescent}.  So for the rest of the introduction, assume $f$ is monic and irreducible and $k$ is algebraically closed.  To each such $f \in \O_K[x]$, there is a canonical diskoid centered about the roots of $f$ giving rise to a valuation $v_f$ on $K(x)$ (\S\ref{Sregularmodelprelims}). By R\"uth's correspondence, this valuation $v_f$ corresponds to a normal model of $\P^1_K$ with irreducible special fiber, which we will call the $v_f$-model. By $v_f$-component, we mean the strict transform of the special fiber of the $v_f$-model in any model that dominates it. In what follows, we use $\divi_0(f)$ to mean the zero divisor of $f$ on any model of $\proj^1_K$; the model will be clear from context.

Concurrent to our work in \cite{OS1old} (an earlier version of \cite{OS1}), 
in \cite[Theorem~3.16]{KW}, the authors also noted that $\divi_0(f)$ is a normal crossings divisor on the minimal regular resolution $\mc{Y}_{v_f}^{\reg}$ of the $v_f$-model $\mc{Y}_{v_f}$, which implies that the minimal regular model $\mc{Y}_{v_f,0}^{\reg}$ dominating $\mc{Y}_{v_f}^{\reg}$ and $\proj^1_{\mc{O}_K}$ is an embedded resolution of $(\proj^1_{\mc{O}_K}, \divi_0(f))$. However, $\mc{Y}_{v_f,0}^{\reg}$ is never the \textit{minimal} embedded resolution of the pair $(\proj^1_{\mc{O}_K}, \divi_0(f))$. In fact, for the applications to regular models of hyperelliptic curves, we are sometimes forced to work with (regular) contractions of $\mc{Y}_{v_f,0}^{\reg}$ where the strict transform of $\divi_0(f)$ is also regular. Determining whether the horizontal part of $\divi_0(f)$ remains regular on these contractions can be challenging because it might specialize to a node.

The two main insights of this paper are the following. First, using the machinery of Mac Lane valuations, it is possible to explicitly modify $f$ to write down a rational function $g$ that cuts out the unique irreducible \textit{horizontal} divisor in $\divi_0(f)$ on natural contractions of $\mc{Y}_{v_f,0}^{\reg}$. Note that checking regularity of $\divi_0(g)$ at its unique closed point $y$ is equivalent to checking whether $g$ is in the square of the maximal ideal at $y$. This is hard to check directly since this local ring is $2$-dimensional. The second main insight is to use the $\phi_n$-adic\footnote{Here $\phi_n$ is the last polynomial that shows up in the Mac Lane description of $v_f$.} expansion of $f$ to write down an analogous explicit decomposition $g = \sum_i g_i$. The terms $g_i$ in this decomposition vanish along \textit{vertical} components through the closed point $y$ (a computation back in a $1$-dimensional local ring), even though $g$ itself does not, and we can exploit the orders of vanishing to determine when $g$ is in the square of the maximal ideal.
It turns out that the Mac Lane descriptions of vertical components are tailor-made for computing orders of vanishing of functions along these components!

%

Our main theorem shows that quite often it is possible to contract entire tails in the dual graph of $\mc{Y}_{v_f,0}^{\reg}$ and in fact, the minimal embedded resolution we are after is the minimal regular resolution of one of two neighbouring components of the $v_f$-component in the dual graph of $\mc{Y}_{v_f,0}^{\reg}$. We do not see any way to deduce our main theorem directly from \cite[Theorem~3.16]{KW}.

\subsection{Outline of the paper}\label{Soutline}

 In \S\ref{Smaclane}, we introduce Mac Lane valuations.  As we have
mentioned, a normal model of $\proj^1_K$ corresponds to a finite set of Mac Lane
valuations, one valuation for each irreducible component of the
special fiber.  Mac Lane valuations are also in one-to-one
correspondence with \emph{diskoids}, which are Galois orbits of
rigid-analytic disks in $\proj^1_{\ol{K}}$.  We will use the diskoid
perspective often, and it is introduced in 
in \S\ref{Sdiskoids}.

In \S\ref{Smaclanemodels}, we prove several results about the
correspondence between Mac Lane valuations and normal models of $\proj^1_K$.  For
instance, if $\mc{Y}$ is a normal model of $\proj^1_K$ with special
fiber consisting of several irreducible components, each corresponding
to a Mac Lane valuation, results in \S\ref{Smaclanemodels} can be used
to determine which irreducible component a point of $\proj^1_K$
specializes to.  After this, we cite a result (Proposition~\ref{Pgeneralresolution}) from \cite{ObusWewers}
giving an explicit criterion for when a normal model of $\proj^1_K$ is
regular.  More specifically, using that Mac Lane valuations correspond to
normal models of $\proj^1_K$ with irreducible special fiber,
Proposition~\ref{Pgeneralresolution} takes a Mac Lane valuation as
input and gives the minimal regular resolution of the corresponding
normal model as output (as a finite set of Mac Lane valuations, of course)!

In \S\ref{Shorizontal}, we first define the canonical valuation $v_f$ associated to a polynomial $f$. The minimal embedded resolution of the pair $(\P^1_{\mc{O}_K}, \divi_0(f))$ is a certain contraction of $\mc{Y}_{v_f,0}^{\reg}$. So we are lead to an analysis of regularity of the strict transform of $\divi_0(f)$, which we will henceforth call $D$, on natural contractions of $\mc{Y}_{v_f,0}^{\reg}$. To this end, in \S\ref{Shorizontal} we first define three types of regular models of $\P^1_K$ that can arise as contractions of $\mc{Y}_{v_f,0}^{\reg}$. Viewing these contractions as a sequence of closed point blow-downs, a short argument shows that if we want the blow-down to stay regular and dominate $\proj^1_{\mc{O}_K}$, there is a unique component that can be blown down at every stage (for instance, the $v_f$-component is the only $-1$-component that can be blown down in the model $\mc{Y}_{v_f,0}^{\reg}$ by the \emph{minimality} of the construction of $\mc{Y}_{v_f, 0}^{\reg}$). As we proceed through this natural sequence of blow-downs, we first go through a sequence of models we call ``Type I'' models. If $D$ stays regular on all Type I regular blow-downs of $\mc{Y}_{v_f,0}^{\reg}$, we then move on to the ``Type II'' models. We continue contracting in this way, and after the Type II models, naturally comes the unique ``Type III'' model. (See Definition~\ref{D3types}.) 

In \S\ref{Sproof}, we run thus argument.  The crux is to show that $D$ is \textit{not} regular on the unique Type III model (Proposition~\ref{Pmodeltoosmall}), and we use this to show that the minimal embedded resolution of $D$ must be a special Type I or a Type II model (Corollary~\ref{CIorII}). We then show that if $D$ is regular on a Type I or Type II model, then the model must include a component corresponding to one of two additional canonical valuations attached to the polynomial $f$, denoted $v_f'$, $v^{''}_f$ (Proposition~\ref{Pregular}) -- these turn out to be neighbouring valuations to $v_f$ in the dual graph of $\mc{Y}_{v_f,0}^{\reg}$. The technical lemmas needed for these regularity arguments use an analysis of valuations of individual terms in the $\phi_n$-adic expansion of $f$ along \textit{vertical} components of these models (Lemma~\ref{LtypeIII} for the unique Type III model, and Lemma~\ref{Lindividualterms} for Type I and Type II models). The Mac Lane machinery for describing these vertical components is perfectly equipped for carrying out such calculations. Finally, in Theorem~\ref{Thorizontalregular}, we show that the minimal embedded resolution of the pair $(\P^1_{\mc{O}_K}, \divi_0(f))$ is the minimal regular model dominating $\proj^1_{\mc{O}_K}$ and either the $v_f'$-model or the $v^{''}_f$-model.

\section*{Notation and conventions}
Throughout, $K$ is a Henselian field with respect to a
discrete valuation $\nu_K$.   In much of the paper (\S\ref{Salgclosed}, \S\ref{Smaclanemodels}, \S\ref{Shorizontal}, and all of \S\ref{Sproof} until the very end) we will further assume
that the residue field $k$ of $K$ is \emph{algebraically closed}, but this will be noted specifically and is not a running assumption for the paper.
We denote an algebraic closure of $K$ by $\ol{K}$.
We fix a uniformizer $\pi_K$ of $\nu_K$ and normalize $\nu_K$ so that $\nu_K(\pi_K) = 1$. Note that the valuation $\nu_K$ uniquely extends to a valuation on $\ol{K}$, which we also call $\nu_K$.


For an integral $K$-scheme or $\mc{O}_K$-scheme $S$, we denote the corresponding function
field by $K(S)$. If $\mc{Y} \rightarrow \mc{O}_K$ is an arithmetic
surface, an irreducible codimension 1 subscheme of $\mc{Y}$ is called
\emph{vertical} if it lies in a fiber of $\mc{Y} \to \mc{O}_K$, and
\emph{horizontal} otherwise. Let $f \in K(\mathcal{Y})$. We denote the
divisor of zeroes of $f$ by $\divi_0(f)$. 
For any discrete valuation $v$, we denote the corresponding value group by $\Gamma_v$.
If $P$ is a closed point on $\mc{Y}$, we denote the corresponding local ring by $\mc{O}_{\mc{Y},P}$ and maximal ideal by $\mathfrak{m}_{\mc{Y},P}$. 

Throughout this paper, we fix a system of homogeneous coordinates $\P^1_{K} =
\Proj K[x_0,x_1]$, and $x \colonequals x_1/x_0$ and $\P^1_{\O_K}
\colonequals \Proj \mc{O}_K[x_0,x_1]$.  

All minimal polynomials are assumed to be monic.  
When we refer to the \emph{denominator} of a rational
number, we mean the positive denominator when the rational number is
expressed as a reduced fraction.

\section*{Acknowledgements}
The authors would like to acknowledge the hospitality of the
Mathematisches Forschungsinstitut Oberwolfach, where they participated in the
``Research in Pairs'' program that was integral to the writing of this paper.  They would also like to thank Dino Lorenzini for useful conversations, and the referees for their thoughtful comments and suggestions to improve the exposition.

\section*{Data availability statement}
Data sharing not applicable to this article as no datasets were generated or analysed during the current study.

\section{Mac Lane valuations}\label{Smaclane}

\subsection{Definitions and facts}\label{Sbasicmaclane}

We recall the theory of inductive valuations,
which was first developed by Mac Lane in \cite{MacLane}.  We also use
the more recent \cite{Ruth} as a reference.  Inductive valuations give us an
explicit way to talk about normal models of $\proj^1$.

Define a \emph{geometric valuation} of
$K(x)$ to be a discrete valuation that restricts to $\nu_K$ on $K$ and
whose residue field is a finitely generated extension of $k$ with
transcendence degree $1$.   We place a partial order $\preceq$ on valuations by defining $v
\preceq w$ if $v(f) \leq w(f)$ for all $f \in K[x]$.  Let $v_0$ be the
\emph{Gauss valuation} on $K(x)$.  This is defined on $K[x]$ by
$v_0(a_0 + a_1x + \cdots a_nx^n) = \min_{0 \leq i \leq n}\nu_K(a_i)$,
and then extended to $K(x)$.

We consider geometric valuations $v$ such that $v \succeq v_0$. By the triangle inequality, these are precisely those geometric valuations for which
$v(x) \geq 0$.  This entails no loss of generality, since $x$ can
always be replaced by $x^{-1}$. We would like an explicit formula for describing geometric valuations, similar to the formula above for the Gauss valuation, and this is achieved by the so-called
\emph{inductive valuations} or \emph{Mac Lane valuations}. Observe that the Gauss valuation is described using the $x$-adic expansion of a polynomial. The idea of a Mac Lane valuation is to ``declare'' certain polynomials $\phi_i$ to have higher valuation than expected, and then to compute the valuation recursively using $\phi_i$-adic expansions.

More specifically, if $v$ is a geometric valuation such that $v
\succeq v_0$, the concept of a \emph{key polynomial} over $v$ is
defined in \cite[Definition 4.1]{MacLane} (or \cite[Definition 4.7]{Ruth}).  Key polynomials are monic polynomials
in $\mc{O}_K[x]$ --- we do not give a definition, which would require more
terminology than we need to develop, but see Lemmas \ref{Lfdegreebasic} and \ref{Lfdegree} below for the most useful properties.  If $\phi \in \mc{O}_K[x]$ is a
key polynomial over $v$, then for $\lambda > v(\phi)$, 
we define an \emph{augmented valuation} $v' = [v, v'(\phi) = \lambda]$ on $K[x]$ by 
\begin{equation}\label{E:augval} v'(a_0 + a_1\phi + \cdots + a_r\phi^r) = \min_{0 \leq i \leq r}
v(a_i) + i\lambda \end{equation} whenever the $a_i \in K[x]$ are polynomials with
degree less than $\deg(\phi)$.  We should think of this as a ``base
$\phi$ expansion'', and of $v'(f)$ as being the minimum valuation of a
term in the base $\phi$ expansion of $f$ when the valuation of $\phi$ is
declared to be $\lambda$.  By \cite[Theorems 4.2, 5.1]{MacLane} (see also
 \cite[Lemmas 4.11, 4.17]{Ruth}), $v'$ is in fact a discrete
 valuation.  In fact, the key polynomials are more or less the polynomials
 $\phi$ for which the construction above yields a discrete valuation
 for $\lambda > v(\phi)$.
 The valuation $v'$ extends to $K(x)$. 

We extend this notation to write Mac Lane valuations in the following
form: $$[v_0, v_1(\phi_1(x)) = \lambda_1, \ldots, v_n(\phi_n(x)) = \lambda_n].$$
Here each $\phi_i(x) \in \mc{O}_K[x]$ is a key polynomial over $v_{i-1}$, we
have that $\deg(\phi_{i-1}(x)) \mid \deg(\phi_i(x))$, and each $\lambda_i$
satisfies $\lambda_i > v_{i-1}(\phi_i(x))$.  By abuse of notation,
we refer to such a valuation as $v_n$ (if we have not given it another
name), and we identify $v_{i}$ with $[v_0, v_1(\phi_1(x)) = \lambda_1, \ldots,
v_{i}(\phi_{i}(x)) = \lambda_{i}]$ for each $i \leq n$.  The valuation
$v_i$ is called a \emph{truncation} of $v_n$.  One sees without much
difficulty that $v_n(\phi_i) = \lambda_i$ for all $i$ between $1$ and
$n$.  

It turns out that the set of Mac Lane valuations on $K(x)$ exactly
coincides with the set of geometric valuations $v$ with $v \succeq
v_0$ (\cite[Corollary 7.4]{FGMN} and \cite[Theorem 8.1]{MacLane}, or \cite[Theorem
4.31]{Ruth}). Furthermore, 
every Mac Lane valuation is equal to one where the degrees of the
$\phi_i$ are strictly increasing (\cite[Lemma 15.1]{MacLane} or \cite[Remark 4.16]{Ruth}), so we may and do assume this to be the case for the rest of the paper.
This has the consequence
that the number $n$ is well-defined.  We call $n$ the
\emph{inductive valuation length} of $v$.  In fact, by \cite[Lemma
15.3]{MacLane} (or \cite[Lemma
4.33]{Ruth}), the degrees of the $\phi_i$ and the values of the $\lambda_i$ are invariants of $v$, once
we require that they be strictly increasing.  If $f$ is a key polynomial
over $v = [v_0,\, v_1(\phi_1) = \lambda_1, \ldots,\, v_n(\phi_n) =
\lambda_n]$ and either $\deg(f) > \deg(\phi_n)$ or $v = v_0$, we call $f$ a \emph{proper
  key polynomial over $v$}.  By our convention, each $\phi_i$ is a proper key
polynomial over $v_{i-1}$.  

We collect some basic results on Mac Lane valuations and key
polynomials that will be used repeatedly.  
\begin{lemma}\label{Lfdegreebasic}
Suppose $f$ is a proper key polynomial over $v = [v_0,\, v_1(\phi_1) = \lambda_1,
\ldots,\, v_n(\phi_n) = \lambda_n]$ with $n \geq 1$. If $f = \phi_n^e + a_{e-1}\phi_n^{e-1} + \cdots
  + a_0$ is the $\phi_n$-adic expansion of $f$, then $v_n(a_0) =
  v_n(\phi_n^e) = e\lambda_n$, and $v_n(a_i\phi_n^i) \geq e\lambda_n$
  for all $i \in \{1, \ldots, e-1\}$. In particular, $v_n(f) = e\lambda_n$.
\end{lemma}

\begin{proof}
This follows from \cite[Theorem 9.4]{MacLane} (or \cite[Lemma 4.19(ii),
(iii)]{Ruth}).
\end{proof}

\begin{example}\label{Ebasickey}
If $K = \text{Frac}(W(\ol{\mathbb{F}}_3))$, then the polynomial $f(x) = x^3 - 9$ is a
proper key polynomial over $[v_0,\, v_1(x) = 2/3]$.  In
accordance with Lemma~\ref{Lfdegreebasic}, we have $v_1(f) = v_1(9) = v_1(x^3) =
3 \cdot 2/3 = 2$.  If we extend $v_1$ to a valuation $[v_0,\, v_1(x) =
2/3,\, v_2(f(x)) = \lambda_2]$ with $\lambda_2 > 2$, then the
valuation $v_2$ notices ``cancellation'' in $x^3 - 9$ that $v_1$ does not.
\end{example}

\subsection{Mac Lane valuations and diskoids}\label{Sdiskoids}
Given $\phi \in \mc{O}_K[x]$ monic, irreducible and $\lambda \in \rats_{\geq 0}$, we define
the \emph{diskoid} $D(\phi, \lambda)$ with ``center'' $\phi$ and radius $\lambda$ to
be $D(\phi, \lambda) \colonequals \{\alpha \in \ol{K} \mid \nu_K(\phi(\alpha)) \geq
\lambda\}$ (we only treat diskoids with \emph{non-negative, finite} radius in the
sense of \cite[Definition 4.40]{Ruth}). By \cite[Lemma~4.43]{Ruth}, a
diskoid is a union of a disk with all of its $\Gal(\overline{K}/K)$-conjugates.
Such a diskoid is said to be \emph{defined} over $K$, since $\phi \in \mc{O}_K[x]$.  Notice that the \emph{larger} $\lambda$ is, the \emph{smaller} the diskoid is.  We now state the fundamental
correspondence between Mac Lane valuations and diskoids.

\begin{prop}[{cf.\ \cite[Theorem 4.56]{Ruth}, see also
  \cite[Proposition 5.4]{ObusWewers}}]\label{Pvaldiskoid}
There is a bijection from the set of diskoids to the set of Mac Lane valuations that
sends a diskoid $D$ to the valuation $v_D$ defined by
$v_D(f) = \inf_{\alpha \in D} \nu_K(f(\alpha))$.  The inverse sends a
Mac Lane valuation $v = [v_0,\,\ldots, \, v_n(\phi_n) = \lambda_n]$ to
the diskoid $D_v$ defined by $D_v = D(\phi_n, \lambda_n)$.  Alternatively, 
$$D_v = \{\alpha \in \ol{K} \ \mid \ \nu_K(f(\alpha)) \geq v(f) \ \forall f \in K[x] \},$$
is a presentation of $D_v$ independent of the description of $v$ as a Mac Lane valuation. 

Lastly, if $D$ and $D'$ are
diskoids, then $D \subseteq D'$ if and only if $v_D \succeq v_{D'}$.  If $v$
and $v'$ are Mac Lane valuations, then $v \succeq v'$ if and only if
$D_v \subseteq D_{v'}$.
\end{prop}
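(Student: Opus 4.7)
The plan is to establish both maps, verify they are mutually inverse, and then deduce the alternative presentation and the order-reversing statement.  The main tool throughout will be the $\phi$-adic expansion of polynomials and the explicit augmented-valuation formula (\ref{E:augval}).

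First I would check that $v_D$ is a Mac Lane valuation, proceeding by induction on the degree of the center $\phi$ of $D = D(\phi,\lambda)$.  For the base case $\phi = x - a$, one verifies directly that $v_D$ is the augmented valuation $[v_0,\, v_1(x-a) = \lambda]$: expanding $f = \sum a_i (x-a)^i$ with $a_i \in K$, one has $\nu_K(f(\alpha)) \geq \min_i \nu_K(a_i) + i\lambda$ for any $\alpha \in D$ by the ultrametric inequality, and equality is achieved at any boundary point $\alpha$ with $\nu_K(\alpha - a) = \lambda$ with $\lambda \notin \Gamma_{v_0}$ (and one reduces to this case otherwise).  For higher-degree $\phi$, Lemma~\ref{Lfdegree} lets us view $\phi$ as a proper key polynomial over a shorter Mac Lane valuation $v'$, whose diskoid $D' \supsetneq D$.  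Expanding any $f \in K[x]$ in its $\phi$-adic expansion $f = \sum a_i \phi^i$, and applying the inductive hypothesis to compute $\nu_K(a_i(\alpha))$ for boundary points $\alpha$ of $D$, shows that $v_D$ equals the augmented valuation $[v', v_D(\phi) = \lambda]$.

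Next I would verify that for $v = [v_0,\ldots,\, v_n(\phi_n) = \lambda_n]$, the diskoid $D_v := D(\phi_n,\lambda_n)$ satisfies $v_{D_v} = v$.  The inequality $v \preceq v_{D_v}$ is the easier direction: for any $\alpha \in D_v$, the inductive analysis above applied to the truncation $v_{n-1}$ (whose diskoid strictly contains $D_v$) gives $\nu_K(a_i(\alpha)) \geq v_{n-1}(a_i)$ for the coefficients of the $\phi_n$-adic expansion of $f$, and combining with $\nu_K(\phi_n(\alpha)) \geq \lambda_n$ via the ultrametric inequality yields $\nu_K(f(\alpha)) \geq v(f)$.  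For the reverse, one must exhibit $\alpha \in D_v$ achieving $\nu_K(f(\alpha)) = v(f)$; it suffices to find a boundary point at which $\nu_K(\phi_n(\alpha)) = \lambda_n$ and simultaneously $\nu_K(a_i(\alpha)) = v_{n-1}(a_i)$ for all finitely many relevant $i$.  Such $\alpha$ exist because, inductively, the truncated valuation $v_{n-1}$ equals $\nu_K \circ \mathrm{eval}_\alpha$ on polynomials of degree $< \deg \phi_n$ for a Zariski-dense set of boundary points, and the condition $\nu_K(\phi_n(\alpha)) = \lambda_n$ cuts out a nonempty subset thereof (using Lemma~\ref{Lvnvprime} to ensure $\lambda_n \notin \Gamma_{v_{n-1}}$).

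The alternative presentation $D_v = \{\alpha : \nu_K(f(\alpha)) \geq v(f) \text{ for all } f \in K[x]\}$ then follows: the inclusion $\subseteq$ was just shown, and for $\supseteq$ one applies the defining inequality with $f = \phi_n$ to see $\nu_K(\phi_n(\alpha)) \geq \lambda_n$.  This presentation is manifestly independent of the chosen Mac Lane description of $v$.  Finally, the order-reversing property is immediate in one direction (infima over smaller sets are larger, so $D \subseteq D' \Rightarrow v_D \succeq v_{D'}$), and the reverse follows by applying the hypothesis $v_D \succeq v_{D'}$ to each $f \in K[x]$ and using the alternative presentation of $D_{v_{D'}} = D'$.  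The main obstacle is the existence of "generic" boundary points $\alpha \in D_v$ simultaneously attaining the expected truncated valuations on all coefficients $a_i$; this is the technical heart of Rüth's argument and relies on the interplay between Mac Lane's graded ring construction and the structure of the diskoid boundary.
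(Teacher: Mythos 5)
The paper offers no proof of this proposition at all: it is quoted directly from R\"uth's thesis (\cite[Theorem 4.56]{Ruth}, see also \cite[Proposition 5.4]{ObusWewers}), so there is no internal argument to compare yours against. Your sketch is essentially an outline of R\"uth's own development, but as a stand-alone proof it has genuine gaps. The most important one you name yourself: the existence of ``generic'' boundary points of the diskoid at which the infimum defining $v_D$ is attained simultaneously on $\phi_n$ and on all the coefficients $a_i$ is deferred to ``R\"uth's argument'' rather than proved. Since that statement is precisely what makes $v_D$ computable term-by-term from the $\phi_n$-adic expansion (and hence equal to the augmented valuation), deferring it leaves the central step of both directions of the bijection unestablished; what remains is a plan, not a proof. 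Relatedly, the base-case assertion that equality $\nu_K(f(\alpha)) = \min_i(\nu_K(a_i)+i\lambda)$ holds at \emph{any} boundary point once $\lambda \notin \Gamma_{v_0}$ is false: take $\phi_1 = x$, $\lambda = 1/2$, $f = x^2 - \pi_K$, and the boundary point $\alpha = \sqrt{\pi_K}$, where $\nu_K(f(\alpha)) = \infty$ while the expected value is $1$. Cancellation at special boundary points (those reducing to the same residue class as a root of $f$) can always occur, so genericity is needed even in the linear case.

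There is also a structural problem with the inductive step. You invoke Lemma~\ref{Lfdegree} to ``view $\phi$ as a proper key polynomial over a shorter Mac Lane valuation $v'$,'' but that lemma only records properties of key polynomials; the existence of such a $v'$ for an arbitrary monic irreducible center is Proposition~\ref{Pbestlowerapprox}, whose proof in this paper uses Proposition~\ref{Pvaldiskoid} itself (together with \cite[Corollary 4.67]{Ruth}), so as written the induction is circular within the paper's logical order --- one must either build $v'$ from the inductive hypothesis for smaller-degree centers or import the relevant pieces of R\"uth's key-polynomial machinery explicitly. Moreover the augmented valuation $[v',\, v_D(\phi) = \lambda]$ only makes sense when $\lambda > v'(\phi)$; diskoids $D(\phi,\lambda)$ with $\lambda \leq v'(\phi)$ coincide with diskoids having centers of strictly smaller degree, and this case needs to be identified and folded into the induction (it also affects the claim that $v_D$ ends in the step $(\phi,\lambda)$, which you use implicitly to get $D_{v_D} = D$). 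The remaining parts of your outline --- the alternative presentation, its presentation-independence, and the order-reversing equivalences --- do follow easily once these points are supplied.
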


The following proposition is crucial for our method.

\begin{prop}\label{Pbestlowerapprox}
Let $\alpha \in \mc{O}_{\ol{K}}$, and let $f \in K[x]$ be the
minimal polynomial for $\alpha$.  Then there exists a unique Mac Lane valuation $v_f = [v_0,\, \ldots,\, v_n(\phi_n) = \lambda_n]$ over which $f$ is a proper key polynomial. 
\end{prop}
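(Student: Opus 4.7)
The plan is to prove existence by iterative augmentation (mirroring Mac Lane's original construction) and uniqueness via the diskoid correspondence (Proposition~\ref{Pvaldiskoid}), exploiting the fact that diskoids containing a common point are nested.

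\textbf{Existence.} If $\deg(f) = 1$, I would take $v_f = v_0$; then $f$ is a proper key polynomial over $v_0$ by Lemma~\ref{Lfdegree}(i). For $\deg(f) \geq 2$, I would build $v_f$ inductively. Suppose I have constructed $v_i = [v_0,\,\dots,\,v_i(\phi_i) = \lambda_i]$ with $\alpha \in D_{v_i}$ and $\deg(\phi_i) < \deg(f)$. Test whether $f$ is already a proper key polynomial over $v_i$; if so, set $v_f := v_i$ and stop. Otherwise, since $\nu_K(f(\alpha)) = +\infty$ while $v_i(f)$ is finite, the $\phi_i$-adic expansion of $f$ must exhibit ``unexpected cancellation'' at $\alpha$. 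Mac Lane's equivalence-factorization theory then produces a proper key polynomial $\phi_{i+1}$ over $v_i$ with $\deg(\phi_i) < \deg(\phi_{i+1}) \leq \deg(f)$ and $\nu_K(\phi_{i+1}(\alpha)) > v_i(\phi_{i+1})$. I set $\lambda_{i+1} := \nu_K(\phi_{i+1}(\alpha))$ and iterate. The process must terminate because the sequence $\deg(\phi_i)$ is strictly increasing and bounded above by $\deg(f)$.

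\textbf{Uniqueness.} Suppose $v$ and $w$ each admit $f$ as a proper key polynomial. For any $\lambda' > v(f)$, the augmentation $v' = [v,\,v'(f) = \lambda']$ is a Mac Lane valuation with diskoid $D(f,\lambda') \subseteq D_v$, and $\alpha \in D(f,\lambda')$ since $\nu_K(f(\alpha)) = +\infty \geq \lambda'$; hence $\alpha \in D_v$, and likewise $\alpha \in D_w$. Two $K$-diskoids sharing a common point are nested, because non-archimedean disks over $\overline{K}$ sharing a point are nested, and this property passes to Galois orbits. So, without loss of generality, $D_v \subseteq D_w$, i.e.\ $v \succeq w$.

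\textbf{Main obstacle.} The delicate step is ruling out $v \succ w$. My approach is to decompose $v$ as a finite chain of augmentations $w = w_0 \prec w_1 \prec \dots \prec w_k = v$ and to show that each intermediate step is forced. Concretely, at each level $j$, the condition $\alpha \in D_{w_j}$ combined with the minimality of the existence construction pins down $\lambda_j = \nu_K(\phi_j(\alpha))$, and the degree constraint $\deg(f) = e(v/v_{n-1})\deg(\phi_n)$ from Lemma~\ref{Lfdegree}(iii) (together with the analogous constraint for $w$) forces the multiplicities in Remark~\ref{Rrelram} to agree. Since the tuples $(\deg(\phi_j), \lambda_j)$ are invariants of a Mac Lane valuation, this data determines $v$ uniquely from $\alpha$, yielding $v = w$ and completing the argument. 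The crux is the rigidity of this bookkeeping: any deviation between the augmentation data of $v$ and $w$ would either place $\alpha$ outside one of the two diskoids or force the next key polynomial to have degree $\geq \deg(f)$, violating the ``proper'' condition.
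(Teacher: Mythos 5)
Your reduction of uniqueness to nestedness of diskoids is fine (it is the same mechanism the paper uses elsewhere, via \cite[Lemma 4.44]{Ruth}), but the step you yourself flag as the ``main obstacle'' --- ruling out $v \succ w$ --- is not actually carried out, and the inference you lean on is backwards. That the tuples $(\deg\phi_j,\lambda_j)$ are invariants of a Mac Lane valuation means the valuation determines the data, not that the data determines the valuation: distinct valuations share the same numerical data (e.g.\ $[v_0,\,v_1(x)=1]$ and $[v_0,\,v_1(x-\pi_K)=1]$), because the centers $\phi_j$ are neither unique nor recoverable from $(\deg\phi_j,\lambda_j)$, and nothing in your sketch pins them down. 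What is genuinely needed here is the equality criterion for inductive valuations (\cite[Theorem 15.3]{MacLane}, \cite[Theorem 4.33]{Ruth}). The paper's uniqueness argument gets access to it by a small trick your proposal is missing: augment both candidate valuations by the key polynomial $f$ itself to a common large value $\lambda$, observe via Proposition~\ref{Pvaldiskoid} that both augmentations have diskoid $D(f,\lambda)$ and hence coincide as valuations, and then apply the equality criterion to the two presentations of this single valuation to force $v_f = v$. Without that (or an equivalent substitute), ``rigidity of the bookkeeping'' is an assertion, not a proof.

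The existence half also has a gap at the crucial point. Mac Lane's equivalence-factorization does \emph{not} hand you a key polynomial of strictly larger degree at each step: the new key polynomial only satisfies $\deg\phi_{i+1}\geq\deg\phi_i$, and same-degree refinements really occur (start your induction with $\phi_1 = x$ for $\alpha = \pi_K + \pi_K^2\sqrt{u}$ and you are forced to pass to $x-\pi_K$, still of degree $1$). So your termination argument ``degrees strictly increase and are bounded by $\deg f$'' does not apply as written; one must merge same-degree augmentations and then argue that only finitely many refinements happen at each fixed degree, which is exactly where the Henselianity of $K$ enters (a Krasner-type bound on $\nu_K(\phi(\alpha))$ for $\deg\phi < \deg f$), and which your sketch never invokes. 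The paper avoids rebuilding this machinery by quoting \cite[Corollary 4.67]{Ruth}: the unique extension of $\nu_K$ to $K[x]/(f)$ lifts to a pseudovaluation on $K[x]$ that is an infinite inductive valuation $[v_0,\ldots,v_n(\phi_n)=\lambda_n, v_{n+1}(f)=\infty]$, whose truncation is the desired $v_f$. If you want a self-contained existence proof along your lines, you must supply the fixed-degree termination argument and state where Henselianity is used; as written, both halves of the proposal have genuine holes.
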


\begin{proof}
Consider the unique valuation $v_L$ on $L := K[x]/(f)$
extending $v_K$.  This lifts to a discrete \emph{pseudovaluation} on $K[x]$ in the language of
\cite[\S4.6]{Ruth} (a valuation which can take the value $\infty$ on
an ideal, in this case $(f)$).  By \cite[Corollary 4.67]{Ruth}, it can
be written as a so-called ``infinite inductive valuation'' $[v_0,\, \ldots,\, v_n(\phi_n) =
  \lambda_n, v_{n+1}(f) = \infty]$, with $f$ a proper key polynomial over
  $v_f := [v_0,\, \ldots,\, v_n(\phi_n) =
  \lambda_n]$.  This shows the existence of $v_f$.   If $f$ is a proper key
  polynomial over some other valuation $v$, then for sufficiently
  large $\lambda$, one can construct inductive
  valuations $v_f' = [v_f,\, v_f'(f) = \lambda]$ and $v' = [v,\, v'(f) = \lambda]$.
  By Proposition~\ref{Pvaldiskoid}, these inductive valuations
  correspond to the same diskoid, and are thus the same.  Applying the
  ``only if'' direction of \cite[Theorem 4.33]{Ruth} (or \cite[Theorem
  15.3]{MacLane}) to $v_f'$ and
  $v'$, and then the ``if'' direction of the same theorem to $v_f$ and
  $v$ shows that $v_f = v$.
\end{proof}

To close out \S\ref{Sdiskoids}, we prove several results linking Mac Lane
valuations evaluated at a polynomial to the valuation of that polynomial at a
particular point.

\begin{defn}[{\cite[Definition 4.4, Lemma 4.24]{Ruth}}]\label{Dmaclanereciprocal}
If $v = [v_0,\, v_1(\phi_1) = \lambda_1, \ldots,\, v_n(\phi_n) =
\lambda_n]$ is a Mac Lane valuation and $f \in K[x]$, then a
\emph{$v$-reciprocal} of $f$ is a polynomial $f' \in K[x]$ such that $v(ff'
- 1) > 0$ and $v(f') = v_{n-1}(f') = -v(f)$.  
\end{defn}

By \cite[Lemma 9.1]{MacLane} (or \cite[Lemma 4.24]{Ruth}), any $f \in K[x]$ with $v(f) = v_{n-1}(f)$
has a $v$-reciprocal.  In this case, it is clear from Definition \ref{Dmaclanereciprocal} that $f$ and $f'$
being $v$-reciprocals is a symmetric relation.

\begin{prop}\label{Pvalsame}
Suppose $v = [v_0,\, v_1(\phi_1) = \lambda_1, \ldots,\, v_n(\phi_n) =
\lambda_n]$ is a Mac Lane valuation, $\alpha \in D(\phi_n,
\lambda_n)$, and $g \in K[x]$ such that $v(g) = v_{n-1}(g)$.  Then $\nu_K(g(\alpha)) = v(g)$.  
\end{prop}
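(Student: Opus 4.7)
The plan is to combine the ``easy'' lower bound $\nu_K(g(\alpha)) \geq v(g)$, coming directly from Proposition~\ref{Pvaldiskoid} and the fact that $\alpha \in D_v = D(\phi_n,\lambda_n)$, with a matching upper bound obtained by exploiting a $v$-reciprocal of $g$.

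First, because the hypothesis is exactly $v(g) = v_{n-1}(g)$, the cited form of \cite[Lemma 4.24]{Ruth} (the sentence immediately after Definition~\ref{Dmaclanereciprocal}) produces a polynomial $g' \in K[x]$ that is a $v$-reciprocal of $g$. Unpacking Definition~\ref{Dmaclanereciprocal}, this $g'$ satisfies $v(gg' - 1) > 0$ and $v(g') = v_{n-1}(g') = -v(g)$.

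Next, I apply Proposition~\ref{Pvaldiskoid} twice. Since $\alpha \in D_v$, we have
\[
\nu_K(g'(\alpha)) \;\geq\; v(g') \;=\; -v(g), \qquad \nu_K\!\bigl(g(\alpha)g'(\alpha) - 1\bigr) \;\geq\; v(gg' - 1) \;>\; 0.
\]
The second inequality forces $\nu_K(g(\alpha)g'(\alpha)) = 0$, so $\nu_K(g(\alpha)) = -\nu_K(g'(\alpha)) \leq v(g)$. Coupled with the reverse inequality $\nu_K(g(\alpha)) \geq v(g)$, also from Proposition~\ref{Pvaldiskoid}, this yields the desired equality.

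There is no real obstacle here: once one recognizes that the hypothesis $v(g) = v_{n-1}(g)$ is precisely the condition that unlocks the existence of a $v$-reciprocal, the remainder is a short chase of ultrametric inequalities. The only subtle point is the symmetric role of $g$ and $g'$---one uses the infimum definition of $v$ (via diskoids) on $g'$ rather than on $g$, which is what converts the trivially available lower bound into the less obvious upper bound.
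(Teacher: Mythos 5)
Your proof is correct, and it takes a genuinely more direct route than the paper's. The paper also invokes the $v$-reciprocal $g'$ (using the hypothesis $v(g)=v_{n-1}(g)$ in exactly the same way you do), but it argues by contradiction: it introduces the auxiliary diskoid $D' = D(g,\nu_K(g(\alpha)))$ with associated valuation $v'$, uses the fact that two diskoids sharing the point $\alpha$ are nested (\cite[Lemma~4.44]{Ruth}) together with Proposition~\ref{Pvaldiskoid} to conclude $v \preceq v'$ under the assumption $\nu_K(g(\alpha)) > v(g)$, and then derives the contradiction $v'(g') < v(g')$ from the reciprocal relation. You instead never leave the original diskoid: you apply the infimum characterization of Proposition~\ref{Pvaldiskoid} to the two polynomials $g'$ and $gg'-1$ at the single point $\alpha$, and the ultrametric identity $\nu_K(g(\alpha)g'(\alpha))=0$ converts the lower bound on $\nu_K(g'(\alpha))$ into the needed upper bound on $\nu_K(g(\alpha))$. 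This buys you a shorter, contradiction-free argument that avoids the comparability lemma entirely, and as a side benefit it sidesteps the paper's mild abuse in forming $D(g,\cdot)$ for a $g$ that need not be monic irreducible (the paper's Definition of diskoid is stated only for monic irreducible centers, so the paper implicitly leans on the more general statement in \cite{Ruth}); the paper's approach, for its part, makes the geometric picture of nested diskoids explicit, which is the viewpoint reused elsewhere in \S\ref{Sdiskoids}. Note also that your argument silently rules out $g(\alpha)=0$, since $\nu_K(g(\alpha)g'(\alpha))=0$ is incompatible with $\nu_K(g(\alpha))=\infty$, so there is no degenerate case to worry about.
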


\begin{proof}
Let $D \colonequals D(\phi_n, \lambda_n)$ be the diskoid corresponding to $v$ and let $D' \colonequals D(g, \nu_K(g(\alpha)))$ with corresponding valuation $v'$.
These two diskoids share the common element
$\alpha$.  By \cite[Lemma 4.44]{Ruth}, either $D \subseteq D'$ or $D'
\subseteq D$, and then Proposition \ref{Pvaldiskoid} shows that
either $v' \preceq v$ or $v \preceq v'$.

Since $\alpha \in D$, by Proposition \ref{Pvaldiskoid} we have
$\nu_K(g(\alpha)) \geq v(g)$.
Suppose $\nu_K(g(\alpha)) > v(g)$.
Since $v'(g) = \nu_K(g(\alpha))$ by definition, we have $v(g) <
v'(g)$. Since either $v' \preceq v$ or $v \preceq v'$, it follows that $v \preceq v'$. 
Let $g' \in K[x]$ be a $v$-reciprocal of $g$, i.e., $gg' = 1 +
h$ with $v(h) > 0$ ($g'$ exists because $v(g) = v_{n-1}(g)$).  Since
$v \preceq v'$, we have $0 < v(h) \leq v'(h)$.  In particular,
$v'(gg') = v(gg') = 0$, so $v'(g') = -v'(g) < -v(g) = v(g')$.  But this
contradicts $v \preceq v'$.
\end{proof}

\begin{corollary}\label{Cpseudoeval}
If $f$ is a key polynomial over $v = [v_0,\, v_1(\phi_1) = \lambda_1,
\ldots,\, v_n(\phi_n) = \lambda_n]$ with root $\alpha \in \ol{K}$,
then $\nu_K(g(\alpha)) = v(g)$ for all $g \in \mc{O}_K[x]$ of degree less
than $\deg(f)$.  In particular, $\nu_K(\phi_i(\alpha))
= \lambda_i$ for all $1 \leq i \leq n$.
\end{corollary}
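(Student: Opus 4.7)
The plan is to lift everything up one level of the Mac Lane tower, so that Proposition~\ref{Pvalsame} (which requires a hypothesis of the form $v(g) = v_{n-1}(g)$) can be applied to the polynomials $g$ we care about. Since $f$ is a key polynomial over $v$, we may pick any $\lambda > v(f)$ and form the augmented Mac Lane valuation
\[
v' = [v_0,\, v_1(\phi_1)=\lambda_1,\,\ldots,\, v_n(\phi_n)=\lambda_n,\, v'(f) = \lambda].
\]
With respect to $v'$, the last key polynomial is $f$, and $v$ is precisely the truncation of $v'$ one step below the top.

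Next, I would identify the diskoid of $v'$. By Proposition~\ref{Pvaldiskoid}, $D_{v'} = D(f, \lambda)$, and because $\alpha$ is a root of $f$ we have $\nu_K(f(\alpha)) = \infty \geq \lambda$, so $\alpha \in D_{v'}$. Now take any $g \in \O_K[x]$ with $\deg(g) < \deg(f)$. Its $f$-adic expansion is just $g$ itself, so the defining formula (\ref{E:augval}) for the augmented valuation gives $v'(g) = v(g)$. In the notation of Proposition~\ref{Pvalsame} applied to $v'$ (whose penultimate truncation is $v$), this reads $v'(g) = v(g)$, which is exactly the hypothesis needed. That proposition then yields $\nu_K(g(\alpha)) = v'(g) = v(g)$, which is the main assertion.

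For the ``in particular'' clause, apply this to $g = \phi_i$ for $i \le n$. Since the degrees of the $\phi_j$ are strictly increasing and $f$ is a (proper) key polynomial over $v$, we have $\deg(\phi_i) \le \deg(\phi_n) < \deg(f)$, so $\phi_i$ is eligible for the main statement, giving $\nu_K(\phi_i(\alpha)) = v(\phi_i) = \lambda_i$.

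The only real subtlety is the very first step: one must check that $f$ being a key polynomial over $v$ genuinely permits the augmentation $v'$, so that $v'$ is itself a Mac Lane valuation with $v$ as its immediate truncation. This is built into the definition of ``key polynomial'' recalled in \S\ref{Sbasicmaclane} (see also \cite[Theorems~4.2,~5.1]{MacLane} and \cite[Lemmas~4.11,~4.17]{Ruth}), so there is no real obstacle; once that is in hand, the argument is a mechanical reduction to Proposition~\ref{Pvalsame}.
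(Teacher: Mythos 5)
Your proposal is correct and is essentially the paper's own argument: the paper likewise augments $v$ to $w_f = [v_0,\ldots,v_n(\phi_n)=\lambda_n,\, v_{n+1}(f)=\lambda_{n+1}]$ with $\lambda_{n+1}$ large, notes that $v_{n+1}(g)=v_n(g)$ for $\deg(g)<\deg(f)$ and that $\alpha\in D(f,\lambda_{n+1})$, and then invokes Proposition~\ref{Pvalsame}. You simply spell out the diskoid membership and the $f$-adic expansion step in more detail, which is fine.
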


\begin{proof}
Consider a Mac Lane valuation 
$w_f = [v_0,\, v_1(\phi_1) = \lambda_1,\, \ldots,\, v_n(\phi_n) =
  \lambda_n,\, v_{n+1}(f) = \lambda_{n+1}]$, with $\lambda_{n+1}$
  large.  Then $v_{n+1}(g) = v_n(g)$ and $\alpha
  \in D(f, \lambda_{n+1})$, so the corollary follows from Proposition \ref{Pvalsame}. 
\end{proof}

%
\subsection{Ramification of Mac Lane valuations}\label{Salgclosed}

For \S\ref{Salgclosed}, we assume that the residue field $k$ of $K$ is \emph{algebraically closed}.

If $v$ and $w$ are two Mac Lane valuations such that the value group $\Gamma_w$
contains the value group $\Gamma_{v}$, we write $e(w/v)$ for the
ramification index $[\Gamma_w : \Gamma_v]$. 

\begin{remark}\label{Rrelram}
Observe that if $[v_0,\, v_1(\phi_1) = \lambda_1, \ldots,\, v_n(\phi_n) =
\lambda_n]$ is a Mac Lane valuation, where each $\lambda_i = b_i/c_i$
in lowest terms, then the ramification index $e(v_n/v_0)$ equals $\lcm(c_1, \ldots, c_n)$.
Consequently, $e(v_i/v_j) = \lcm(c_1, \ldots, c_i)/\lcm(c_1,\ldots,
c_j)$ for $i \geq j$. 
\end{remark}

\begin{lemma}\label{Lfdegree}
Suppose $f$ is a proper key polynomial over $v = [v_0,\, v_1(\phi_1) = \lambda_1,
\ldots,\, v_n(\phi_n) = \lambda_n]$.
\begin{enumerate}[\upshape (i)]
\item If $n = 0$, then $f$ is linear.  If $n \geq 1$, then $\phi_1$ is
  linear.  Every monic linear polynomial in $\mc{O}_K[x]$ is a key polynomial over $v_0$.
\item If $n \geq 1$, then $\deg(f)/\deg(\phi_n) = e(v_n/v_{n-1})$.
\end{enumerate}
\end{lemma}

\begin{proof}
Part (i) follows from \cite[Remark~5.2(i)]{ObusWewers} for $n=0$, and
then for general $n \geq 1$ by applying the $n=0$ case to $\phi_1$ and $v_0$.
Part (ii) follows from \cite[Theorem~12.1]{MacLane} (one can also use the second equation of \cite[Lemma~4.30]{Ruth},
where $\FF_m = \FF_{m-1} = k$, but note that \cite[Lemma~4.30]{Ruth} is
incorrect as stated --- the expression $e(v_m/v_{m-1})$ should be
replaced by $e(v_{m-1}/v_{m-2})$).
\end{proof}

\begin{remark}
  The assumption $k$ algebraically closed is required above to apply \cite[Remark~5.2(i)]{ObusWewers} and to assume $\FF_m = \FF_{m-1} = k$ in \cite[Lemma~4.30]{Ruth}.
\end{remark}

\begin{corollary}\label{CvalueofN}
Let $v = [v_0,\, v_1(\phi_1) = \lambda_1,
\ldots,\, v_n(\phi_n) = \lambda_n]$ be a Mac Lane valuation of
inductive valuation length $n \geq 1$.  Write
$\lambda_i = b_i/c_i$ in lowest terms for all $i$.  
Let $N_n = \lcm_{i < n} c_i$ if $n > 1$, and
let $N_n= 1$ if $n  =1$.  Then $N_n = e(v_{n-1}/v_0) = \deg(\phi_n)$,
and thus $\Gamma_{v_{n-1}} = (1/N_n)\ints = (1/\deg(\phi_n))\ints$.
\end{corollary}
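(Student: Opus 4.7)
The statement bundles three equalities: $N_n = e(v_{n-1}/v_0)$, $e(v_{n-1}/v_0) = \deg(\phi_n)$, and the corresponding description of the value group $\Gamma_{v_{n-1}}$. The plan is to prove the first directly from Remark~\ref{Rrelram}, the second by induction on $n$ using Lemma~\ref{Lfdegree}, and then combine them to read off the value group.

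First I would dispense with $N_n = e(v_{n-1}/v_0)$. When $n = 1$, by convention $N_1 = 1$ and $v_{n-1} = v_0$, so $e(v_0/v_0) = 1$. When $n > 1$, Remark~\ref{Rrelram} gives $e(v_{n-1}/v_0) = \lcm(c_1,\ldots,c_{n-1})$, which is exactly $N_n$ by definition.

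Next, I would show $\deg(\phi_n) = e(v_{n-1}/v_0)$ by induction on $n$. For the base case $n = 1$, Lemma~\ref{Lfdegree}(i) says $\phi_1$ is linear, so $\deg(\phi_1) = 1 = e(v_0/v_0)$. For the inductive step with $n \geq 2$: since $\phi_n$ is a proper key polynomial over $v_{n-1}$, Lemma~\ref{Lfdegree}(iii) gives $\deg(\phi_n) = e(v_{n-1}/v_{n-2}) \cdot \deg(\phi_{n-1})$. By the inductive hypothesis $\deg(\phi_{n-1}) = e(v_{n-2}/v_0)$, and by multiplicativity of ramification indices in the tower $v_0 \preceq v_{n-2} \preceq v_{n-1}$, we obtain $\deg(\phi_n) = e(v_{n-1}/v_{n-2}) \cdot e(v_{n-2}/v_0) = e(v_{n-1}/v_0)$, completing the induction.

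Finally, since $\Gamma_{v_0} = \Z$ and $[\Gamma_{v_{n-1}} : \Gamma_{v_0}] = e(v_{n-1}/v_0) = N_n$, and since $\Gamma_{v_{n-1}}$ is a subgroup of $\Q$ containing $\Z$ of index $N_n$, it must equal $(1/N_n)\Z = (1/\deg(\phi_n))\Z$. There is no real obstacle here — the proof is essentially an induction whose base case is the linearity of $\phi_1$ and whose inductive step repackages Lemma~\ref{Lfdegree}(iii) together with the multiplicativity of ramification indices; the main thing to be careful about is keeping the $n=1$ convention consistent with the $n > 1$ formula.
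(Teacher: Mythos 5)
Your proposal is correct and follows essentially the same route as the paper: the base case $\deg(\phi_1)=1$ from Lemma~\ref{Lfdegree}(i), the inductive step from Lemma~\ref{Lfdegree}(iii) combined with the lcm/ramification identity of Remark~\ref{Rrelram}, and the value-group statement read off from $\Gamma_{v_0}=\ints$ and the index $e(v_{n-1}/v_0)=N_n$. The only difference is organizational (you prove $N_n=e(v_{n-1}/v_0)$ and $e(v_{n-1}/v_0)=\deg(\phi_n)$ separately rather than in one induction), which is immaterial.
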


\begin{proof}
That $\deg(\phi_1) = 1$ is Lemma \ref{Lfdegree}(i), which
proves the corollary when $n = 1$.  By Remark~\ref{Rrelram}, $e(v_{j+1}/v_j)\lcm(c_1, \ldots, c_j) = \lcm(c_1, \ldots,
c_{j+1})$. The rest of the corollary follows from Lemma \ref{Lfdegree}(ii) and induction. 
\end{proof}

\begin{lemma}\label{Lvnvprime}
Let $[v_0,\, v_1(\phi_1) = \lambda_1,\ldots,\, v_n(\phi_n) =
\lambda_n]$ be a valuation over which $f$ is a proper key polynomial. Then for $1 \leq i \leq n$, we have
$\lambda_i \notin \Gamma_{v_{i-1}} = (1/N_i)\ints$.
\end{lemma}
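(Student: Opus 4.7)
The plan is to show that for each $1 \leq i \leq n$, we must have $e(v_i/v_{i-1}) > 1$; once this is established, the conclusion $\lambda_i \notin \Gamma_{v_{i-1}}$ is immediate, since adjoining a value $\lambda_i$ already in $\Gamma_{v_{i-1}}$ to form $v_i = [v_{i-1}, v_i(\phi_i) = \lambda_i]$ would not enlarge the value group. The identification $\Gamma_{v_{i-1}} = (1/N_i)\ints$ in the statement is then just Corollary~\ref{CvalueofN}.

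To show $e(v_i/v_{i-1}) > 1$, I would exhibit, for each $i$, a proper key polynomial over $v_i$ and then apply Lemma~\ref{Lfdegree}(iii). The case split is natural. For $i < n$, by the standing convention each $\phi_{i+1}$ is a proper key polynomial over $v_i$, so Lemma~\ref{Lfdegree}(iii) gives
\[
\frac{\deg(\phi_{i+1})}{\deg(\phi_i)} = e(v_i/v_{i-1}).
\]
Since we have assumed throughout the paper that the degrees of the $\phi_j$ are strictly increasing, the left-hand side exceeds $1$, so $e(v_i/v_{i-1}) > 1$.

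For the remaining case $i = n$, the hypothesis is exactly that the given $f$ is a proper key polynomial over $v_n$, so $\deg(f) > \deg(\phi_n)$. Applying Lemma~\ref{Lfdegree}(iii) one more time gives
\[
\frac{\deg(f)}{\deg(\phi_n)} = e(v_n/v_{n-1}) > 1,
\]
which handles the case $i = n$ and completes the argument.

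There is essentially no obstacle here beyond correctly setting up the case split: the heavy lifting is done by Lemma~\ref{Lfdegree}(iii) together with the strictly-increasing-degree convention, and Corollary~\ref{CvalueofN} supplies the explicit form of $\Gamma_{v_{i-1}}$ stated in the lemma. I would also note (though not use) the equivalent reformulation: writing $\lambda_i = b_i/c_i$ in lowest terms, the conclusion says $c_i \nmid N_i = \lcm(c_1, \ldots, c_{i-1})$, which is the concrete number-theoretic statement that the above argument proves.
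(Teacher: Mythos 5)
Your proof is correct and is essentially the paper's own argument: both reduce to showing $e(v_i/v_{i-1}) > 1$ (the paper states the contrapositive, that $\lambda_i \in \Gamma_{v_{i-1}}$ would force $e(v_i/v_{i-1}) = 1$) and then apply Lemma~\ref{Lfdegree}(iii) with the same case split, using $\phi_{i+1}$ as a proper key polynomial over $v_i$ when $i < n$ and $f$ over $v_n$ when $i = n$. No substantive difference.
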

\begin{proof}
If $\lambda_i \in \Gamma_{v_{i-1}}$, then $e(v_i/v_{i-1}) = 1$.  If $i =
n$, applying Lemma~\ref{Lfdegree}(ii) to $v_n$, contradicts the fact that $\deg(f) >
\deg(\phi_n)$.  For $i < n$, applying Lemma~\ref{Lfdegree}(ii) to
$v_i$ contradicts the fact that $\deg(\phi_{i+1}) > \deg(\phi_i)$.
\end{proof}

\section{Mac Lane valuations, normal models and regular resolutions}\label{Smaclanemodels}
  In \S\ref{Shorizontaldivs}, we prove results on the specialization of
  horizontal divisors, expressed in terms of Mac Lane
  valuations.
  In \S\ref{Sresolution} we recall a result from \cite{ObusWewers},
  giving a criterion in terms of Mac Lane valuations for when a 
  model of $\proj^1_K$ is regular.  Lastly, in
  \S\ref{Scontractions}, we discuss valuations that are in a geometric
  sense ``nearby'' to a given Mac Lane valuation in a regular model of
  $\proj^1_K$.  These valuations will play a crucial role throughout
  the rest of the paper.
  
A \emph{normal model} of $\proj^1_K$ is a flat, normal, proper
$\mc{O}_K$-curve with generic fiber isomorphic to $\proj^1_K$.
By \cite[Corollary 3.18]{Ruth}, normal models $\mc{Y}$ of $\proj^1_K$ are in one-to-one correspondence with
non-empty finite collections of geometric valuations, by sending
$\mc{Y}$ to the collection of geometric valuations corresponding to
the local rings at the generic points of the irreducible components of
the special fiber of $\mc{Y}$.
Via this correspondence, the multiplicity of an irreducible
component of the special fiber of a normal model $\mc{Y}$ of $\proj^1_K$
corresponding to a Mac Lane valuation $v$ equals $e(v/v_0)$.

We say that a normal model of $\proj^1_K$ \emph{includes} a Mac Lane
valuation $v$ if a component of the special fiber corresponds to
$v$. If $\mc{Y}$ includes $v$, we call the corresponding irreducible component of its
special fiber the \emph{$v$-component} of the special fiber of $\mc{Y}$ (or simply
the $v$-component of $\mc{Y}$, even though it is not an irreducible
component of $\mc{Y}$).
If $S$ is a finite set of Mac Lane valuations, then the
\emph{$S$-model of $\proj^1_K$} is the normal model including exactly
the valuations in $S$.  If $S = \{v\}$, we simply say the $v$-model
instead of the $\{v\}$-model.  
Recall that we fixed a coordinate $x$ on $\proj^1_K$, that is, a rational function $x$ on $\proj^1_K$ such that
$K(\proj^1_K) = K(x)$.

For the remainder of \S\ref{Smaclanemodels}, we assume the residue field $k$ of $K$ is \emph{algebraically closed}, but the statements above about the correspondence between normal models and collections of geometric valuations are true without this assumption.  

\subsection{Specialization of horizontal divisors}\label{Shorizontaldivs}
Each $\alpha \in \ol{K} \cup \{\infty\}$ corresponds to a point of
$\proj^1(\ol{K})$ given by $x = \alpha$, which lies over a unique
closed point of $\proj^1_K$.  If
$\mc{Y}$ is a normal model of $\proj^1_K$, the closure of this point in
$\mc{Y}$ is a subscheme that we call $D_{\alpha}$; note that $D_{\alpha}$ is a horizontal divisor (the model will be clear from context, so we omit it to lighten the notation).

If $v$ is a Mac Lane valuation, then the reduced special fiber of the
$v$-model of $\proj^1_K$ is isomorphic to $\proj^1_k$ (see, e.g.,
\cite[Lemma~7.1]{ObusWewers}).  It will be useful to have
an explicit coordinate on this special fiber (that is, a rational function $y$
such that the function field of the special fiber is $k(y)$). 
\begin{lemma}\label{LA1coordinate}
Let $v = [v_0,\, v_1(\phi_1) = \lambda_1,\ldots,\, v_n(\phi_n) =
\lambda_n]$ be a Mac Lane valuation, and let $e = e(v_n/v_{n-1})$.
There exists a monomial $t$ in $\phi_1,\ldots,
\phi_{n-1}$ such that $v(t \phi_n^e) = 0$, and for any such $t$, the restriction of
$t\phi_n^e$ to the reduced special fiber of the $v$-model of
$\proj^1_K$ is a coordinate on the $v$-component that vanishes at the specialization of $\phi_n = 0$.
\end{lemma}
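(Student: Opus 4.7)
The lemma has two assertions: the existence of a monomial $t$ with $v(t\phi_n^e) = 0$, and the verification that any such $t$ makes $\overline{t\phi_n^e}$ a coordinate on the $v$-component that vanishes at the specialization of $\phi_n = 0$. My plan is to dispatch existence and vanishing first, then invoke Mac Lane's residue-field structure theorem for the coordinate claim, which I expect to be the main obstacle.

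\emph{Existence and vanishing.} By the definition of $e = e(v_n/v_{n-1})$, we have $e\lambda_n \in \Gamma_{v_{n-1}}$. Corollary~\ref{CvalueofN} together with induction on $n$ shows $\Gamma_{v_{n-1}}$ is generated as an abelian group by $v(\pi_K) = 1$ and $v(\phi_i) = \lambda_i$ for $1 \le i \le n-1$. Writing $-e\lambda_n = a_0 + \sum_{i=1}^{n-1} a_i \lambda_i$ with $a_i \in \Z$, the Laurent monomial $t = \pi_K^{a_0}\phi_1^{a_1}\cdots\phi_{n-1}^{a_{n-1}}$ satisfies $v(t\phi_n^e) = 0$ by additivity of $v$. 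For the vanishing, let $\alpha \in \ol{K}$ be a root of $\phi_n$. Because $\phi_n$ is an irreducible key polynomial with $\deg(\phi_n) > \deg(\phi_i)$ for $i<n$, the minimal polynomial of $\alpha$ is $\phi_n$, so $\phi_i(\alpha) \ne 0$ for $i<n$ and $t(\alpha) \in \ol{K}^\times$ is well-defined, while $(t\phi_n^e)(\alpha) = 0$. Since $v(t\phi_n^e) = 0$, the function $t\phi_n^e$ has a well-defined, nonzero reduction on the $v$-component, and compatibility of specialization with evaluation on the normal $v$-model forces this reduction to vanish at the specialization of $D_\alpha$.

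\emph{Coordinate property.} The main obstacle is to show that $\overline{t\phi_n^e}$ generates the function field of the $v$-component, which is $\proj^1_k$ by \cite[Lemma~7.1]{ObusWewers}. I would invoke Mac Lane's residue-field computation for augmented valuations, as reformulated in the graded-ring language of \cite[\S4]{Ruth} (cf.\ \cite[Theorem~12.1]{MacLane}): when $k$ is algebraically closed, the residue field of a Mac Lane valuation $v_n$ is $k(y)$, where $y$ is the image of any valuation-$0$ element of the form $s\phi_n^{e(v_n/v_{n-1})}$ with $s$ a monomial in $\pi_K, \phi_1, \ldots, \phi_{n-1}$. Applied to $s = t$, this gives the generator claim. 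The independence of the choice of $t$ then reduces to the assertion that for any Laurent monomial $u = \pi_K^{b_0}\phi_1^{b_1}\cdots\phi_{n-1}^{b_{n-1}}$ with $v(u) = 0$, the reduction $\bar u$ lies in $k^\times$; this follows by applying the same structure theorem inductively to the truncations $v_1, \ldots, v_{n-1}$, since over $k = \bar k$ every such balanced expression in earlier $\phi_i$'s reduces to a nonzero scalar. Care is needed at this inductive step because the residue fields of the truncations $v_i$ are not literally nested inside $\bar K_{v_n}$, so the induction must be framed in terms of the graded algebra rather than a tower of field extensions.
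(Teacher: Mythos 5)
Your proposal is correct and follows essentially the same route as the paper: the core input in both is Mac Lane's residue-field structure theorem (\cite[Theorem~12.1]{MacLane}, \cite[Lemma~4.29]{Ruth}), which says the image of $t\phi_n^e$ generates the residue ring $k[y]$ for any admissible $t$, combined with the value-group observation that $e\lambda_n \in \Gamma_{v_{n-1}}$ to produce a monomial $t$. Your extra details (the explicit generation of $\Gamma_{v_{n-1}}$ by $1, \lambda_1, \ldots, \lambda_{n-1}$, the evaluation at a root of $\phi_n$ for the vanishing claim, and the reduction of balanced monomials to scalars) only make explicit what the paper leaves implicit.
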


\begin{proof}
  Let $\mc{O} \subseteq K[x]$ be the subring of elements $f$ such that
$v(f) \geq 0$, and let $\mc{O}^+$ be the ideal of elements $g$ where
$v(g) > 0$.  Let $e = e(v_n/v_{n-1})$.
By \cite[Theorem 12.1]{MacLane} (or \cite[Lemma 4.29]{Ruth} and the discussion before that 
lemma), $\mc{O}/\mc{O}^+ \cong k[y]$, where $y$ is the image of $t\phi_n^e$ in
$\mc{O}/\mc{O}^+$, for any $t \in K[x]$ with $v(t\phi_n^e) =
0$ and $v(t) = v_{n-1}(t)$ (in the notation of \cite{Ruth}, the
example used is $t =
(S')^{\ell}$).  Since $v(\phi_n^e) \in
\Gamma_{v_{n-1}}$, we can take $t$ to be a monomial in $\phi_1,
\ldots, \phi_{n-1}$.  Since
$\Spec \mc{O}$ is an affine open of the $v$-model with reduced special
fiber $\Spec \mc{O}/\mc{O}^+ \cong \Spec k[y] \cong \aff^1_k \subseteq
\proj^1_k$, we have that $y$ is a coordinate on the
reduced special fiber of the $v$-model of $\proj^1_K$.  
\end{proof} 

\begin{prop}\label{Psmallerdiskoid}
Let $v = [v_0,\, v_1(\phi_1) = \lambda_1, \ldots,\, v_n(\phi_n) =
\lambda_n]$ be a Mac Lane valuation and let $\mc{Y}$ be the 
$v$-model of $\proj^1_K$.  As $\alpha$ ranges over $\ol{K}$, all
$D_{\alpha}$ with $\nu_K(\phi_n(\alpha)) > \lambda_n$ meet on the special
fiber, all $D_{\alpha}$ with $\nu_K(\phi_n(\alpha)) < \lambda_n$ meet
at a different point
on the special fiber, and no $D_{\alpha}$ with $\nu_K(\phi_n(\alpha))
\neq \lambda_n$ meets any $D_{\beta}$ with $\nu_K(\phi_n(\beta)) = \lambda_n$.
\end{prop}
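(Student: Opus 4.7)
The plan is to use the explicit coordinate on the $v$-component supplied by Lemma~\ref{LA1coordinate}. Fix a Laurent monomial $t$ in $\phi_1,\ldots,\phi_{n-1}$ (possibly with a $\pi_K^a$ factor) such that $v(t\phi_n^e) = 0$, and set $y \colonequals t\phi_n^e$. Let $\mc{O} \colonequals \{f \in K[x] : v(f) \geq 0\}$, so that $\Spec \mc{O}$ is an affine open of the $v$-model whose reduced special fiber is $\aff^1_k = \Spec k[y]$; the $v$-component $\cong \proj^1_k$ is obtained by adjoining the point $y = \infty$. A horizontal divisor $D_\alpha$ specializes into $\Spec \mc{O}$ if and only if $\nu_K(f(\alpha)) \geq 0$ for every $f \in \mc{O}$, and in that case its specialization is determined by the reduction of $y(\alpha)$ modulo the maximal ideal of $\mc{O}_{\ol{K}}$.

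For $\alpha \in D_v \colonequals D(\phi_n,\lambda_n)$, I apply Proposition~\ref{Pvalsame} to each $\phi_i$ with $i < n$ (noting $v(\phi_i) = v_{n-1}(\phi_i) = \lambda_i$) to obtain $\nu_K(\phi_i(\alpha)) = \lambda_i$. Combining these factor by factor gives $\nu_K(t(\alpha)) = v(t) = -e\lambda_n$, and hence
\[
\nu_K(y(\alpha)) \;=\; e\bigl(\nu_K(\phi_n(\alpha)) - \lambda_n\bigr).
\]
When $\nu_K(\phi_n(\alpha)) > \lambda_n$, this valuation is positive, so $D_\alpha$ specializes to $y = 0$; when $\nu_K(\phi_n(\alpha)) = \lambda_n$, it is zero, so $D_\alpha$ specializes to a nonzero point of $\aff^1_k$.

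For $\alpha \notin D_v$, Proposition~\ref{Pvalsame} no longer applies, and this is the main obstacle. Instead, I invoke Proposition~\ref{Pvaldiskoid} to produce some $g \in K[x]$ with $\nu_K(g(\alpha)) < v(g)$; then the function $f \colonequals g^N/\pi_K^{Nv(g)} \in K[x]$, for $N$ chosen so that $Nv(g) \in \Z$, satisfies $v(f) = 0$ (so $f \in \mc{O}$) but $\nu_K(f(\alpha)) < 0$. Hence $D_\alpha$ does not specialize into $\Spec \mc{O}$, and since the $v$-component is the entire special fiber, $D_\alpha$ must specialize to the remaining point $y = \infty$. Assembling the three cases yields the three pairwise disjoint specialization loci --- $y = 0$, points of $k^\times \subset \aff^1_k$, and $y = \infty$ --- asserted by the proposition.
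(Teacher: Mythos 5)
Your proposal is correct and follows essentially the same route as the paper: specialize via the coordinate $y = t\phi_n^e$ of Lemma~\ref{LA1coordinate}, compute $\nu_K(y(\alpha))$ for $\alpha \in D(\phi_n,\lambda_n)$ to separate the loci $y=0$ and $y \in k^\times$, and for $\alpha \notin D(\phi_n,\lambda_n)$ exhibit an element of $\mc{O}$ with negative value at $\alpha$ to force specialization to $y=\infty$. The only cosmetic differences are that you justify $\nu_K(t(\alpha)) = v(t)$ explicitly via Proposition~\ref{Pvalsame} and use the diskoid characterization of Proposition~\ref{Pvaldiskoid} abstractly where the paper writes down the specific function $\phi_n^b/\pi_K^{bv(\phi_n)}$.
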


\begin{proof}
Let $\mc{Y}$ be the $v$-model of $\proj^1_K$. Using the coordinate $y \colonequals t\phi_n^e$ from Lemma~\ref{LA1coordinate} on the reduced special fiber of $\mc{Y}$, we will show that 
  all $\alpha \in \ol{K}$ with $\nu_K(\phi_n(\alpha)) < \lambda_n$
  specialize to $y=\infty$, all $\alpha \in \ol{K}$ with
  $\nu_K(\phi_n(\alpha)) > \lambda_n$
  specialize to $y=0$ and all $\alpha \in \ol{K}$ with
  $\nu_K(\phi_n(\alpha)) = \lambda_n$
  specialize to some point $y = a$ with $a \notin \{0,\infty\}$. We now work out the details.

Let $\mc{O} \subseteq K[x]$ be the subring of elements $f$ such that
$v(f) \geq 0$, and let $\mc{O}^+$ be the ideal of elements $g$ where
$v(g) > 0$. Suppose $\alpha \in D(\phi_n, \lambda_n)$.  Proposition~\ref{Pvaldiskoid} shows that $\nu_K(g(\alpha)) > 0$ for $g
\in \mc{O}^+$, thus evaluating $y$ at
$\alpha$ gives a well-defined element of $k$. Furthermore, $y=y(\alpha)$ is precisely the point where $D_{\alpha}$ meets the special fiber of $\mc{Y}$. We now compute:
\begin{align*}
y(\alpha) = 0 &\Leftrightarrow \nu_K(t(\alpha)\phi_n(\alpha)^e) > 0
  \\
&\Leftrightarrow \nu_K(t(\alpha)\phi_n(\alpha)^e) >
                  v(t\phi_n^e) \\
&\Leftrightarrow \nu_K(\phi_n(\alpha)) > \lambda_n \quad \quad \quad \quad \quad \quad \quad (\because \ \nu_K(t(\alpha)) = v(t)).
\end{align*}
This shows that all $D_{\alpha}$ for which $\nu_K(\phi_n(\alpha)) > \lambda_n$ intersect on the special
fiber at the point $y=0$, but none of them intersect any $D_{\beta}$ for which $\nu_K(\phi_n(\beta)) =
\lambda_n$.  All such $D_{\beta}$ intersect the reduced special fiber $\aff^1_k \cong \Spec k[y]$ of $\Spec \mc{O}$ at some point where $y \neq 0$.

Now let $\alpha \notin D(\phi_n, \lambda_n)$. We will show that
$D_{\alpha} \cap (\Spec \mc{O})_s$ is empty 
by contradiction. Suppose not. Let $P \in D_{\alpha} \cap (\Spec
\mc{O})_s$ be a closed point of $\Spec \O$.  We have a well-defined
element $g(P) \in k$ for every $g \in \mc{O}$ coming from evaluating
$g$ at $P$. Since $P$ is the closed point of $D_{\alpha} \cong \Spec A$ with $A \subseteq \mc{O}_{K(\alpha)}$, it follows that $g(\alpha) \in \O_{K(\alpha)}$ and furthermore, $g(P) = g(\alpha) \mod \mathfrak{m}_{\O_{K(\alpha)}}$. We will now construct a $g \in \mc{O}$ with $\nu_K(g(\alpha)) < 0$, which is a contradiction. Let $b$ be such that $bv(\phi_n) \in \ints_{> 0}$, and let $g \colonequals
\phi_n^b/\pi_K^{bv(\phi_n)}$.  Then $v(g) = 0$ so $g \in \mc{O}$, but
 \[ \nu_K(g(\alpha)) = b(\nu_K(\phi_n(\alpha)) - v(\phi_n)) < 0.  \]
 
 Thus $D_{\alpha}$ does not intersect the special fiber of $\Spec \mc{O}$,
so $D_{\alpha}$ specializes to a point of $\mc{Y}_s \setminus (\Spec\O)_s$, which is the ``point at infinity'' where $y=\infty$ on the reduced special fiber of $\mc{Y}$.  This finishes the proof.
\end{proof}

\begin{corollary}\label{Cdontmeet}
Let $v = [v_0,\, v_1(\phi_1) = \lambda_1, \ldots,\, v_n(\phi_n) =
\lambda_n]$ be a Mac Lane valuation and let $\mc{Y}$ be a normal model
of $\proj^1_K$ including $v$.   If $\alpha,\beta \in \ol{K}$ are such that $\nu_K(\phi_n(\beta))
\leq \lambda_n \leq \nu_K(\phi_n(\alpha))$ and $\nu_K(\phi_n(\beta))
\neq \nu_K(\phi_n(\alpha))$, then $D_{\alpha}$ and
$D_{\beta}$ do not meet on the special fiber of $\mc{Y}$.
\end{corollary}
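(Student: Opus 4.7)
The plan is to reduce to the case $\mc{Y} = \mc{Y}_v$, which is exactly Proposition~\ref{Psmallerdiskoid}. By \cite[Corollary 3.18]{Ruth}, a normal model of $\proj^1_K$ is determined by its finite collection of Mac Lane valuations; the $v$-model $\mc{Y}_v$ corresponds to the singleton $\{v\}$. Since the collection of valuations attached to $\mc{Y}$ contains $v$, the model $\mc{Y}$ dominates $\mc{Y}_v$ via a proper birational morphism $\pi \colon \mc{Y} \to \mc{Y}_v$ that contracts every component of the special fiber of $\mc{Y}$ other than the $v$-component. Because $\pi$ is proper and is the identity on the common generic fiber, for each $\gamma \in \ol{K}$ the horizontal divisor $D_\gamma$ on $\mc{Y}$ maps onto the corresponding horizontal divisor $D_\gamma$ on $\mc{Y}_v$.

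It therefore suffices to check that $D_\alpha$ and $D_\beta$ do not meet on the special fiber of $\mc{Y}_v$, since any intersection point $P \in D_\alpha \cap D_\beta$ on $\mc{Y}$ would map under $\pi$ to an intersection point $\pi(P) \in D_\alpha \cap D_\beta$ on $\mc{Y}_v$. The hypothesis $\nu_K(\phi_n(\beta)) \leq \lambda_n \leq \nu_K(\phi_n(\alpha))$ together with $\nu_K(\phi_n(\beta)) \neq \nu_K(\phi_n(\alpha))$ splits into three subcases: (i) both inequalities are strict, in which case Proposition~\ref{Psmallerdiskoid} forces $D_\alpha$ to specialize to the ``$y=0$'' point and $D_\beta$ to the ``$y=\infty$'' point (in the coordinate of Lemma~\ref{LA1coordinate}); (ii) $\nu_K(\phi_n(\beta)) = \lambda_n < \nu_K(\phi_n(\alpha))$; and (iii) $\nu_K(\phi_n(\beta)) < \lambda_n = \nu_K(\phi_n(\alpha))$. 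In subcases (ii) and (iii), the final clause of Proposition~\ref{Psmallerdiskoid}---no $D_\alpha$ with $\nu_K(\phi_n(\alpha)) \neq \lambda_n$ meets any $D_\beta$ with $\nu_K(\phi_n(\beta)) = \lambda_n$---applies directly.

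The only step requiring some care is the existence of the contraction $\pi$, which is immediate from R\"uth's correspondence; once that is granted, the corollary is a direct case check against Proposition~\ref{Psmallerdiskoid}, with no further computation needed.
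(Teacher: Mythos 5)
Your proof is correct and matches the paper's intent: the paper simply declares the corollary ``immediate from Proposition~\ref{Psmallerdiskoid},'' and your write-up fills in exactly the implicit reduction, namely that $\mc{Y}$ dominates the $v$-model (contract the other vertical components, using R\"uth's correspondence), so that a common specialization point of $D_\alpha$ and $D_\beta$ on $\mc{Y}$ would push forward to one on $\mc{Y}_v$, contradicting the proposition in each of the three cases. No gaps; this is essentially the same argument, just written out.
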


\begin{proof}
Immediate from Proposition \ref{Psmallerdiskoid}.
\end{proof}

\begin{corollary}\label{Cannulus}
Let $v = [v_0,\, v_1(\phi_1) = \lambda_1, \ldots,\, v_n(\phi_n) =
\lambda_n]$ and $v' = [v_0,\, v_1(\phi_1) = \lambda_1, \ldots,\, v_n'(\phi_n) =
\lambda_n']$ be Mac Lane valuations with $\lambda_n' < \lambda_n$. Let $\mc{Y}$ be 
a model of $\proj^1_K$ including $v$ and $v'$ on which the $v$- and
$v'$-components intersect, say at a point $z$.  Then $D_{\alpha}$
meets $z$ if and only if $\lambda_n' < \nu_K(\phi_n(\alpha)) < \lambda_n$.
\end{corollary}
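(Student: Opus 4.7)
My plan is to identify the image of the intersection point $z$ in each of the single-valuation models $\mc{Y}_v$ and $\mc{Y}_{v'}$, and then apply Proposition~\ref{Psmallerdiskoid} to each to characterize the divisors $D_\alpha$ that hit those images. Concretely, let $e = e(v/v_{n-1})$ and $e' = e(v'/v_{n-1})$, and use Lemma~\ref{LA1coordinate} to fix monomials $t, t'$ in $\phi_1, \dots, \phi_{n-1}$ such that $y \colonequals t\phi_n^e$ and $y' \colonequals t'\phi_n^{e'}$ restrict to coordinates on the $v$-component of $\mc{Y}_v$ and the $v'$-component of $\mc{Y}_{v'}$ respectively, each vanishing at the specialization of $\phi_n = 0$. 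The natural morphisms $\pi_v \colon \mc{Y} \to \mc{Y}_v$ and $\pi_{v'} \colon \mc{Y} \to \mc{Y}_{v'}$ restrict to isomorphisms on the corresponding components, since both are smooth projective curves birational to $\proj^1_k$.

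The crux is to show that $\pi_v(z)$ is the point $\{y = \infty\}$ and $\pi_{v'}(z)$ is the point $\{y' = 0\}$. Because $v$ and $v'$ agree on $\phi_1, \dots, \phi_{n-1}$ (they differ only in the value assigned to $\phi_n$), we have $v'(t) = v(t) = -e\lambda_n$, so
\[
v'(y) = v'(t) + e \, v'(\phi_n) = e(\lambda_n' - \lambda_n) < 0.
\]
Thus $y$, viewed as a rational function on $\mc{Y}$, has a pole along the $v'$-component. Since this component passes through $z$, the restriction of $y$ to the $v$-component of $\mc{Y}$ (which, via the isomorphism with the $v$-component of $\mc{Y}_v$, is a coordinate on $\proj^1_k$) is a non-constant rational function with a pole at $z$, forcing $\pi_v(z) = \{y = \infty\}$. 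A symmetric computation gives $v(y') = e'(\lambda_n - \lambda_n') > 0$, so $y'$ vanishes along the $v$-component of $\mc{Y}$, and the same reasoning yields $\pi_{v'}(z) = \{y' = 0\}$.

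To conclude, I would apply Proposition~\ref{Psmallerdiskoid} to $v$ and to $v'$ separately: $D_\alpha$ specializes to $\{y = \infty\}$ on $\mc{Y}_v$ if and only if $\nu_K(\phi_n(\alpha)) < \lambda_n$, and to $\{y' = 0\}$ on $\mc{Y}_{v'}$ if and only if $\nu_K(\phi_n(\alpha)) > \lambda_n'$. Pushing forward the specialization of $D_\alpha$ on $\mc{Y}$ via $\pi_v$ and $\pi_{v'}$ gives the ``only if'' direction immediately.

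The main obstacle is the ``if'' direction, which requires showing that when both inequalities hold, the specialization of $D_\alpha$ on $\mc{Y}$ is exactly $z$ and not some other point of $\pi_v^{-1}(\{y = \infty\}) \cap \pi_{v'}^{-1}(\{y' = 0\})$. The cleanest way to handle this is to first establish the claim for the $\{v, v'\}$-model, where the fiber of $\pi_v$ over $\{y = \infty\}$ consists of exactly the $v'$-component (meeting the $v$-component only at $z$) and analogously for $\pi_{v'}$, so the intersection of preimages is forced to be $\{z\}$. The general case then follows by further contracting any extra components of $\mc{Y}$ to reduce to the $\{v, v'\}$-model and using compatibility of specialization under contraction.
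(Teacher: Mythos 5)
There is a genuine gap at the heart of your argument, namely the claim that $\pi_v(z)$ is the point $\{y = \infty\}$ (and symmetrically that $\pi_{v'}(z) = \{y' = 0\}$). Your justification is: $y = t\phi_n^e$ has a pole along the $v'$-component, that component passes through $z$, hence the restriction of $y$ to the $v$-component has a pole at $z$. This inference is false in general: a rational function on a surface can have a pole along one curve through a point $z$ while having order $0$ along a second curve through $z$ and restricting to a function on that second curve which is regular (even a unit) at $z$. For instance, on $\aff^2_k$ with coordinates $u,w$, the function $(u+w)/u$ has a pole along $\{u=0\}$, has order $0$ along $\{w=0\}$, and restricts to the constant $1$ on $\{w=0\}$. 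The failure occurs when a zero-divisor of the function also passes through the point and cancels the pole. In your situation $\divi(y)$ has, besides the pole along the $v'$-component, horizontal zero components such as the $D_\beta$ for $\beta$ a root of $\phi_n$ (and of the $\phi_i$ occurring in $t$), and to rule out the possibility that one of these passes through $z$ you would need precisely the kind of specialization information that Corollary~\ref{Cannulus} is meant to provide; as written, the step is circular or at least unproved, and everything downstream (both the ``only if'' direction and the fiber argument for the ``if'' direction) depends on it.

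For comparison, the paper's proof never identifies $\pi_v(z)$ in coordinates. After reducing to the $\{v,v'\}$-model, it argues by contradiction: if $D_\alpha$ (with $\lambda_n' < \nu_K(\phi_n(\alpha)) < \lambda_n$) met a point of $\ol{Y}\setminus\ol{Y}'$, then Proposition~\ref{Psmallerdiskoid} applied to the blow-down to the $v$-model would force \emph{all} $D_\beta$ with $\beta \notin D(\phi_n,\lambda_n)$ through that same point; blowing down $\ol{Y}$ instead and choosing $\alpha_1,\alpha_2 \notin D(\phi_n,\lambda_n)$ with $\nu_K(\phi_n(\alpha_1)) = \lambda_n'$ and $\lambda_n' < \nu_K(\phi_n(\alpha_2)) < \lambda_n$ then contradicts Proposition~\ref{Psmallerdiskoid} applied to the $v'$-model; a symmetric argument excludes $\ol{Y}'\setminus\ol{Y}$, so $D_\alpha$ must pass through $z$. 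The converse is then a quick application of Corollary~\ref{Cdontmeet} against an auxiliary $\beta$ in the annulus. If you want to salvage your approach, you would need an independent argument locating $\pi_v(z)$ --- e.g., showing that no zero component of $y$ passes through $z$ --- and at that point you are essentially redoing the paper's specialization analysis.
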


\begin{proof}
We may assume $\mc{Y}$ is the $\{v, v'\}$-model $\proj^1_K$.
  Let $\ol{Y}$ and $\ol{Y}'$ be the $v$ and $v'$-components of
$\mc{Y}$, respectively, so that $z = \ol{Y} \cap \ol{Y}'$.  First suppose
$\lambda_n' < \nu_K(\phi_n(\alpha)) < \lambda_n$. If $D_{\alpha}$
meets a point of $\ol{Y} \setminus \ol{Y}'$, then by
Proposition~\ref{Psmallerdiskoid} applied to the blow down of
$\ol{Y}' \subseteq \mc{Y}$ (i.e., the $v$-model of $\proj^1_K$), all $D_{\alpha}$ outside of $D(\phi_n,
\lambda_n)$ intersect this point on $\ol{Y} \subseteq \mc{Y}$.  So
if we blow down $\ol{Y} \subseteq \mc{Y}$, then all $D_{\alpha}$ for $\alpha
\notin D(\phi_n, \lambda_n)$ specialize to the same point. Since we can find $\alpha_1,\alpha_2 \in \ol{K} \setminus D(\phi_n,\lambda_n)$ with $\nu_K(\phi_n(\alpha_1)) = \lambda_n'$ and $\lambda_n' < \nu_K(\phi_n(\alpha_2)) < \lambda_n$, the previous line contradicts Proposition~\ref{Psmallerdiskoid} applied to the $v'$-model of $\proj^1_K$.  The same argument applied to the blow down of $\ol{Y}$
(i.e, the $v$-model of $\proj^1_K$) yields a contradiction if
$D_{\alpha}$ intersects a point of $\ol{Y}' \setminus \ol{Y}$.  So
$D_{\alpha}$ meets the intersection point $z$ of the two irreducible
components of the special fiber.

Now, suppose $\nu_K(\phi_n(\alpha)) \leq \lambda_n'$.  Fix $\beta \in \ol{K}$ such that $\lambda_n' < \nu_K(\phi_n(\beta)) < \lambda_n$. Corollary~\ref{Cdontmeet} shows that $D_{\alpha}$ and $D_{\beta}$ do not meet on the
$v'$-model of $\proj^1_K$, and thus not on $\mc{Y}$ either. In
particular, since $D_{\beta}$ meets $z$ by the previous paragraph,
$D_{\alpha}$ does not. A similar proof works if $\nu_K(\phi_n(\alpha))
\geq \lambda_n$ using the $v$-model instead of the $v'$-model.  This
completes the proof of the corollary.
\end{proof}

\subsection{Resolution of singularities on normal models of
  $\proj^1$}\label{Sresolution} 
 
Let $\mc{Y}$ be a normal model of $\proj^1_K$.  A \emph{minimal
  regular resolution of $\mc{Y}$} is a (proper) regular model $\mc{Z}$ of $\proj^1_K$
with a surjective, birational morphism $\pi: \mc{Z} \to \mc{Y}$ such
that the special fiber of $\mc{Z}$ contains no $-1$-components
(\cite[Definition 2.2.1]{CES}).  Such minimal regular
resolutions exist and are unique, e.g., by \cite[Theorem 2.2.2]{CES}.

In the remainder of \S\ref{Sresolution}, we recall a fundamental result from
\cite{ObusWewers} (which requires $k$ algebraically closed), expressing minimal regular resolutions of models of
$\proj^1_K$ with irreducible special fiber in terms of Mac Lane
valuations. 

\subsubsection{Shortest $N$-paths}
We start by recalling the notion of \emph{shortest $N$-path}, introduced in \cite{ObusWewers}. 

\begin{defn}\label{DNpath}
Let $N$ be a natural number, and let $a > a' \geq 0$ be rational
numbers.  An \emph{$N$-path} from $a$ to $a'$  is a
decreasing sequence $a = b_0/c_0 > b_1/c_1 > \cdots > b_r/c_r = a'$ of rational numbers in lowest terms such that
$$\frac{b_i}{c_i} - \frac{b_{i+1}}{c_{i+1}} = \frac{N}{\lcm(N, c_i)\lcm(N, c_{i+1})}$$ for
$0 \leq i \leq r-1$.  If, in addition, no proper subsequence of $b_0/c_0 > \cdots > b_r/c_r$ containing
  $b_0/c_0$ and $b_r/c_r$ is an $N$-path, then the sequence is called
  the \emph{shortest $N$-path} from $a$ to $a'$.  
  \end{defn}

\begin{remark}
By \cite[Proposition A.14]{ObusWewers}, the shortest $N$-path from
$a'$ to $a$ exists and is unique.
\end{remark}

\begin{remark}\label{R1pathconsec}
  Observe that two successive entries $b_i/c_i > b_{i+1}/c_{i+1}$ of a shortest $1$-path satisfy $b_i/c_i - b_{i+1}/c_{i+1} = 1/(c_ic_{i+1})$.
\end{remark}

\begin{example}\label{Efarey}
The sequence $1 > 1/2 > 2/5 > 3/8 > 1/3 > 0$ is a concatenation
of the shortest $1$-path from $1$ to $3/8$ with the shortest $1$-path
from $3/8$ to $0$.  Note that the denominators increase until $3/8$
and then decrease afterwards.
\end{example}

\subsubsection{Regular resolutions}
The following proposition expresses minimal regular resolutions in
terms of Mac Lane valuations and shortest $N$-paths.  We fix the
following notation.

\begin{notation}\label{Nyfprime}
If $v$ is a Mac Lane valuation, then $\mc{Y}_v$ is the $v$-model of
$\proj^1_K$, and $\mc{Y}_v^{\reg}$ is its minimal
regular resolution.  Furthermore, $\mc{Y}_{v, 0}$ is the $\{v_0,
v\}$-model of $\proj^1_K$, and $\mc{Y}_{v,0}^{\reg} \to \mc{Y}_{v,0}$ is its minimal
regular resolution.  Observe that if $\mc{X}$ is the $v_0$-model of $\proj^1_K$, then contracting the $v$-component of $\mc{Y}_{v,0}$ yields a canonical map
$\mc{Y}_{v,0}^{\reg} \to \mc{X}$ factoring through $\mc{Y}_{v,0}$. 
\end{notation}

\begin{prop}[{\cite[Theorem 7.8]{ObusWewers}}]\label{Pgeneralresolution}
Let $v =
[v_0,\, v_1(\phi_1) = \lambda_1, \ldots,\, v_n(\phi_n) = \lambda_n]$.
For each $i$, write $\lambda_i = b_i/c_i$ in lowest terms, and let
$N_i = \lcm_{j < i} c_j = \deg(\phi_i)$ (Corollary \ref{CvalueofN}).
Set $\lambda_0 = \lfloor \lambda_1 \rfloor$, as well as $N_0 = N_1 = 1$ and $e(v_0/v_{-1}) = 1$.
Then the minimal regular
resolution $\mc{Y}_v^{\reg}$ of $\mc{Y}_v$ is the normal model of $\proj^1_K$ that
includes exactly the following set of valuations:

\begin{itemize}
\item For each $1 \leq i \leq n$, the valuations
$$v_{i,\lambda} \colonequals [v_0,\, v_1(\phi_1) = \lambda_1, \ldots,\,
v_{i-1}(\phi_{i-1}) = \lambda_{i-1},\, v_{i}(\phi_i) =
\lambda],$$ as $\lambda$ ranges through the shortest $N_i$-path
from $\beta_i$ to $\lambda_i$, where $\beta_i$ is 
the least rational number greater than or equal to $\lambda_i$ in
$(1/N_i)\ints = \Gamma_{v_{i-1}}$.  In other words, $\beta = \lceil
N_i \lambda_i \rceil/N_i$.

\item For each $0 \leq i \leq n-1$, the valuations
$$w_{i, \lambda} \colonequals [v_0,\, v_1(\phi_1) = \lambda_1, \ldots,\, v_i(\phi_i) = \lambda_i,\ v_{i+1}(\phi_{i+1}) = \lambda],$$
as $\lambda$ ranges through the shortest $N_{i+1}$-path from
$\lambda_{i+1}$ to $e(v_i/v_{i-1})\lambda_i$, excluding the endpoints.
\item The valuation $\tilde{v}_0 \colonequals [v_0,\, v_1(\phi_1) = \lambda_0]$ (which is just
  $v_0$ if $\lambda_1 < 1$).
\end{itemize}
\end{prop}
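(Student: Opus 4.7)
The plan is to verify three properties of the set of Mac Lane valuations described: (a) the resulting normal model of $\proj^1_K$ is regular, (b) it dominates $\mc{Y}_v$, and (c) it is minimal, meaning no irreducible component has self-intersection $-1$. Domination is immediate because $v$ itself appears as $v_{n,\lambda_n}$ in the first bullet. Regularity is the substantive step, and minimality will follow from the same analysis via a contradiction argument.

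The central input I would establish is a local regularity criterion at closed points of a normal model of $\proj^1_K$. At the generic point of any component the local ring is already a DVR, so one reduces to closed points of the special fiber, which are of two kinds: points lying on a single component, and intersection points of two components. For a closed point on only one component, using Lemma~\ref{LA1coordinate} one can exhibit a coordinate on the component and check regularity of the local ring of $\mc{Y}$ directly via a local model over $\aff^1_{\mc{O}_K}$. For the intersection of two components corresponding to adjacent Mac Lane valuations $v' \prec v''$, I would set up explicit local generators of the maximal ideal using key polynomials and $v$-reciprocals (Definition~\ref{Dmaclanereciprocal}, Proposition~\ref{Pvalsame}), and show that the two-dimensional local ring is regular precisely when the ramification data of $v'$ and $v''$ together with the difference $v''(\phi) - v'(\phi)$, for the relevant common key polynomial $\phi$, satisfies a Bezout-type identity. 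A direct calculation would show this identity is equivalent to the $N$-path step condition $b/c - b'/c' = N/(\lcm(N,c)\lcm(N,c'))$.

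With the criterion in hand, I would verify it pairwise for consecutive valuations in the described dual graph. The first bullet produces, at each level $i$, a Farey chain $v_{i,\lambda}$ traversing the shortest $N_i$-path from $\beta_i$ down to $\lambda_i$; consecutive pairs satisfy the regularity criterion tautologically. The second bullet provides the interpolating chain $w_{i,\lambda}$ connecting the $v_{i+1}$-component down to the topmost component at level $i$ (corresponding to $\lambda = \beta_i$, which is compatible with the endpoint $e(v_i/v_{i-1})\lambda_i$ of the second bullet after reconciling value groups). The valuation $\tilde{v}_0$ handles the attachment to the Gauss component when $\lambda_1 \geq 1$. Minimality then follows by contradiction: if some intermediate component were a $-1$-curve, blowing it down would yield a smaller regular model in which the two erstwhile neighbors are now adjacent; the criterion would force those neighbors to satisfy the $N$-path step condition, contradicting the shortestness of the $N$-path that produced them.

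The principal obstacle is the local regularity criterion at an intersection point. Translating regularity of a two-dimensional local ring into an arithmetic statement about Mac Lane data requires explicitly producing local coordinates on $\mc{Y}$ near the intersection, and the Mac Lane formalism --- especially $v$-reciprocals and the diskoid machinery --- is precisely what is needed, but the bookkeeping is delicate when $v'$ and $v''$ differ at an intermediate level of the inductive description rather than only at the last radius. Carrying out this analysis uniformly across the three bullet types, and ensuring compatibility at the transitions between levels $i$ and $i+1$ (where the second bullet glues the level-$i$ and level-$(i+1)$ Farey chains), is where most of the labor lies.
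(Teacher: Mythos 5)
This paper does not actually prove Proposition~\ref{Pgeneralresolution}: it is imported verbatim from \cite[Theorem~7.8]{ObusWewers}, so there is no internal argument to measure your proposal against, and the fair comparison is with the proof in that reference, whose broad strategy (pairwise regularity of adjacent components governed by an arithmetic condition on the two radii, plus a minimality check) your plan does echo. Judged on its own terms, however, the proposal has real gaps. The entire technical content is concentrated in the ``local regularity criterion'' that you only assert: that the intersection point of two components whose valuations differ only in the last radius is regular exactly when the radii are consecutive entries of a shortest $N$-path is the heart of the Mac Lane--model machinery, and ``a direct calculation would show this identity is equivalent to the $N$-path step condition'' is a placeholder, not a proof. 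Likewise, closed points lying on a single component are not automatically fine: one must check regularity at the specialization of $\phi_n = 0$ and at the point at infinity of each component, which is precisely where $\mc{Y}_v$ fails to be regular in the first place, so Lemma~\ref{LA1coordinate} alone does not dispose of this case. Your minimality argument also needs more: blowing down a putative $-1$-component and invoking the pairwise criterion only works when that component has exactly two neighbours whose valuations differ in the last radius alone; the ends of the tails (one neighbour) and the junction vertices $v_i$ (three neighbours) require a separate multiplicity computation, e.g.\ via $-c_w e(w/v_0) = \sum_{w'} e(w'/v_0)$ as in the proof of Proposition~\ref{Pmultiplicitydecreases}, and the blow-down step tacitly uses that the former neighbours really do meet after contraction, i.e.\ the adjacency structure of Remark~\ref{Radjacencyres}, which itself has to be established (via Proposition~\ref{Psmallerdiskoid}) rather than read off from the statement.

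There is also a concrete error in your description of how the pieces glue: the chain $w_{i,\lambda}$ of the second bullet attaches at its lower end to $v_i$ itself (the excluded endpoint $\lambda = e(v_i/v_{i-1})\lambda_i$ gives $w_{i,\lambda} = v_i$, Remark~\ref{Rvi}), not to the level-$i$ component with $\lambda = \beta_i$; the tail $v_{i,\lambda}$ of the first bullet hangs off $v_i$ on the other side, with its \emph{free} end at $\beta_i$ (see Figure~\ref{Fresolutiongraph}). Since your whole verification is organized along adjacencies in the dual graph, getting this incidence structure right is not optional, and as written the pairwise checks would be run along the wrong edges at every transition between levels.
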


\begin{remark}\label{Rvi}
For $\lambda = e(v_i/v_{i-1})\lambda_i$, one sees that $w_{i, \lambda}
= v_i$.
\end{remark}

\begin{remark}\label{Radjacencyres}
For $v$ as in Proposition~\ref{Pgeneralresolution}, consider the set $S$
of valuations included in the minimal regular resolution
$\mc{Y}_v^{\reg}$ of the
$v$-model $\mc{Y}_v$ of $\proj^1_K$.  Using the partial order
$\prec$ on $S$, one constructs a tree whose vertices are the
elements of $S$ and where there is an
edge between two vertices $w$ and $w'$
if and only if $w \prec w'$ and there is no $w''$
with $w \prec w'' \prec w'$.  One can show that this tree is 
the dual graph of $\mc{Y}_v^{\reg}$ by a repeated application of Proposition~\ref{Psmallerdiskoid}.  This graph is shown in Figure~\ref{Fresolutiongraph}.
\end{remark}

\begin{figure}
\begin{center}

\setlength{\unitlength}{1.2mm}
\begin{picture}(140,50)

\put(10,45){\circle*{2}}
\put(8,48){$\tilde{v}_0$}
\put(20,45){\circle*{2}}
\put(17,48){$w_{0,\lambda}$}
\put(40,45){\circle*{2}}
\put(37,48){$w_{0,\lambda}$}
\put(50,45){\circle*{2}}
\put(48,48){$v_1$}
\put(60,45){\circle*{2}}
\put(57,48){$w_{1,\lambda}$}
\put(80,45){\circle*{2}}
\put(74,48){$w_{n-2,\lambda}$}
\put(90,45){\circle*{2}}
\put(86,48){$v_{n-1}$}
\put(130,45){\circle{2}}
\put(128,48){$v_n$}
\put(100,45){\circle*{2}}
\put(95,48){$w_{n-1,\lambda}$}
\put(120,45){\circle*{2}}
\put(115,48){$w_{n-1,\lambda}$}
\put(119,41){$v'$}

\put(50,35){\circle*{2}}
\put(43,35){$v_{1,\lambda}$}
\put(50,15){\circle*{2}}
\put(43,15){$v_{1,\lambda}$}
\put(50,5){\circle*{2}}
\put(42,5){$v_{1,\beta_1}$}
\put(90,35){\circle*{2}}
\put(79,35){$v_{n-1,\lambda}$}
\put(90,15){\circle*{2}}
\put(79,15){$v_{n-1,\lambda}$}
\put(90,5){\circle*{2}}
\put(76,5){$v_{n-1,\beta_{n-1}}$}
\put(130,35){\circle*{2}}
\put(123,35){$v_{n,\lambda}$}
\put(131.5,34.5){$v''$}
\put(130,15){\circle*{2}}
\put(123,15){$v_{n,\lambda}$}
\put(130,5){\circle*{2}}
\put(121,5){$v_{n,\beta_{n}}$}

\put(11,45){\line(1,0){8}}
\put(41,45){\line(1,0){8}}
\put(51,45){\line(1,0){8}}
\put(81,45){\line(1,0){8}}
\put(91,45){\line(1,0){8}}
\put(121,45){\line(1,0){8}}
\put(50,45){\line(0,-1){10}}
\put(90,45){\line(0,-1){10}}
\put(50,15){\line(0,-1){10}}
\put(90,15){\line(0,-1){10}}
\put(130,15){\line(0,-1){10}}
\put(130,44){\line(0,-1){9}}

\linethickness{1pt}
\dottedline{3}(20,45)(40,45)
\dottedline{3}(60,45)(80,45)
\dottedline{3}(100,45)(118,45)
\dottedline{3}(50,35)(50,15)
\dottedline{3}(90,35)(90,15)
\dottedline{3}(130,35)(130,15)

\end{picture}
\end{center}
\caption{The dual graph of the minimal resolution of the $v =
  v_n$-model of $\proj^1_K$.  The white vertex corresponds to the
  strict transform of the $v$.  The vertex labeled $v'$ (resp.\ $v''$)
  corresponds to the successor (resp.\ precursor) valuation of $v_n$, see
  \S\ref{Scontractions}.}\label{Fresolutiongraph}
\end{figure}
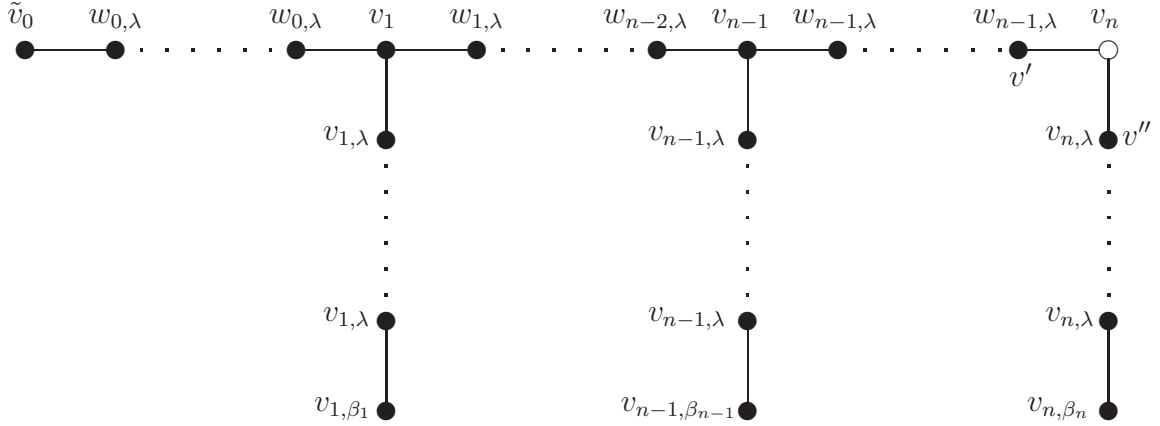

\begin{corollary}\label{Cthrowin0}
With the notation of Proposition~\ref{Pgeneralresolution}, the valuations included in $\mc{Y}_{v,0}^{\reg}$ are the valuations
      included in $\mc{Y}_v^{\reg}$ as well as $v_0$ and the valuations $[v_0,\, v_1(\phi_1) =
      \lambda]$ for $\lambda \in \{1, 2,\ldots, \lambda_0 - 1\}$.
      Equivalently, the valuations included in $\mc{Y}_{v,0}^{\reg}$ are exactly
      the valuations we would get from Proposition~\ref{Pgeneralresolution} if we changed our convention from $\lambda_0 = \lfloor \lambda_1 \rfloor$ to $\lambda_0 = 0$. 
\end{corollary}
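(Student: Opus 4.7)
My plan is to sandwich $\mc{Y}_{v,0}^{\reg}$ between a lower bound on its valuations and an explicit candidate model, and then appeal to minimality.

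First I would observe that $\mc{Y}_{v,0}^{\reg}$ is regular and dominates $\mc{Y}_v$ (through $\mc{Y}_{v,0}$), so by minimality of $\mc{Y}_v^{\reg}$ it dominates $\mc{Y}_v^{\reg}$ and therefore includes all of its valuations; since $\mc{Y}_{v,0}^{\reg}$ also dominates $\proj^1_{\O_K}$, it includes $v_0$. If $\lambda_1 < 1$, then $\lambda_0 = 0$ and $\tilde{v}_0 = v_0$, so $\mc{Y}_v^{\reg}$ already contains $v_0$, which makes $\mc{Y}_v^{\reg}$ itself a regular model dominating $\mc{Y}_{v,0}$; by minimality of $\mc{Y}_{v,0}^{\reg}$ we get $\mc{Y}_v^{\reg} = \mc{Y}_{v,0}^{\reg}$, matching the claim.

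Next I would verify the equivalence of the two descriptions in the corollary. The key combinatorial observation is that the shortest $1$-path from $\lambda_1$ to $0$ factors as the concatenation of the shortest $1$-path from $\lambda_1$ to $\lfloor \lambda_1 \rfloor$ with the integer sequence $\lfloor \lambda_1 \rfloor > \lfloor \lambda_1 \rfloor - 1 > \cdots > 1 > 0$; the latter is itself a shortest $1$-path because consecutive differences equal $1 = 1/(1\cdot 1)$, as in Remark~\ref{R1pathconsec}. Excluding endpoints from the concatenated path, together with the new convention $\tilde{v}_0 = [v_0,\, v_1(\phi_1) = 0] = v_0$, produces precisely the extra valuations $v_0$ and $[v_0,\, v_1(\phi_1) = \lambda]$ for $\lambda \in \{1, \ldots, \lambda_0 - 1\}$ listed in the first assertion.

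Finally I would argue that the candidate model $\mc{Z}$ with this prescribed valuation set is regular, using the proof of Proposition~\ref{Pgeneralresolution} (i.e., \cite[Theorem 7.8]{ObusWewers}). That argument establishes regularity component by component via shortest-$N$-path conditions, and only uses that $\lambda_0$ is a non-negative integer with $\lambda_0 \leq \lambda_1$, a condition that $\lambda_0 = 0$ satisfies. Hence $\mc{Z}$ is regular, it dominates $\mc{Y}_{v,0}$ (since it contains $v$ and $v_0$), and so by minimality of $\mc{Y}_{v,0}^{\reg}$ it dominates $\mc{Y}_{v,0}^{\reg}$; combined with the inclusion of valuations from the first paragraph this forces $\mc{Z} = \mc{Y}_{v,0}^{\reg}$.

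The main obstacle I anticipate is verifying that the regularity portion of the proof of Proposition~\ref{Pgeneralresolution} genuinely extends to the convention $\lambda_0 = 0$. If the cited proof invokes $\lambda_0 = \lfloor \lambda_1 \rfloor$ essentially---for instance to pin down a particular self-intersection of the bottom component---then one must redo the local check at $v_0$ and along the chain $v_0, [v_0,\, v_1(\phi_1) = 1], \ldots, [v_0,\, v_1(\phi_1) = \lambda_0]$, verifying that each pair of adjacent valuations gives a transverse intersection (which is immediate from the integer-spaced shortest $1$-path). I expect this to be routine but it is the only substantive input beyond minimality formalities.
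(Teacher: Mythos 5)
Your first paragraph (lower bound on the valuations of $\mc{Y}_{v,0}^{\reg}$), the treatment of the case $\lambda_1 < 1$, and the path-concatenation check of the "equivalent description" are all fine. The genuine gap is in the last step, where you claim that regularity of the candidate model $\mc{Z}$ plus minimality of $\mc{Y}_{v,0}^{\reg}$ "forces $\mc{Z} = \mc{Y}_{v,0}^{\reg}$". What your two inclusions actually give is
$$\textup{vals}(\mc{Y}_v^{\reg}) \cup \{v_0\} \ \subseteq \ \textup{vals}(\mc{Y}_{v,0}^{\reg}) \ \subseteq \ \textup{vals}(\mc{Z}),$$
and when $\lambda_0 \geq 2$ the outer two sets differ by exactly the intermediate valuations $[v_0,\, v_1(\phi_1) = \lambda]$ for $\lambda \in \{1, \ldots, \lambda_0 - 1\}$ --- precisely the new content of the corollary. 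The sandwich does not close: nothing you have said rules out that $\mc{Y}_{v,0}^{\reg}$ is a proper contraction of $\mc{Z}$ omitting some of these components, so your argument is equally consistent with the (false) statement that they are absent. You must show these components are genuinely forced, i.e., that $\mc{Z}$ itself is \emph{minimal} over $\mc{Y}_{v,0}$, not merely regular and dominating.

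Two ways to close it. You could check that no component of $\mc{Z}$ exceptional for $\mc{Z} \to \mc{Y}_{v,0}$ is a $-1$-curve: each new component $[v_0,\, v_1(\phi_1)=\lambda]$, $1 \leq \lambda \leq \lambda_0-1$, has multiplicity $1$ and two multiplicity-$1$ neighbours, so the relation $-c_w e(w/v_0) = \sum_{w'} e(w'/v_0)$ (as in the proof of Proposition~\ref{Pmultiplicitydecreases}) gives self-intersection $-2$, and the old exceptional components already had self-intersection $\leq -2$ in $\mc{Y}_v^{\reg}$; minimality then follows from the uniqueness statement in \cite{CES}. The paper instead proceeds differently and more economically: it takes the normal model whose valuations are those of $\mc{Y}_v^{\reg}$ together with $v_0$, observes its only possible singularity is the crossing of the $v_0$- and $[v_0,\, v_1(\phi_1)=\lambda_0]$-components, and invokes \cite[Corollary~7.5]{ObusWewers}, which resolves such a crossing \emph{minimally} by inserting exactly the components along the shortest $1$-path $\lambda_0 > \lambda_0-1 > \cdots > 0$. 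That citation delivers regularity and minimality in one stroke, whereas your route also leans on the unverified claim that the proof of Proposition~\ref{Pgeneralresolution} goes through verbatim with $\lambda_0 = 0$; your proposed local check would repair the regularity half, but not, by itself, the minimality half identified above.
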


\begin{proof}
      If $\lambda_0 = 0$, then $\mc{Y}_v^{\reg}$ includes $v_0$, so $\mc{Y}_v^{\reg} = \mc{Y}_{v,0}^{\reg}$.  If $\lambda_0 \geq 1$, then if $\mc{Z}$ is the normal model of
$\proj^1_K$ including the valuations included in $\mc{Y}$ as well as
$v_0$, then there may be a singularity where the components
corresponding to $v_0$ and $[v_0,\, v_1(\phi_1) = \lambda_0]$ cross.
Since $v_0$ and $[v_0,\, v_1(\phi_1) = 0]$ are the same
valuation, and since $\lambda_0 > \lambda_0 - 1 > \cdots > 1 > 0$ is
the shortest $1$-path from $\lambda_0$ to $0$, \cite[Corollary~7.5]{ObusWewers} shows that resolving this singularity yields exactly
the description of $\mc{Y}_{v,0}^{\reg}$ in the statement of the corollary.  The equivalent description is clear, since $\lambda_1 < 1$
is equivalent to $\lambda_0 = 0$.
\end{proof}

\begin{prop}\label{Pmultiplicitydecreases}
  Let $v = [v_0,\, v_1(\phi_1) = \lambda_1,\, \ldots,\, v_n(\phi_n)
  = \lambda_n]$ be a Mac Lane valuation.  Let $\mc{Y}_{v,0}^{\reg}$ be the minimal regular resolution of the
  $\{v, v_0\}$-model of $\proj^1_K$.  If $w$ is a valuation included in
  $\mc{Y}_{v,0}^{\reg}$, then $e(w/v_0) \leq e(v/v_0)$, and furthermore,
  if $e(w/v_0) = e(v/v_0)$, then $w \preceq v$.
\end{prop}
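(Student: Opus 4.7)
The plan is to enumerate the valuations $w$ included in $\mc{Y}_{v,0}^{\reg}$ via Corollary~\ref{Cthrowin0}, compute each $e(w/v_0)$ via Remark~\ref{Rrelram}, and reduce both claims to a monotonicity property of shortest $N$-paths.

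Write $v = [v_0,\,v_1(\phi_1) = b_1/c_1,\,\ldots,\,v_n(\phi_n) = b_n/c_n]$ in lowest terms and set $N_i = \lcm(c_1,\ldots,c_{i-1})$ with $N_1 = 1$, so that $e(v/v_0) = \lcm(c_1,\ldots,c_n)$. Applying Lemma~\ref{Lvnvprime} to any proper key polynomial over $v$ yields $c_i \nmid N_i$ for each $i$, which forces the sequence $\lcm(c_1,\ldots,c_i)$ to be strictly increasing in $i$. By Corollary~\ref{Cthrowin0}, every $w$ included in $\mc{Y}_{v,0}^{\reg}$ has the form $v_{i,\lambda}$, $w_{i,\lambda}$, or $v_0$, taking $\lambda_0 = 0$ in Proposition~\ref{Pgeneralresolution}; in each case $e(w/v_0) = \lcm(N_j, q)$ for the appropriate $j$, where $q$ is the denominator in lowest terms of the parameter $\lambda$ defining $w$.

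The key combinatorial input is the claim that along a shortest $N$-path one of whose endpoints has denominator dividing $N$, the function $\lcm(N, q)$ evaluated at successive entries is strictly monotone, taking value $N$ at the special endpoint and $\lcm(N, c')$ at the other endpoint (with denominator $c'$). Granting this, the branch path for $v_{i,\lambda}$ runs from $\beta_i$ (denominator dividing $N_i$) to $\lambda_i$, giving strict increase from $N_i$ to $\lcm(c_1,\ldots,c_i)$. The spine path for $w_{i,\lambda}$ runs from $\lambda_{i+1}$ to $e(v_i/v_{i-1})\lambda_i$, whose denominator one checks divides $N_{i+1}$, giving strict decrease from $\lcm(c_1,\ldots,c_{i+1})$ down to $N_{i+1}$; excluding endpoints yields strict inequality. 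Comparing to $e(v/v_0) = \lcm(c_1,\ldots,c_n)$ and using the strict monotonicity of $i \mapsto \lcm(c_1,\ldots,c_i)$, one obtains $e(w/v_0) \leq e(v/v_0)$ in every case, with equality only at the branch endpoint $\lambda = \lambda_n$, i.e., $w = v_n = v$, which is $\preceq v$ trivially. In particular we obtain the stronger statement that equality forces $w = v$.

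The main obstacle is establishing the monotonicity claim for shortest $N$-paths. I expect to prove it by induction on path length using the recursion $b_i/c_i - b_{i+1}/c_{i+1} = N/(\lcm(N, c_i)\lcm(N, c_{i+1}))$ from Definition~\ref{DNpath} together with the minimality condition defining ``shortest''; alternatively, one can extract it directly from the structure results on shortest $N$-paths in \cite[Appendix A]{ObusWewers}.
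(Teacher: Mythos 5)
Your strategy (enumerate the components of $\mc{Y}_{v,0}^{\reg}$ via Corollary~\ref{Cthrowin0}, compute multiplicities via Remark~\ref{Rrelram}, and appeal to a monotonicity property of shortest $N$-paths) is genuinely different from the paper's argument, but as written it has a real gap. Proposition~\ref{Pmultiplicitydecreases} concerns an \emph{arbitrary} Mac Lane valuation $v$, and your appeal to Lemma~\ref{Lvnvprime} is not available at the top level: that lemma presupposes a proper key polynomial over $v$, and such a polynomial exists only when $\lambda_n \notin \Gamma_{v_{n-1}}$ (if $\lambda_n \in \Gamma_{v_{n-1}}$ then $e(v_n/v_{n-1})=1$, and Lemma~\ref{Lfdegree}(iii) rules out a key polynomial of degree larger than $\deg\phi_n$). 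So your assertion $c_n \nmid N_n$ can fail, and it fails exactly in cases the paper needs: in the proof of Theorem~\ref{Thorizontalregular} the proposition is applied to $v_f'$ and $v_f''$, whose final invariant frequently lies in $\Gamma_{v_{n-1}}$ (e.g.\ $v_f'=v_0$ for Eisenstein $f$, or $\lambda''=\lceil N_n\lambda_n\rceil/N_n$). In that degenerate case your claimed stronger conclusion ``equality forces $w=v$'' is false: for $v=[v_0,\,v_1(x)=2]$ every component of $\mc{Y}_{v,0}^{\reg}$ has multiplicity $1=e(v/v_0)$, and more generally when $\lambda_n\in\Gamma_{v_{n-1}}$ all the $w_{n-1,\lambda}$ and $v_{n-1}$ attain equality. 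The desired conclusion $w\preceq v$ does hold for these $w$, but your case analysis never establishes it.

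Separately, the combinatorial lemma you lean on is both unproven and false as stated. On a spine path, the shortest $N_{i+1}$-path from $\lambda_{i+1}$ down to $e(v_i/v_{i-1})\lambda_i$ may pass through several points of $(1/N_{i+1})\ints$ (after rescaling, the shortest $1$-path from $7/2$ to $1$ is $7/2>3>2>1$), along which $\lcm(N_{i+1},q)$ is constant equal to $N_{i+1}$ rather than strictly monotone; and if both endpoints lie in $(1/N)\ints$ the path is an arithmetic progression with step $1/N$ and the function is constant. What you actually need is the weaker (true) statement that $\lcm(N,q)$ is maximized at the endpoint not lying in $(1/N)\ints$, with strict decrease until the value $N$ is first reached --- but this is neither stated correctly nor proved; you defer it to an induction or to \cite{ObusWewers}. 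For comparison, the paper's proof avoids all path combinatorics: it takes a maximal violator $w$, notes its self-intersection satisfies $c_w\le -2$ by minimality of the resolution, and applies the identity $-c_w\,e(w/v_0)=\sum_{w'}e(w'/v_0)$ over the at most two neighbors of $w$ in the dual graph to reach a contradiction; that route also handles the degenerate case above with no extra work.
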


\begin{proof}
For a contradiction, choose a valuation $w$ such that $e(w/v_0)$ is
maximal among those $w$ violating the
proposition, and among these choose $w$ such that $w$ is maximal under
$\preceq$.  

First observe that, since $e(v_i/v_0) \leq e(v_n/v_0)$ for all $i
 \leq n$ and $v_i \preceq v_n$, we may assume
 \begin{equation}\label{E:whereisw}
  w \neq v_i \ \ \textup{ for any } i.
 \end{equation}

 
 Let $c_w$ be the
 self-intersection number of the $w$-component of $\mc{Y}_{v,0}^{\reg}$.  Since $\mc{Y}_{v,0}^{\reg}$ is the minimal regular resolution of the $\mc{Y}_{v,v_0}$-model, and since $w \notin \{v,v_0\}$ by (\ref{E:whereisw}), we
 have $c_w \neq -1$, thus $c_w \leq -2$.  By standard intersection
 theory on regular arithmetic surfaces (e.g.,
 \cite[(3.4)]{ObusWewers}), we have 
$$-c_w e(w/v_0) = \sum_{w'} e(w'/v_0),$$ where the sum is taken over
all $w'$ such that the $w'$-component intersects the $w$-component.
Since $w \neq v_i$ for any $i$ by (\ref{E:whereisw}), Figure~\ref{Fresolutiongraph} shows that there are at most two such $w'$.

Since $-c_w \geq 2$ and by assumption $e(w/v_0) \geq e(w'/v_0)$ for all $w'$ in the
sum, we find that there are exactly two $w'$ and
$e(w'/v_0) = e(w/v_0)$ for each of them.  By
Remark~\ref{Radjacencyres}, one of the $w'$ satisfies $w \prec w'$.
Since $w$ is maximal under $\prec$, we conclude that $w'$ does not
violate the proposition.  But $w \prec w'$ and $e(w/v_0) = e(w'/v_0)$ imply that $w$ does
not violate the proposition either, a contradiction. \qedhere

\end{proof}

\subsection{Valuations related to a given Mac Lane valuation}\label{Scontractions} 
Let $v = [v_0,\, v_1(\phi_1) = \lambda_1,\ldots,\, v_n(\phi_n) = \lambda_n]$.
Recall that $c_i$ is the denominator of $\lambda_i$, when written in lowest terms.  Let $N_n = \lcm(c_1, \ldots, c_{n-1}) = \deg(\phi_n)$ (Corollary
\ref{CvalueofN}).  We assume that $n \geq 1$ and $\lambda_n \notin
\Gamma_{v_{n-1}}  = (1/N_n)\ints$.

Let $\mc{Y}_v$ be the $v$-model of $\proj^1_K$, and let
$\mc{Y}_v^{\reg}$ be its minimal regular resolution.  By Proposition
\ref{Pgeneralresolution}, the following Mac Lane valuations are
included in $\mc{Y}_v^{\reg}$:
\begin{itemize}
\item $v' \colonequals w_{n-1,\lambda'} =  [v_0,\, v_1(\phi_1) = \lambda_1,\ldots,\,
  v_{n-1}(\phi_{n-1}) = \lambda_{n-1},\, v_n'(\phi_n) = \lambda'],$
\item $v'' \colonequals v_{n,\lambda''} =  [v_0,\, v_1(\phi_1) = \lambda_1,\ldots,\,
  v_{n-1}(\phi_{n-1}) = \lambda_{n-1},\, v_n''(\phi_n) = \lambda''],$
\end{itemize}
where $\lambda'$ is the entry directly following
$\lambda_n$ in the shortest
$N_n$-path from $\lambda_n$ to $e(v_{n-1}/v_{n-2})\lambda_{n-1}$, and
$\lambda''$ is the entry directly preceeding $\lambda_n$ in the
shortest $N_n$-path from $\lceil N_n \lambda_n \rceil/N_n$ to $\lambda_n$.  The
valuation $v'$ (resp.\ $v''$) is called the \emph{successor} (resp.\
\emph{precursor}) valuation to $v$.

In the description of the \textit{minimal} embedded resolution of an irreducibile horizontal divisor, we have to analyze various contractions of the model described in Proposition~\ref{Pgeneralresolution} (Definition~\ref{D3types}, Definition~\ref{Dstar}) and the specializations of horizontal divisors on them (Lemma~\ref{Lspecializations}). For this purpose, it is helpful to introduce some convenient notation to refer to certain special valuations appearing in Proposition~\ref{Pgeneralresolution}. Let us write
\begin{itemize}
\item $v^{*} \colonequals w_{n-1,\lambda^{*}} =  [v_0,\, v_1(\phi_1) = \lambda_1,\ldots,\,
  v_{n-1}(\phi_{n-1}) = \lambda_{n-1},\, v_n'(\phi_n) = \lambda^*],$
\item $v^{**} \colonequals v_{n,\lambda^{**}} =  [v_0,\, v_1(\phi_1) = \lambda_1,\ldots,\,
  v_{n-1}(\phi_{n-1}) = \lambda_{n-1},\, v_n''(\phi_n) = \lambda^{**}],$
\end{itemize}
where $v^*$ (resp.\ $v^{**}$) can represent any of the valuations amongst 
the $w_{n-1, \lambda}$ (resp.\ the $v_{n, \lambda}$) from Proposition~\ref{Pgeneralresolution}. Later in Definition~\ref{Dstar}, we will specialize to specific choices of $v^*$ and $v^{**}$ depending on which regular model is being considered. In the remainder of this section, we establish some inequalities bounding $\lambda^*,\lambda^{**}$ that are valid for all choices of $v^*, v^{**}$. These will then be used in the proof of Lemma~\ref{Lindividualterms}, which is the key technical input for the main theorem of the paper.

Since $\lambda_n \notin (1/N_n)\ints$, we have
$\lfloor N_n \lambda_n \rfloor \leq N_n\lambda' < N_n \lambda_n < N_n
\lambda'' \leq \lceil N_n \lambda_n \rceil$, the first inequality
coming from \cite[Corollaries A.7, A.11]{ObusWewers}.
Write $\tilde{\lambda}_n$ (resp.\ $\tilde{\lambda}'$,
$\tilde{\lambda}''$, $\tilde{\lambda}^*$, $\tilde{\lambda}^{**}$)
for $N_n\lambda_n - \lfloor N_n \lambda_n \rfloor$ (resp.\
$N_n\lambda' - \lfloor N_n \lambda_n \rfloor$, $N_n\lambda'' - \lfloor
N_n \lambda_n \rfloor$, $N_n\lambda^* - \lfloor
N_n \lambda_n \rfloor$, $N_n\lambda^{**} - \lfloor
N_n \lambda_n \rfloor$).  Then we obtain 
\[
0 \leq \tilde{\lambda}' < \tilde{\lambda}_n  <
\tilde{\lambda}'' \leq 1.
\]

\begin{prop}\label{Pprimespaths}
Let $e$, $e'$, $e''$, $e^*$, and $e^{**}$ be the denominators of
          $\tilde{\lambda}_n$, $\tilde{\lambda}'$,
          $\tilde{\lambda}''$, $\tilde{\lambda}^*$, and $\tilde{\lambda}^{**}$, respectively.
  \begin{enumerate}[\upshape (i)]
\item  The number $\tilde{\lambda}'$ immediately follows $\tilde{\lambda}_n$
  in the shortest $1$-path from $\tilde{\lambda}_n$ to $0$.
\item The number $\tilde{\lambda}''$ immediately preceeds
  $\tilde{\lambda}_n$ in the shortest $1$-path from $1$ to
  $\tilde{\lambda}_n$.
\item The number $\tilde{\lambda}^*$ is on the shortest $1$-path from
  $\tilde{\lambda}_n$ to $0$.
\item The number $\tilde{\lambda}^{**}$ is on the shortest $1$-path from
  $1$ to $\tilde{\lambda}_n$.
  
  \end{enumerate}
\end{prop}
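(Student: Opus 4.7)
The plan is to reduce all four statements to properties of shortest $1$-paths via the order-preserving bijection $T(\lambda) \colonequals N_n\lambda - \lfloor N_n\lambda_n\rfloor$ of $\Q$. If $\lambda = b/c$ in lowest terms, then $T(\lambda)$ has reduced denominator $c/\gcd(N_n, c)$; applied to $\lambda_n, \lambda', \lambda'', \lambda^*, \lambda^{**}$, this produces the denominators $e, e', e'', e^*, e^{**}$ named in the proposition. The key calculation is that $T$ interchanges $N_n$-path and $1$-path steps: if $\mu_1 > \mu_2$ are consecutive entries of an $N_n$-path with reduced denominators $c_1, c_2$, and $\tilde{c}_i \colonequals c_i/\gcd(N_n, c_i)$ is the reduced denominator of $T(\mu_i)$, then using $\lcm(N_n, c_i) = N_n\tilde{c}_i$ yields
\[
T(\mu_1) - T(\mu_2) = N_n(\mu_1 - \mu_2) = \frac{N_n^2}{\lcm(N_n, c_1)\lcm(N_n, c_2)} = \frac{1}{\tilde{c}_1 \tilde{c}_2},
\]
which is exactly the $1$-path step condition of Remark~\ref{R1pathconsec}. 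Combined with the uniqueness of shortest paths (\cite[Proposition~A.14]{ObusWewers}), this shows $T$ sends shortest $N_n$-paths to shortest $1$-paths between the corresponding endpoints whenever both endpoints lie in $[0, 1]$.

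For parts (ii) and (iv), the shortest $N_n$-path from $\lceil N_n\lambda_n\rceil/N_n$ to $\lambda_n$ maps under $T$ to the shortest $1$-path from $1$ to $\tilde{\lambda}_n$, using $\lceil N_n\lambda_n\rceil - \lfloor N_n\lambda_n\rfloor = 1$ (since $\lambda_n \notin (1/N_n)\ints$) and $\tilde{\lambda}_n \in (0, 1)$. Since $\lambda''$ is the entry immediately preceding $\lambda_n$ on this $N_n$-path and $\lambda^{**}$ is an arbitrary entry, $\tilde{\lambda}''$ and $\tilde{\lambda}^{**}$ play the corresponding roles on the image $1$-path, proving (ii) and (iv).

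For parts (i) and (iii), the shortest $N_n$-path from $\lambda_n$ to $e(v_{n-1}/v_{n-2})\lambda_{n-1}$ sends under $T$ to a sequence ending at $N_n e(v_{n-1}/v_{n-2})\lambda_{n-1} - \lfloor N_n\lambda_n\rfloor$; a short computation shows this is an integer (writing $N_n$ as $\lcm(\lcm(c_1,\ldots,c_{n-2}), c_{n-1})$ and clearing denominators), and it is $\leq 0$ because $\lambda_n > v_{n-1}(\phi_n) = e(v_{n-1}/v_{n-2})\lambda_{n-1}$ by Lemma~\ref{Lfdegree}(ii). The critical step is to show that $\lfloor N_n\lambda_n\rfloor/N_n$ is an entry on this $N_n$-path; granting this, the initial segment from $\lambda_n$ to $\lfloor N_n\lambda_n\rfloor/N_n$ is itself a shortest $N_n$-path (subpaths of shortest paths are shortest), which maps under $T$ to the shortest $1$-path from $\tilde{\lambda}_n$ to $0$. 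Then (i) follows directly since $\lambda'$ is the next entry after $\lambda_n$, and (iii) follows for every $\lambda^*$ with $\tilde{\lambda}^* \geq 0$ (i.e., $\lambda^* \geq \lfloor N_n\lambda_n\rfloor/N_n$).

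The main obstacle is justifying the claim that $\lfloor N_n\lambda_n\rfloor/N_n$ lies on the shortest $N_n$-path from $\lambda_n$ to $e(v_{n-1}/v_{n-2})\lambda_{n-1}$. My approach is to prove the stronger combinatorial statement that every element of $(1/N_n)\ints$ strictly between the two endpoints appears as an entry of this shortest path --- equivalently, after applying $T$, every integer in the image interval is an entry of the resulting $1$-step sequence. This should follow either by direct induction on the structure of shortest $1$-paths, or by appeal to the denominator-monotonicity results in \cite[Appendix~A]{ObusWewers}; intuitively, any ``shortcut'' past an integer midpoint would have to combine two $1$-path steps whose denominators violate the structural constraints on consecutive denominators in a shortest $1$-path.
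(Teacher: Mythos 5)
Your strategy is in essence the paper's own: the paper likewise rescales by $N_n$ and translates by $-\lfloor N_n\lambda_n\rfloor$, but it delegates the two facts you need to \cite[Lemma/Corollary A.7 and Lemma/Corollary A.11]{ObusWewers} --- namely that multiplication by $N_n$ matches (shortest) $N_n$-paths with (shortest) $1$-paths, which is your step computation and is correct, and that the shortest $1$-path from $N_n\lambda_n$ down to $N_ne(v_{n-1}/v_{n-2})\lambda_{n-1}$ passes through the integer $\lfloor N_n\lambda_n\rfloor$, with the initial segment up to it being the shortest $1$-path from $N_n\lambda_n$ to $\lfloor N_n\lambda_n\rfloor$. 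The latter is exactly the step you flag as ``the main obstacle'' and do not prove, so as written there is a gap in parts (i) and (iii); but it closes in one line. A single $1$-path step can never straddle an integer: if $b_i/c_i > m > b_{i+1}/c_{i+1}$ with $m \in \ints$ and both fractions in lowest terms, then $b_i/c_i - b_{i+1}/c_{i+1} \geq 1/c_i + 1/c_{i+1} > 1/(c_ic_{i+1})$, contradicting the step size in Remark~\ref{R1pathconsec}. Hence every integer strictly between the endpoints of a $1$-path is an entry of it, which is precisely your ``stronger combinatorial statement''; your splicing observation (a subsequence omitting an interior entry of the truncated path would splice with the tail to contradict minimality of the whole path) then gives that the initial segment is itself shortest. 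With that line added --- or with a citation to the appendix results, as the paper does --- your argument is complete, and your treatment of (ii) and (iv) needs no change since there the endpoints already match after translation.

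One further remark: your caveat in part (iii), restricting to $\lambda^*$ with $\tilde{\lambda}^* \geq 0$, is not a defect of your argument relative to the paper. For $n \geq 2$ the shortest $N_n$-path from $\lambda_n$ to $e(v_{n-1}/v_{n-2})\lambda_{n-1}$ can contain interior entries below $\lfloor N_n\lambda_n\rfloor/N_n$ (for instance $N_n = 2$, $\lambda_n = 17/4$, lower endpoint $3$ gives $17/4 > 4 > 7/2 > 3$, and $\lambda^* = 7/2$ has $\tilde{\lambda}^* = -1$), so statement (iii) must in any case be read with this implicit restriction; the downstream use in Corollary~\ref{Cindexrelations}(iii) is unaffected, since when $\tilde{\lambda}^* < 0$ one has $\tilde{\lambda}_n - \tilde{\lambda}^* > 1/(ee^*)$ and $\lambda^* \neq \lambda'$ automatically.
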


\begin{proof}
  By \cite[Lemma A.7]{ObusWewers}, $N_n \lambda'$ immediately follows
$N_n\lambda_n$ in the shortest $1$-path from $N_n\lambda_n$ to
$N_ne(v_i/v_{i-1})\lambda_{n-1}$, and thus in the shortest $1$-path
from $N_n\lambda_n$ to $\lfloor N_n \lambda_n \rfloor$ by \cite[Lemma A.11]{ObusWewers}.   
Since translating by an integer preserves shortest $1$-paths,
subtracting $\lfloor N_n \lambda_n \rfloor$ from all entries of these
paths yields part (i).  Part (ii) follows similarly, using that
$N_n\lambda''$ immediately precedes $N_n\lambda_n$ in the shortest
$N_n$-path from $\lceil N_n \lambda_n \rceil$ to $N_n \lambda_n$.
The proofs of parts (iii) and (iv) are essentially the same as the
proofs of parts (i) and (ii), respectively.
%
\end{proof}

\begin{example}\label{Eroles}
 If $\tilde{\lambda}_n = 3/8$, we would have
  $\tilde{\lambda}' = 1/3$ and
  $\tilde{\lambda}'' = 2/5$ (cf.\ Example \ref{Efarey}).  We could
  take $\tilde{\lambda}^*$ to be $1/3$ or $0$, and we could take
  $\tilde{\lambda}^{**}$ to be $2/5$, $1/2$, or $1$.
\end{example}

\begin{corollary}\label{Cindexrelations}
Let $e$, $e'$, $e''$, $e^*$, and $e^{**}$ be as in Proposition
\ref{Pprimespaths}.  Then 
\begin{enumerate}[\upshape (i)]
\item $\lambda_n - \lambda' = 1/(N_nee')$.
\item $\lambda'' - \lambda_n = 1/(N_nee'')$.
\item $\lambda_n - \lambda^* \geq 1/(N_nee^*)$, with equality if and
  only if $\lambda^* = \lambda'$.
\item $\lambda^{**} - \lambda_n \geq 1/(N_nee^{**})$, with equality
  if and only if $\lambda^{**} = \lambda''$.
\end{enumerate}
\end{corollary}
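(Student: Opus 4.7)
The plan is to reduce all four parts to computations of differences along the two shortest $1$-paths produced by Proposition~\ref{Pprimespaths}, then to read off these differences using the Farey-neighbor structure of consecutive entries (Remark~\ref{R1pathconsec}), supplemented by a single shortest-path minimality argument. The tilde normalization preserves differences up to the factor $N_n$: since $\lfloor N_n \lambda_n \rfloor \in \ints$, one has $|\lambda_n - \mu| = |\tilde{\lambda}_n - \tilde{\mu}|/N_n$ for any $\mu \in \{\lambda', \lambda'', \lambda^*, \lambda^{**}\}$, so it suffices to compute (or bound) the analogous differences of the $\tilde{\lambda}$'s.

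Parts (i) and (ii) are then immediate. By Proposition~\ref{Pprimespaths}(i) and (ii), $\tilde{\lambda}'$ and $\tilde{\lambda}''$ are adjacent to $\tilde{\lambda}_n$ in the two shortest $1$-paths, so Remark~\ref{R1pathconsec} yields $\tilde{\lambda}_n - \tilde{\lambda}' = 1/(ee')$ and $\tilde{\lambda}'' - \tilde{\lambda}_n = 1/(ee'')$; dividing by $N_n$ gives the claimed equalities.

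For part (iii), write the shortest $1$-path from $\tilde{\lambda}_n$ to $0$ as $\tilde{\lambda}_n = a_0 > a_1 > \cdots > a_r = 0$ with $a_i = p_i/q_i$ in lowest terms, so that $q_0 = e$, $a_1 = \tilde{\lambda}'$, and $q_1 = e'$. By Proposition~\ref{Pprimespaths}(iii), $\tilde{\lambda}^* = a_j$ for some $j \geq 1$, whence $q_j = e^*$ and
\[
\tilde{\lambda}_n - \tilde{\lambda}^* \;=\; \frac{p_0 q_j - p_j q_0}{q_0 q_j} \;=\; \frac{M_{0,j}}{e\, e^*},
\]
where $M_{0,j} := p_0 q_j - p_j q_0 \in \ints_{\geq 1}$. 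Remark~\ref{R1pathconsec} gives $M_{0,1} = 1$. The crux is to show $M_{0,j} \geq 2$ for every $j \geq 2$: otherwise $a_0 - a_j = 1/(q_0 q_j)$ satisfies the $1$-path gap condition, and the proper subsequence $(a_0, a_j, a_{j+1}, \ldots, a_r)$ is itself a $1$-path from $\tilde{\lambda}_n$ to $0$ containing both endpoints, contradicting the shortest-path property. Consequently $\tilde{\lambda}_n - \tilde{\lambda}^* \geq 1/(e e^*)$ with equality exactly when $j = 1$, i.e., when $\tilde{\lambda}^* = \tilde{\lambda}'$, equivalently $\lambda^* = \lambda'$; dividing by $N_n$ finishes part (iii).

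Part (iv) is the mirror image, using the shortest $1$-path from $1$ to $\tilde{\lambda}_n$. The immediate predecessor of $\tilde{\lambda}_n$ there is $\tilde{\lambda}''$ by Proposition~\ref{Pprimespaths}(ii), and the same minimality argument, applied now by truncating the path to the right of $\tilde{\lambda}^{**}$, shows that the analogous determinant equals $1$ exactly when $\tilde{\lambda}^{**} = \tilde{\lambda}''$. Beyond the routine bookkeeping with the tildes, the only conceptual input is the minimality bound $M_{0,j} \geq 2$ for $j \geq 2$; this is what I expect to be the only nontrivial step, though it is really just a direct unpacking of the definition of \emph{shortest} $1$-path.
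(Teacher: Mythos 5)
Your proof is correct and follows essentially the same route as the paper: reduce to the shortest $1$-paths via the tilde normalization, read off parts (i) and (ii) from the consecutive-entry gap, and for (iii) and (iv) note that the difference is a positive integer multiple of $1/(ee^*)$ (resp.\ $1/(ee^{**})$) with equality ruled out for non-adjacent entries by the minimality in the definition of shortest $1$-path. Your explicit bound $M_{0,j}\geq 2$ via the proper-subsequence argument is just an unpacking of the paper's appeal to that same definition, so there is nothing substantively different to flag.
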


\begin{proof}
By Proposition~\ref{Pprimespaths}(i) and the definition of $1$-path, $\tilde{\lambda}_n - \tilde{\lambda}' =
1/(ee')$, from which part (i) follows.  Part (ii) follows similarly,
using Proposition~\ref{Pprimespaths}(ii).  To prove part (iii), note that
Proposition~\ref{Pprimespaths}(iii) shows that $\tilde{\lambda}^*$ is on
the shortest $1$-path from $\tilde{\lambda}_n$ to $0$, but that $\tilde{\lambda}^*$
does not directly follow $\tilde{\lambda}_n$ on this path unless
$\lambda^* = \lambda'$.  The definition of
shortest $1$-paths shows that $\tilde{\lambda}_n -
\tilde{\lambda}^* = 1/ee^*$ if and only if
$\lambda^* = \lambda'$.  Since $\tilde{\lambda}_n - \tilde{\lambda}^*$
is a multiple of $1/ee^*$ by common denominators, part (iii) follows. The proof
of part (iv) is exactly the same, using $\lambda^{**}$, $\lambda''$,
and Proposition~\ref{Pprimespaths}(iv) instead of $\lambda^*$,
$\lambda'$, and Proposition~\ref{Pprimespaths}(iii).\qedhere

\end{proof}

\begin{lemma}\label{Lpreindexrelations}
Let $v$, $v'$, $v''$, $v^{*}$, and $v^{**}$ be as above.
If $e$, $e'$, $e''$, $e^*$, and $e^{**}$ are defined as in Proposition
\ref{Pprimespaths}, then $e = e(v/v_{n-1})$, $e' = e(v'/v_{n-1})$,
$e'' = e(v''/v_{n-1})$, $e^* = e(v^*/v_{n-1})$, and $e^{**} = e(v^{**}/v_{n-1})$.
\end{lemma}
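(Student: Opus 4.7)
The plan is to reduce all five equalities to a single uniform number-theoretic computation. Each of $v, v', v'', v^{*}, v^{**}$ differs from the others only in the last value assigned to $\phi_n$: each has the form
\[
w = [v_0,\, v_1(\phi_1) = \lambda_1,\, \ldots,\, v_{n-1}(\phi_{n-1}) = \lambda_{n-1},\, w_n(\phi_n) = \mu]
\]
for the appropriate $\mu \in \{\lambda_n, \lambda', \lambda'', \lambda^{*}, \lambda^{**}\}$ (and in each case $\mu > v_{n-1}(\phi_n)$, so that the augmented valuation $w$ is well-defined). I would therefore prove once and for all the following claim: if $\mu = b/c$ is written in lowest terms and $\tilde{\mu} := N_n\mu - \lfloor N_n\mu \rfloor$, then the denominator of $\tilde{\mu}$ in lowest terms equals $e(w/v_{n-1})$.

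For the ramification-index side, I plan to invoke Remark~\ref{Rrelram} applied to $w$ to get $e(w/v_0) = \lcm(c_1, \ldots, c_{n-1}, c) = \lcm(N_n, c)$, using $N_n = \lcm(c_1, \ldots, c_{n-1})$ from Corollary~\ref{CvalueofN}. Since that same corollary yields $e(v_{n-1}/v_0) = N_n$, multiplicativity of ramification indices gives
\[
e(w/v_{n-1}) \;=\; \frac{e(w/v_0)}{e(v_{n-1}/v_0)} \;=\; \frac{\lcm(N_n, c)}{N_n} \;=\; \frac{c}{\gcd(N_n, c)}.
\]

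For the denominator of $\tilde{\mu}$, I would write $N_n b = q c + r$ with $0 \leq r < c$, so that $\tilde{\mu} = r/c$. Since $\gcd(b, c) = 1$ and $r \equiv N_n b \pmod{c}$, one has $\gcd(r, c) = \gcd(N_n b, c) = \gcd(N_n, c)$, so the denominator of $\tilde{\mu}$ in lowest terms is $c/\gcd(r,c) = c/\gcd(N_n, c)$, matching the formula above. (The degenerate case $r = 0$ corresponds to $\mu \in \Gamma_{v_{n-1}}$, where both sides equal $1$.) Applying the claim in turn to $\mu = \lambda_n, \lambda', \lambda'', \lambda^{*}, \lambda^{**}$ yields the five asserted equalities. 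The only real point is recognizing the common structure of the five cases; after that, only the elementary $\gcd/\lcm$ identity remains, so I do not anticipate a serious obstacle.
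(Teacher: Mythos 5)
Your argument is correct and is essentially the paper's own proof: both compute $e(w/v_0) = \lcm(N_n, c)$ for each of the five augmentations $w$ of $v_{n-1}$ (the paper by citing Obus--Wewers, you by Remark~\ref{Rrelram}), divide by $N_n = e(v_{n-1}/v_0)$, and identify the quotient $c/\gcd(N_n, c)$ with the denominator of $N_n\mu$, which is insensitive to whether one subtracts $\lfloor N_n\mu \rfloor$ or the paper's $\lfloor N_n\lambda_n \rfloor$. The only small inaccuracy is the parenthetical assertion that $\mu > v_{n-1}(\phi_n)$ in every case: when the shortest $N_n$-path below $\lambda_n$ has no intermediate entries, one gets $\lambda' = e(v_{n-1}/v_{n-2})\lambda_{n-1} = v_{n-1}(\phi_n)$ and $v' = v_{n-1}$ (e.g.\ $f$ Eisenstein, where $v_f' = v_0$), but then $e(v'/v_{n-1}) = 1$ and your $r = 0$ case already yields denominator $1$, so nothing breaks.
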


\begin{proof}
By construction, $e$ is the
denominator of $N_n\lambda_n$ (and similarly for $e'$, 
$e''$, $e^{*}$, and $e^{**}$).  By \cite[Lemma 5.3(ii)]{ObusWewers}, $e(v/v_0) = \lcm(N_n, c_n)$, where $c_n$ is the
denominator of $\lambda_n$.  By
\cite[Lemma A.6]{ObusWewers}, this is equal to $N_ne$.  Since $N_n =
e(v_{n-1}/v_0)$, we have $e = e(v/v_{n-1})$.  This proves the lemma for $e$,
and the proofs for $e'$, $e''$, $e^*$, and $e^{**}$ are identical.  
\end{proof}

\section{Some regular models of $\P^1$ attached to a polynomial}\label{Shorizontal}
Throughout \S\ref{Shorizontal}, we assume that the residue field $k$ of $K$ is \emph{algebraically closed}.

Let $\alpha \in \mc{O}_{\ol{K}}$ such that $\nu_K(\alpha) > 0$ and the
minimal polynomial $f(x) \in K[x]$ of $\alpha$ has degree at least
$2$. In this section, we first define a canonical Mac Lane valuation $v_f$ attached to $f$. We then define certain natural contractions of the minimal regular resolution of the $v_f$-model, 
called ``Type I'', ``Type II'', or ``Type III'' models. 
These are candidate models for the horizontal divisor $D_{\alpha}$ to
be regular on. We prove some technical results about these three kinds of models. These results will then be used in the next section to show that the minimal regular model on which $D_{\alpha}$ is regular is a special kind of Type I or Type II model. 


\subsection{The Mac Lane valuation associated to a polynomial}\label{Sregularmodelprelims}
Write
$$v_f = [v_0,\, v_1(\phi_1) = \lambda_1,\, \ldots,\, v_n(\phi_n) =
  \lambda_n] $$
  for the unique Mac Lane valuation on $K(x)$ over which $f$ is a proper
key polynomial (Proposition \ref{Pbestlowerapprox}(iv)).  As usual, write $v_0, v_1,\ldots, v_n = v_f$ for the intermediate
valuations.  For $1 \leq i \leq n$,
write $\lambda_i = b_i/c_i$ in lowest terms.  Let $N_i = \lcm(c_1,
\ldots, c_{i-1}) = \deg(\phi_i)$ (Corollary \ref{CvalueofN}).
Furthermore, pick once and for all a root $\alpha$ of $f$.

\begin{remark}\label{Rrelatephitotrunc} If the roots of $f$ generate a tame extension, it is easy to read off the polynomials $\phi_i$ and integers $\lambda_i$ from the truncations of Newton-Puiseux expansions of the roots of $f$ with respect to some choice of uniformizer $t$, as we now explain. Using Proposition~\ref{Pbestlowerapprox}(iii), we see that we can take $\phi_i$ to be the minimal polynomials of the truncations of the Newton-Puiseux expansions just before there is a jump in the lcm of the denominators of the exponents in the expansion. If $\alpha$ is a root of $f$, then Corollary~\ref{Cpseudoeval} shows that $\lambda_i = \nu_K(\phi_i(\alpha)) = \sum_{\phi_i(\beta)=0} \nu_K(\alpha-\beta)$. If $\deg(\phi_i)=m$, then the Galois group of the splitting field of the tame extension generated by the roots of $\phi_i$ is generated by the automorphism $t^{1/m} \mapsto \zeta_m t^{1/m}$ for a primitive $m^{\mathrm{th}}$ root of unity $\zeta_m$. Since the induced $\Z/m\Z$-action on the roots of $\phi_i$ is transitive, a direct computation then shows that for each root $\beta$ of $\phi_i$, the quantity $\nu_K(\alpha-\beta)$ is equal to one of the the exponents in $\alpha$ where the lcm of the denominators of the exponents jumps. (This is the content of \cite[Lemma~8.13]{PadmaTame} using the language of characteristic/jump exponents.)

For example, let $K=\mathbb{C}((t))$ and let $f$ be the minimal
polynomial of $2t-t^{5/2}+t^{8/3}-3t^{7/2}+t^{23/6}$. Then $v_f$ has
the form $$v_f = [v_0,\, v_1(\phi_1) = \lambda_1,v_2(\phi_2) =
\lambda_2] ,$$ and we can take $\phi_1=x-2t$ and $\phi_2$ to be the
minimal polynomial of $2t-t^{5/2}$, with $\lambda_1=5/2$ and $\lambda_2=5/2+8/3$. This example also shows that $\deg(\phi_i)$ and the invariants $\lambda_i$ contain the same information as the characteristic exponents of the Newton-Puiseux expansion of a root of $f$ as in \cite[Example~8.13]{PadmaTame} in the tame case.
\end{remark}

For the rest of this section we will use the following notation.  
\begin{notation}\label{Nmaclane}
Lemma~\ref{Lvnvprime} implies that we are in the situation of
\S\ref{Scontractions}. Like in  \S\ref{Scontractions}, let
\begin{itemize}
\item $v_f' = [v_0,\, v_1(\phi_1) = \lambda_1,\, \ldots,\,
  v_{n-1}(\phi_{n-1}) = \lambda_{n-1},\, v_n'(\phi_n) = \lambda']$
\item   $v_f'' = [v_0,\, v_1(\phi_1) = \lambda_1,\, \ldots,\,
  v_{n-1}(\phi_{n-1}) = \lambda_{n-1},\, v_n'(\phi_n) = \lambda'']$
\end{itemize}
be the successor and precursor valuations to $v_f$, respectively.  
\end{notation}

For simplicity, we write $e = e(v_f/v_{n-1})$, $e' = e(v_f'/v_{n-1})$,
and $e'' = e(v_f''/v_{n-1})$. This is consistent with the notation in
Lemma \ref{Lpreindexrelations} and Proposition
\ref{Pprimespaths}. We record for later usage that $e =
\deg(f)/\deg(\phi_n)$ by Lemma \ref{Lfdegree}(ii). With this notation, we are ready to state the main result of this paper. We postpone the proof to Section~\ref{Sproof}.

\begin{theorem}\label{Thorizontalregular}
Let $f \in \mc{O}_K[x]$ be a monic irreducible polynomial of degree
$\geq 2$, and let
$\mc{X}$ be the $v_0$-model of $\proj^1_K$.  Let $v_f$ be the unique
Mac Lane valuation over which $f$ is a key polynomial, and let $v_f'$
and $v_f''$ be the valuations defined in Notation~\ref{Nmaclane}.  For any Mac
Lane valuation $v$, let $\mc{Y}_{v, 0}^{\reg}$ be defined as in Notation~\ref{Nyfprime}.

\begin{enumerate}[\upshape (i)]
\item If $e(v_f'/v_0) \leq e(v_f''/v_0)$, then the minimal embedded
  resolution of $(\mc{X}, \divi_0(f))$ is $c \colon \mc{Y}_{v_f', 0}^{\reg} \to \mc{X}$, where $c$ is the canonical contraction from Notation~\ref{Nyfprime}.
\item If $e(v_f'/v_0) > e(v_f''/v_0)$, then the minimal embedded
  resolution of $(\mc{X}, \divi_0(f))$ is $c \colon \mc{Y}_{v_f'', 0}^{\reg}
  \to \mc{X}$, where $c$ is the canonical contraction from Notation~\ref{Nyfprime}.
\end{enumerate}
\end{theorem}

We now give two basic examples illustrating Theorem~\ref{Thorizontalregular}.

\begin{example}\label{E38res}
  If $f = x^8 - \pi_K^3$, then $v_f = [v_0, \, v_1(x) = 3/8]$.  As in Example~\ref{Efarey}, the shortest $1$-path from $1$ to $3/8$ is given by $1 > 1/2 > 3/8$ and the shortest $1$-path from $3/8$ to $0$ is given by $3/8 > 1/3 > 0$, yielding $v_f' = [v_0,\, v_1(x) = 1/3]$ and $v_f'' = [v_0,\, v_1(x) = 1/2]$.  Since $e(v_f'/v_0) = 3$ and $e(v_f''/v_0) = 2$, part (ii) of Theorem~\ref{Thorizontalregular} applies, and the minimal embedded resolution of $(\mc{X}, \divi_0(f))$ is $\mc{Y}_{v_f'', 0}^{\reg} \to \mc{X}$.
\end{example}

\begin{remark}
In Example~\ref{E38res}, applying Proposition~\ref{Pgeneralresolution} shows that $\mc{Y}_{v_f'', 0}^{\reg}$ includes the valuations $v_{\lambda} := [v_0,\, v_1(x) = \lambda]$, for $\lambda \in \{0, 1/2, 1\}$.  In particular, there exist $\lambda$ both greater than and less than $3/8$ for which $v_{\lambda}$ is included.  By Corollary~\ref{Cannulus}, this implies that $D_{\alpha}$, for $\alpha$ a root of $f$, specializes to the intersection of two components (the ones corresponding to $\lambda = 0$ and $\lambda = 1/2$).  This property makes $\mc{Y}_{v_f'', 0}$ a prototype for what we will call a ``Type I model'' in the sequel.  In particular, $v_{1/2}$ is one of the $v_{1, \lambda}$ and $v_{0}$ is one of the $w_{0, \lambda}$; see Definition~\ref{D3types}(i).
\end{remark}

\begin{example}\label{Estandardregular}
If $f$ is Eisenstein, then $v_f = [v_0,\, v_1(x) = 1/\deg(f)]$.  The shortest $1$-path from $1/\deg(f)$ to $0$ is given by $1/\deg(f) > 0$ and the shortest $1$-path from $1$ to $1/\deg(f)$ is given by $1 > 1/2 > \cdots > 1/(\deg(f) - 1) > 1/\deg(f)$.  So $v_f' = [v_0,\, v_1(x) = 0] = v_0$, and $v_f'' = [v_0,\, v_1(x) = 1/(\deg(f) - 1)]$.  Since $e(v_f'/v_0) = 1$ and $e(v_f''/v_0) = \deg(f) - 1$, part (i) of Theorem~\ref{Thorizontalregular} applies, and the minimal embedded resolution of $(\mc{X}, \divi_0(f))$ is $\mc{Y}_{v_f', 0}^{\reg} \to \mc{X}$.  But $\mc{Y}_{v_f', 0}^{\reg} = \mc{Y}_{v_0, 0}^{\reg} = \mc{X}$, so this recovers the easy-to-verify fact that if $\alpha$ is a root of $f$, then $D_{\alpha}$ is regular on the $v_0$-model $\mc{X}$ of $\proj^1_K$.
\end{example}  

\begin{remark}
In Example~\ref{Estandardregular}, since $\mc{Y}_{v_f', 0}^{\reg} = \mc{X}$ has irreducible special fiber, $D_{\alpha}$, for $\alpha$ a root of $f$, specializes to only one irreducible component.   This property makes $\mc{Y}_{v_f'', 0}$ a prototype for what we will call a ``Type II model'' in the sequel.  In particular, $v_0$ is one of the $w_{0, \lambda}$; see Definition~\ref{D3types}(ii).
\end{remark}

\subsection{The model $\mc{Y}_{v_f}^{\reg}$ and its contractions}\label{Sgoodmodel}

As we start contracting components in $\mc{Y}_{v_f}^{\reg}$ to identify the \textit{minimal} embedded resolution of the the pair $(\P^1_{\mc{O}_K}, \divi_0(f))$, we go through an intermediate sequence of regular models of $\P^1_K$ that naturally breaks up into three types (Definition~\ref{D3types}), based on the specialization behaviour of $D_{\alpha}$ (Proposition~\ref{Palphaspecialize}). To understand whether $D_{\alpha}$ is regular on these contractions, we also need to understand where some closely related divisors specialize on each of these three types of models (Corollary~\ref{Cnophii}). The goal of the rest of the subsection is to prove Proposition~\ref{Pfhorizontal}, which lets us write down an explicit function that cuts out the divisor $D_{\alpha}$ on each of these three types of models -- the forms of the explicit functions look different in each of these three cases, hence the subdivision. In \S\ref{Sproof} we will finally use these explicit functions to understand the regularity of $D_{\alpha}$ on each of these three types of models. We will show that the minimal embedded resolution has to be one of the Type I or Type II models, and $D_{\alpha}$ is not regular on the unique Type III model.

We use the notation of Proposition~\ref{Pgeneralresolution} and Figure~\ref{Fresolutiongraph}.

\begin{defn}\label{D3types}
  Fix $f$ as in this section.  The Type I, II, and III models below
  implicitly depend on $f$. 
  \begin{itemize}
  \item  A \emph{Type I} model of $\proj^1_K$ is any regular contraction of $\mc{Y}_{v_f}^{\reg}$
    that includes at least one of the $v_{n, \lambda}$ and one of the
    $w_{n-1, \lambda}$, but does not
    include $v_f$.
  \item A \emph{Type II} model of $\proj^1_K$ is any regular contraction of
    $\mc{Y}_{v_f}^{\reg}$ that does not include $v_f$ or any $v_{n,
      \lambda}$, but does include at least one of the $w_{n-1,
      \lambda}$.
  \item Assuming that $\mc{Y}_{v_f}^{\reg}$ includes at least one valuation other than $v_f$, the $v_{n, \lambda}$, and the $w_{n-1, \lambda}$, we define \textbf{the} \emph{Type III} model of $\proj^1_K$ to be the model where the
    $v_f$-component is contracted, as well as all the $v_{n, \lambda}$
    and the $w_{n-1, \lambda}$.
  \end{itemize}
\end{defn}

\begin{remark}
  Since $v_{n-1}$ is one of the $w_{n-1, \lambda}$, one sees that the Type III model is the contraction of the $v_{n-1}$-component in
  $\mc{Y}_{v_{n-1}}^{\reg}$.  
\end{remark}

\begin{defn}\label{Dstar} \hfill
\begin{itemize}
\item  Given a Type I or Type II model $\mc{Y}$, define $$v_f^* =
  [v_{n-1}, v_f^*(\phi_n) = \lambda^*] = w_{n-1,
  \lambda^*},$$ where $\lambda^*$ is maximal such that $w_{n-1,
  \lambda^*}$ is included in
$\mc{Y}$.  
\item  Given a Type I model $\mc{Y}$, define $$v_f^{**} =
  [v_{n-1}, v_f^{**}(\phi_n) = \lambda^{**}] = v_{n,
  \lambda^{**}},$$ where $\lambda^{**}$ is minimal such that $v_{n,
  \lambda^{**}}$ is included in
$\mc{Y}$.
\item If $v_f^*$ (resp.\ $v_f^{**}$) is defined, define $e^*$ (resp.\
  $e^{**}$) to be the denominator of $N_n\lambda^*$ (resp.\
  $N_n\lambda^{**}$). Note that this notation is consistent with that
  of Proposition~\ref{Pprimespaths}.  
\item Given a Type III model $\mc{Y}$, define $v_{n-1}'$ and
  $v_{n-1}''$ to be the successor and precursor valuations to
  $v_{n-1}$, respectively.
\end{itemize}
\end{defn}

\begin{remark}
 Note that the $v_f^*$ and
$v_f^{**}$-components of $\proj^1_K$ intersect using
Proposition~\ref{Pgeneralresolution} and Remark~\ref{Radjacencyres}.
\end{remark}

\begin{remark}
By Lemma~\ref{Lpreindexrelations}, $e^* = e(v_f^*/v_{n-1})$ and $e^{**}
= e(v_f^{**}/v_{n-1})$.
\end{remark}

\subsubsection{Specializations of horizontal divisors}
\begin{lemma}\label{Luniquespec}
On the model $\mc{Y}_{v_f}^{\reg}$ the only component of the special fiber
that $D_{\alpha}$ meets is the $v_f$-component.
\end{lemma}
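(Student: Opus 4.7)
The plan is to reduce to the $v_f$-model $\mc{Y}_{v_f}$ and combine Proposition~\ref{Psmallerdiskoid} with the dual-graph description of $\mc{Y}_{v_f}^{\reg}$ from Proposition~\ref{Pgeneralresolution} and Remark~\ref{Radjacencyres}.  The first step is to note that, since $\phi_n$ has degree strictly less than $\deg(f)$ and $f$ is a proper key polynomial over $v_f$, Corollary~\ref{Cpseudoeval} applied to the root $\alpha$ of $f$ gives $\nu_K(\phi_n(\alpha)) = \lambda_n$.  Applying Proposition~\ref{Psmallerdiskoid} to $\mc{Y}_{v_f}$ with the coordinate $y$ from Lemma~\ref{LA1coordinate} then shows that $D_{\alpha}$ specializes on $\mc{Y}_{v_f}$ to a closed point $P$ of the $v_f$-component with $y(P) = a$ for some $a \in k \setminus \{0\}$.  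In particular, $P \notin \{y = 0,\, y = \infty\}$.

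The next step is to show that the minimal resolution morphism $\pi \colon \mc{Y}_{v_f}^{\reg} \to \mc{Y}_{v_f}$ is an isomorphism above $P$.  By Remark~\ref{Radjacencyres} and Figure~\ref{Fresolutiongraph}, the $v_f$-vertex of the dual graph of $\mc{Y}_{v_f}^{\reg}$ is adjacent only to $v'$ and $v''$, and every other valuation included in $\mc{Y}_{v_f}^{\reg}$ lies in one of the two chains emanating from $v_f$ through $v'$ or $v''$.  Each of these chains contracts in $\mc{Y}_{v_f}$ to a single point of the $v_f$-component, and since $v' \prec v_f$ (so $D_{v'} \supsetneq D_{v_f}$) while $v'' \succ v_f$ (so $D_{v''} \subsetneq D_{v_f}$), a second application of Proposition~\ref{Psmallerdiskoid} identifies these two contraction points as $y = \infty$ and $y = 0$ respectively.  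Thus the entire exceptional locus of $\pi$ lies over the two-point set $\{y = 0,\, y = \infty\}$.

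Combining these observations, the strict transform of $D_{\alpha}$ on $\mc{Y}_{v_f}^{\reg}$ specializes to the unique preimage of $P$ under $\pi$, which lies on the $v_f$-component and on no other component.  Since $K$ is henselian with algebraically closed residue field, $D_{\alpha}$ is irreducible and meets the special fiber in a single closed point, so this unique specialization point is the only intersection of $D_{\alpha}$ with the special fiber.  No step here is especially delicate; the only point requiring a brief check is the identification of which of $\{y = 0, y = \infty\}$ corresponds to the $v'$-chain versus the $v''$-chain, and that is immediate from Proposition~\ref{Psmallerdiskoid} once one recalls the direction of the inequality between $\preceq$ on Mac Lane valuations and inclusion of diskoids given by Proposition~\ref{Pvaldiskoid}.
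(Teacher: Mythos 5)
Your proposal is correct, and the first half coincides with the paper's: both use Corollary~\ref{Cpseudoeval} to get $\nu_K(\phi_n(\alpha)) = \lambda_n$ and Proposition~\ref{Psmallerdiskoid} to place the specialization of $D_{\alpha}$ on the $v_f$-model away from the two points $y=0$ and $y=\infty$. Where you diverge is in why the resolution $\pi \colon \mc{Y}_{v_f}^{\reg} \to \mc{Y}_{v_f}$ does not disturb that specialization point: the paper cites an external regularity statement (\cite[Lemma~7.3(iii)]{ObusWewers}, together with \cite[Lemma~7.1]{ObusWewers}) to say that the specialization point $z$ is a \emph{regular} point of $\mc{Y}_{v_f}$, hence cannot lie in the base locus of the minimal resolution; you instead use the explicit dual-graph description of $\mc{Y}_{v_f}^{\reg}$ (Proposition~\ref{Pgeneralresolution}, Remark~\ref{Radjacencyres}) to see that the exceptional locus of $\pi$ consists of two connected subtrees hanging off the $v_f$-component, each contracting to a single point, and then argue those two contraction points are exactly $y=\infty$ and $y=0$. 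Your route stays entirely inside the paper's machinery and gives a more self-contained picture, at the cost of one step that is stated too tersely: Proposition~\ref{Psmallerdiskoid} by itself speaks only about specializations of horizontal divisors $D_{\beta}$, not about images of contracted vertical components, so to pin down the contraction points you should introduce auxiliary divisors --- e.g.\ choose $\beta$ with $\lambda_n < \nu_K(\phi_n(\beta)) < \lambda''$, use Corollary~\ref{Cannulus} to see that $D_{\beta}$ passes through the intersection of the $v_f$- and $v''$-components on $\mc{Y}_{v_f}^{\reg}$ (a point of the contracted subtree), and then apply Proposition~\ref{Psmallerdiskoid} on $\mc{Y}_{v_f}$ together with the uniqueness of the closed point of $D_{\beta}$ to conclude the $v''$-side contracts to $y=0$, and symmetrically for the $v'$-side. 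With that small completion (or even with the weaker observation that the two contraction points are independent of $\alpha$ and are hit by divisors with $\nu_K(\phi_n(\beta)) \neq \lambda_n$, hence differ from $P$), your argument is a valid alternative proof; the paper's version is shorter but leans on the cited regularity result rather than on the structure of the resolution itself.
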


\begin{proof}
The multiplicity of the $v_f$-component of $\mc{Y}_{v_f}^{\reg}$
in the special fiber is $e(v_n/v_{n-1})e(v_{n-1}/v_0)$.  But $e(v_n/v_{n-1}) = \deg(f)/\deg(\phi_n)$ by
Lemma \ref{Lfdegree}(ii) and $e(v_{n-1}/v_0) = \deg(\phi_n)$ by Corollary~\ref{CvalueofN}.  So the
multiplicity is equal to $\deg(f)$.

By Proposition~\ref{Cpseudoeval},
$v_f(\phi_n(\alpha)) = \lambda_n$.  So by
\cite[Lemma~7.3(iii)]{ObusWewers} and
Proposition~\ref{Psmallerdiskoid}, $D_{\alpha}$ intersects a regular
point $z$ on
the $v_f$-model of $\proj^1_K$, which is also a smooth point of the
reduced special fiber by \cite[Lemma~7.1]{ObusWewers}. By the previous
line, we conclude that the point $z$ is not part of the base locus of
the projection $\mc{Y}_{v_f}^{\reg} \to \mc{Y}_{v_f}$, and this proves the lemma. 
\end{proof}

\begin{lemma}\label{Lspecializations}
  Let $y$ be a point on the $v_f$-component of $\mc{Y}_{v_f}^{\reg}$.
  \begin{enumerate}[\upshape (i)]
  \item Suppose $\mc{Y}$ is a Type I model, and $\tau \colon
    \mc{Y}_{v_f}^{\reg} \to \mc{Y}$ is the standard contraction.  Then
    $\tau(y)$ lies on the intersection of the $v_f^*$- and $v_f^{**}$-components of $\mc{Y}$. 
  \item Suppose $\mc{Y}$ is a Type II model, and $\tau \colon
    \mc{Y}_{v_f}^{\reg} \to \mc{Y}$ is the standard contraction.  Then
    $\tau(y)$ lies only on the $v_f^*$-component of $\mc{Y}$. 
  \item  Suppose $\mc{Y}$ is the Type III model, and $\tau \colon
    \mc{Y}_{v_f}^{\reg} \to \mc{Y}$ is the standard contraction.  Then
    $\tau(y)$ lies on the intersection of the $v_{n-1}'$- and
    $v_{n-1}''$-components of $\mc{Y}$.   
  \end{enumerate}

\end{lemma}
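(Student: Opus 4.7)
The plan is to leverage a standard fact about birational morphisms of regular arithmetic surfaces, combined with the explicit dual graph of $\mc{Y}_{v_f}^{\reg}$ shown in Figure~\ref{Fresolutiongraph}. The fact I will use is that if $\tau\colon X\to Y$ is a birational morphism of regular arithmetic surfaces and $E$ is a maximal connected union of special-fiber components of $X$ all contracted by $\tau$, then $\tau(E)$ is a single closed point $P$ of $Y$, and a special-fiber component $C$ of $Y$ passes through $P$ if and only if the strict transform of $C$ in $X$ meets $E$. Since the $v_f$-component is contracted in all three cases and $y$ lies on it, $\tau(y)$ coincides with the image of this component. Once the connected contracted region $E$ containing the $v_f$-component is identified, reading off its neighbors in the dual graph of $\mc{Y}_{v_f}^{\reg}$ yields the claim.

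I would begin by recalling from Proposition~\ref{Pgeneralresolution} (via Remark~\ref{Radjacencyres} and Figure~\ref{Fresolutiongraph}) that the dual graph of $\mc{Y}_{v_f}^{\reg}$ is a tree in which the $v_f$-vertex has exactly two neighbors: the closest $w_{n-1,\lambda}$-vertex (labeled $v'$ in the figure) and the closest $v_{n,\lambda}$-vertex (labeled $v''$). The $v_{n,\lambda}$-chain terminates at the leaf $v_{n,\beta_n}$, while the $w_{n-1,\lambda}$-chain extends through $v_{n-1}$ into the rest of the tree.

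For a Type I model, I would use the definitions of $v_f^*$ and $v_f^{**}$ to identify $E$ as precisely the chain running from (but not including) $v_f^*$, through the contracted $w_{n-1,\lambda}$-components and $v_f$, down to (but not including) $v_f^{**}$. By construction this chain's only surviving neighbors in $\mc{Y}$ are $v_f^*$ and $v_f^{**}$, so $P$ is the intersection $v_f^*\cap v_f^{**}$. For a Type II model, no $v_{n,\lambda}$ survives, so $E$ extends all the way up to the leaf $v_{n,\beta_n}$ on one side and still stops just short of $v_f^*$ on the other; since the leaf end contributes no surviving neighbor, $P$ lies only on $v_f^*$. For the Type III model, all $v_{n,\lambda}$'s, all $w_{n-1,\lambda}$'s, and $v_{n-1}$ itself are contracted (the last by the remark following Definition~\ref{D3types} identifying the Type III model with the contraction of $v_{n-1}$ in $\mc{Y}_{v_{n-1}}^{\reg}$), and the only neighbors of this larger $E$ surviving in $\mc{Y}$ are $v_{n-1}'$ and $v_{n-1}''$, giving $P = v_{n-1}'\cap v_{n-1}''$.

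The main obstacle will be careful bookkeeping: in each case I must verify that no contracted component inadvertently extends $E$ to meet an unintended surviving component, and that no surviving component adjacent to $E$ is overlooked. Both tasks follow directly from the tree structure of Figure~\ref{Fresolutiongraph} together with the definitions of $v_f^*$, $v_f^{**}$, and of the three model types in Definition~\ref{D3types}, since the contracted region near $v_f$ is confined to the two branches emanating from $v_f$, and these branches are precisely delimited on the $\mc{Y}$ side by $v_f^*$ and (when applicable) $v_f^{**}$ or, for Type III, by $v_{n-1}'$ and $v_{n-1}''$.
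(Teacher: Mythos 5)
Your proposal is correct and is essentially the paper's own argument: the paper proves the lemma by appealing to Remark~\ref{Radjacencyres} and the tree structure in Figure~\ref{Fresolutiongraph}, which is exactly the dual-graph bookkeeping you carry out (you merely make explicit the standard fact that the surviving components through the image point of a connected contracted chain are its neighbors in the dual graph). You also correctly handle the one subtle point, namely that the Type III model contracts $v_{n-1}$ as well, via the remark following Definition~\ref{D3types}.
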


\begin{proof}
This follows from Remark~\ref{Radjacencyres} and Figure~\ref{Fresolutiongraph}.
\end{proof}

\begin{prop}\label{Palphaspecialize}
Let $\alpha$, $f$, $v_f$, $v_f^*$, $v_f^{**}$, $v_{n-1}'$, and $v_{n-1}''$ be as in this section.
  \begin{enumerate}[\upshape (i)]
     \item If $\mc{Y}$ is a Type I model of $\proj^1_K$, then the divisor
        $D_{\alpha}$ on $\mc{Y}$ meets the intersection of the two components of
        the special fiber corresponding to $v_f^*$ and $v_f^{**}$.
      \item If $\mc{Y}$ is a Type II model of $\proj^1_K$, then the
        divisor $D_{\alpha}$ on $\mc{Y}$ intersects only the $v_f^*$-component of the special fiber. 
     \item If $\mc{Y}$ is the Type III model of $\proj^1_K$, then the
       divisor $D_{\alpha}$ on $\mc{Y}$ meets the intersection of the
       two components of the special fiber corresponding to $v_{n-1}'$ and $v_{n-1}''$.
      \end{enumerate}
\end{prop}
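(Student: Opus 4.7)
The plan is to reduce all three cases to the previously established Lemmas~\ref{Luniquespec} and \ref{Lspecializations}. The key observation is that in each case the model $\mc{Y}$ is a regular contraction of $\mc{Y}_{v_f}^{\reg}$, so there is a birational morphism $\tau \colon \mc{Y}_{v_f}^{\reg} \to \mc{Y}$ that contracts precisely the components of $\mc{Y}_{v_f}^{\reg}$ not appearing in $\mc{Y}$; in particular, the $v_f$-component is always contracted. Since $D_\alpha$ is horizontal and irreducible and $\tau$ is birational, the closure of the generic point in $\mc{Y}$ equals the image $\tau(D_\alpha^{\mc{Y}_{v_f}^{\reg}})$, so the specialization of $D_\alpha$ on $\mc{Y}$ is computed by pushing forward the specialization on $\mc{Y}_{v_f}^{\reg}$ under $\tau$.

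First, I would invoke Lemma~\ref{Luniquespec} to locate the specialization of $D_\alpha$ on $\mc{Y}_{v_f}^{\reg}$: it meets the special fiber at a single closed point $y$ lying on the $v_f$-component. Then the specialization of $D_\alpha$ on $\mc{Y}$ is exactly the image $\tau(y)$.

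Next, I would apply Lemma~\ref{Lspecializations} to identify $\tau(y)$ in each case. For part (i), $\mc{Y}$ is Type I, so Lemma~\ref{Lspecializations}(i) gives that $\tau(y)$ is the intersection point of the $v_f^*$- and $v_f^{**}$-components, proving the claim. For part (ii), $\mc{Y}$ is Type II, so Lemma~\ref{Lspecializations}(ii) gives that $\tau(y)$ lies only on the $v_f^*$-component. For part (iii), $\mc{Y}$ is the Type III model, and Lemma~\ref{Lspecializations}(iii) gives that $\tau(y)$ is the intersection point of the $v_{n-1}'$- and $v_{n-1}''$-components.

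The main (and only) subtlety is justifying that specialization on the contraction is computed by applying $\tau$ to the specialization upstairs. This is a standard fact about birational morphisms of proper arithmetic surfaces: the horizontal prime divisor $D_\alpha$ on $\mc{Y}$ is simply $\tau_*$ applied to $D_\alpha$ on $\mc{Y}_{v_f}^{\reg}$, and its intersection with the special fiber is the scheme-theoretic image under $\tau$ of $D_\alpha^{\mc{Y}_{v_f}^{\reg}} \cap (\mc{Y}_{v_f}^{\reg})_s$. Since this intersection consists of the single point $y$ by Lemma~\ref{Luniquespec}, there is nothing further to verify. Thus the proposition reduces entirely to the combination of Lemmas~\ref{Luniquespec} and \ref{Lspecializations}, with no additional calculation required.
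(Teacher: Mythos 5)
Your proof is correct and is essentially identical to the paper's: the paper likewise cites Lemma~\ref{Luniquespec} to place the specialization of $D_{\alpha}$ on the $v_f$-component of $\mc{Y}_{v_f}^{\reg}$ and then applies the corresponding parts of Lemma~\ref{Lspecializations} to the contraction. Your extra remark justifying that specialization commutes with the contraction (single closed point, pushforward under $\tau$) is a harmless elaboration of what the paper leaves implicit.
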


\begin{proof}
By Lemma~\ref{Luniquespec}, $D_{\alpha}$ meets the special fiber of
$\mc{Y}_{v_f}^{\reg}$ only on the $v_f$-component.  Parts (i), (ii),
and (iii) of the proposition now follow from the respective parts of
Lemma~\ref{Lspecializations}.
%
%
\end{proof}

\begin{corollary}\label{Cnophii}
Let $\mc{Y}$ be a Type I or Type II model of $\proj^1_K$.  Let
$\alpha_n$ be a root of $\phi_n$.
  \begin{enumerate}[\upshape (i)]
  \item Suppose $\beta \in \ol{K}$ has degree less than $\deg(\phi_n)$
    over $K$.  Then $D_{\alpha}$ and
$D_{\beta}$ do not meet on the special fiber of $\mc{Y}$.
\item If $\mc{Y}$ is Type I, then $D_{\alpha}$ and
  $D_{\alpha_n}$ do not meet on the special fiber of $\mc{Y}$.
\item If $\mc{Y}$ is Type II or Type III, then $D_{\alpha}$ and $D_{\alpha_n}$
  meet on the special fiber of $\mc{Y}$.
\end{enumerate}
\end{corollary}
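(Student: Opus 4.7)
The plan is to treat the three parts with three different tools from Section~\ref{Smaclanemodels}: Corollary~\ref{Cdontmeet} handles (i), Corollary~\ref{Cannulus} handles (ii), and direct analysis via Proposition~\ref{Psmallerdiskoid} handles (iii).

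For part (i), I would apply Corollary~\ref{Cdontmeet} to $v = v_f^*$, which is by construction included in the Type I or Type II model $\mc{Y}$. We have $\nu_K(\phi_n(\alpha)) = \lambda_n$ by Corollary~\ref{Cpseudoeval}, and $\lambda^*$ lies strictly in the open interval $(v_{n-1}(\phi_n),\, \lambda_n)$ by the description of the $w_{n-1,\lambda}$-chain in Proposition~\ref{Pgeneralresolution}. So the application reduces to the auxiliary inequality
\[
\nu_K(\phi_n(\beta)) \leq v_{n-1}(\phi_n) < \lambda^* < \lambda_n = \nu_K(\phi_n(\alpha)).
\]
To prove this inequality, let $g$ denote the minimal polynomial of $\beta$, of degree $d < N_n = \deg(\phi_n)$. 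A standard resultant identity together with Galois invariance of $\nu_K$ gives
\[
d \cdot \nu_K(\phi_n(\beta)) = \sum_j \nu_K(\phi_n(\beta^{(j)})) = \sum_i \nu_K(g(\alpha_n^{(i)})),
\]
where the $\beta^{(j)}$ and $\alpha_n^{(i)}$ range over the Galois conjugates of $\beta$ and $\alpha_n$. Since $\phi_n$ is a key polynomial over $v_{n-1}$ and $\deg(g) < \deg(\phi_n)$, Corollary~\ref{Cpseudoeval} applied to each $\alpha_n^{(i)}$ collapses the right-hand side to $N_n \cdot v_{n-1}(g)$. So $\nu_K(\phi_n(\beta)) = (N_n/d)\, v_{n-1}(g)$, and the desired inequality is equivalent to the ``density bound'' $v_{n-1}(g)/d \leq \lambda_{n-1}/N_{n-1}$ for monic polynomials of positive degree. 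This density bound is the main technical obstacle; I would prove it by induction on $n$, taking the $\phi_{n-1}$-adic expansion $g = \sum a_i \phi_{n-1}^i$ with $\deg(a_i) < N_{n-1}$, bounding $v_{n-1}(g)$ by the valuation of the leading term, and using the monotonicity $\lambda_{i}/N_{i} < \lambda_{i+1}/N_{i+1}$, which follows from $\lambda_{i+1} > v_i(\phi_{i+1}) = (N_{i+1}/N_i)\lambda_i$ by Lemma~\ref{Lfdegree}.

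For part (ii), I would apply Corollary~\ref{Cannulus} with $v = v_f^{**}$ and $v' = v_f^*$, both included in the Type I model $\mc{Y}$, and noted to intersect by the remark after Definition~\ref{Dstar}. Proposition~\ref{Palphaspecialize}(i) places $D_\alpha$ at the intersection of the $v_f^*$- and $v_f^{**}$-components. Corollary~\ref{Cannulus} asserts that $D_{\alpha_n}$ meets this intersection if and only if $\lambda^* < \nu_K(\phi_n(\alpha_n)) < \lambda^{**}$, which fails since $\nu_K(\phi_n(\alpha_n)) = \infty > \lambda^{**}$. Hence $D_{\alpha_n}$ specializes to some other point of $\mc{Y}_s$, and in particular does not meet $D_\alpha$.

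For part (iii), on a Type II model Proposition~\ref{Palphaspecialize}(ii) says $D_\alpha$ specializes only to the $v_f^*$-component. Applying Proposition~\ref{Psmallerdiskoid} to $v_f^*$ (using the coordinate $y$ from Lemma~\ref{LA1coordinate}), both $\nu_K(\phi_n(\alpha)) = \lambda_n$ and $\nu_K(\phi_n(\alpha_n)) = \infty$ strictly exceed $\lambda^*$, so both $D_\alpha$ and $D_{\alpha_n}$ specialize to the point $y = 0$ on the $v_f^*$-component of $\mc{Y}_{v_f^*}$. On the Type II model this remains a smooth point of the $v_f^*$-component: the components that would attach there in $\mc{Y}_{v_f}^{\reg}$ (all $v_{n,\lambda}$ and the $w_{n-1,\lambda}$ with $\lambda > \lambda^*$) are contracted. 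Hence $D_\alpha$ and $D_{\alpha_n}$ both specialize to this common point and meet. The Type III case is parallel: by Proposition~\ref{Palphaspecialize}(iii), $D_\alpha$ specializes to the intersection of the $v_{n-1}'$- and $v_{n-1}''$-components (the image of the contracted $v_{n-1}$-component), and the same argument applied to $\phi_n$ as a proper key polynomial over $v_{n-1}$ (in particular Lemma~\ref{Luniquespec} for $\phi_n$) shows $D_{\alpha_n}$ specializes to the same image point, so they meet.
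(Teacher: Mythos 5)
Your proposal is correct, and parts (ii) and (iii) run essentially parallel to the paper: part (ii) is the paper's argument verbatim (Corollary~\ref{Cannulus} plus Proposition~\ref{Palphaspecialize}(i)), and your Type II argument in (iii) is the paper's Proposition~\ref{Psmallerdiskoid} argument, with your ``everything attached above $v_f^*$ is contracted'' remark playing the role of the paper's observation that the common specialization point is not a base point of $\mc{Y} \to \mc{Y}_{v_f^*}$ (the paper then disposes of Type III in one line, noting it is a contraction of a Type II model, which is a bit lighter than your appeal to Lemma~\ref{Luniquespec} applied to $\phi_n$ over $v_{n-1}$, though that route also works). The genuine divergence is part (i). The paper computes that the $v_f^*$-component has multiplicity $N_n e^* \geq \deg(\phi_n)$ and invokes the Liu--Lorenzini specialization lemma to conclude $D_\beta$ cannot even specialize to that component; you instead prove the valuation-theoretic bound $\nu_K(\phi_n(\beta)) \leq v_{n-1}(\phi_n)$ via the resultant identity and Corollary~\ref{Cpseudoeval}, reduce to the density bound $v_{n-1}(g) \leq (d/N_{n-1})\lambda_{n-1}$ for monic $g$ of degree $d$, and then apply Corollary~\ref{Cdontmeet} with $v = v_f^*$. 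This buys an argument internal to the Mac Lane machinery (no external input from \cite{LL}), at the cost of an extra lemma: the density bound is true and your sketched induction does close (the top $\phi_{n-1}$-adic digit of a monic polynomial is monic, the augmented-valuation formula bounds $v_{n-1}(g)$ by the leading term, and $\lambda_{i}/N_{i}$ is increasing by Lemma~\ref{Lfdegree}(ii)), but it must be written out, including the base case $v_0(g)\leq 0$ and the inseparable-root bookkeeping in the resultant step (the unique extension of $\nu_K$ to $\ol{K}$ takes the same value at all roots of an irreducible polynomial, so the count by degree is still valid). One small correction: the strict inequality $v_{n-1}(\phi_n) < \lambda^*$ can fail in the boundary case $v_f^* = v_{n-1}$, which the paper's conventions allow for a Type II model (cf.\ the remark following Definition~\ref{D3types} that $v_{n-1}$ counts among the $w_{n-1,\lambda}$); this is harmless, since Corollary~\ref{Cdontmeet} only needs $\nu_K(\phi_n(\beta)) \leq \lambda^* \leq \nu_K(\phi_n(\alpha))$ with the outer terms distinct, which your bound still provides.
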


\begin{proof}
By Proposition~\ref{Palphaspecialize}, $D_{\alpha}$ specializes to the
$v_f^*$-component of the special fiber of $\mc{Y}$.  By
Corollary~\ref{CvalueofN} and Lemma~\ref{Lpreindexrelations}, the multiplicity
of this component is $N_n e^* = \deg(\phi_n)e^* \geq \deg(\phi_n)$.  So by
\cite[Lemma~5.1(a)]{LL}, $D_{\beta}$ does not specialize to this
component.  This proves part (i).

To prove part (ii), assume $\mc{Y}$ is Type I.  Note that $\alpha_n$ is a root of $\phi_n$, we have
$\nu_K(\phi_n(\alpha_n)) = \infty$, which does not lie between $\lambda^*$ and
$\lambda^{**}$.  As a consequence,
Corollary \ref{Cannulus} and Proposition~\ref{Palphaspecialize}(i)
show that $D_{\alpha}$ does not meet $D_{\alpha_n}$ on the special
fiber of $\mc{Y}$.

To prove part (iii), it suffices to assume $\mc{Y}$ is Type II, since
a Type III model is a contraction of a Type II model.  Since
both $\nu_K(\phi_n(\alpha)) = \lambda_n$ and $\nu_K(\phi_n(\alpha_n))
= \infty$ are greater than
$\lambda^*$, Proposition~\ref{Psmallerdiskoid} shows that they meet on
the special fiber of the $v_f^*$-model of $\proj^1_K$.  This point
is not a base point of the contraction $\mc{Y} \to \mc{Y}_{v_f^*}$,
because that would violate Proposition~\ref{Palphaspecialize}(ii).
Thus, $D_{\alpha}$ and $D_{\alpha_n}$ meet on $\mc{Y}$.  
\end{proof}

\subsubsection{}
The final result of this section, Proposition~\ref{Pfhorizontal}, shows how to appropriately modify the function $f$ to make a function that precisely cuts out the divisor $D_{\alpha}$ on each of the three types of models -- 

\begin{prop}\label{Pfhorizontal}
Let $\mc{Y}$ be a Type I, Type II, or Type III model of $\proj^1_K$, and let
$v_f^*$ and $v_f^{**}$ be defined accordingly. 
\begin{enumerate}[\upshape (i)]
\item If $\mc{Y}$ is Type I, the quantity $b := e(\lambda_n -
  \lambda^*)/(\lambda^{**} - \lambda^*)$ is an integer.  Furthermore,
there exists a monomial $s$ in $\phi_1, \ldots, \phi_{n-1}$ over $K$ such
  that the divisor $D_{\alpha}$ is locally cut out by $sf/\phi_n^b$.
\item If $\mc{Y}$ is Type II, there exists a monomial $t$ in
  $\phi_1,\ldots, \phi_{n-1}$ such the divisor $D_{\alpha}$ is
  locally cut out by $sf$, where $s = t^e$.
\item If $\mc{Y}$ is Type III, then there exists
  $s \in K(x)$ such that the divisor $D_{\alpha}$ is locally cut out by $sf$, and such
  that the support of $s$ is locally (near $D_{\alpha}$) contained in
  the special fiber of $\mc{Y}$. 
\end{enumerate}
%
%
%
\end{prop}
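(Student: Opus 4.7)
The plan is, at the closed point $y \in \mc{Y}_s$ where $D_\alpha$ specializes (as described by Proposition~\ref{Palphaspecialize}), to construct a rational function $g$ (either $sf/\phi_n^b$ or $sf$) whose divisor locally at $y$ equals $D_\alpha$. This requires two things: (a) that $g$ has vanishing order zero along every vertical component of $\mc{Y}$ passing through $y$, and (b) that no horizontal divisor other than $D_\alpha$ appears in the support of $\divi_0(g)$ at $y$. Condition (b) is enforced throughout by Corollary~\ref{Cnophii}, while (a) reduces to a Diophantine problem driven by the Stern-Brocot structure provided by Proposition~\ref{Pestar} and its analogs.

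For part (i), I first verify $b$ is a positive integer. Propositions~\ref{Pprimespaths} and~\ref{Pestar} place $\tilde{\lambda}^* < \tilde{\lambda}_n < \tilde{\lambda}^{**}$ on a common shortest $1$-path with $\tilde{\lambda}^{**} - \tilde{\lambda}^* = 1/(e^*e^{**})$. Writing $\tilde{\lambda}^* = p^*/e^*$ and $\tilde{\lambda}^{**} = p^{**}/e^{**}$ in lowest terms, Stern-Brocot theory expresses $\tilde{\lambda}_n = (mp^* + kp^{**})/(me^* + ke^{**})$ for positive integers $m,k$, whence $b = ke^{**}$ and $e - b = me^*$. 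Condition (a) at $v_f^*$ and $v_f^{**}$, combined with Lemma~\ref{Lfphie}(i),(iii) and $v_f^*(\phi_n) = \lambda^*$, $v_f^{**}(\phi_n) = \lambda^{**}$, collapses to the single requirement $v_{n-1}(s) = (b-e)\lambda^* = -me^*\lambda^*$. Since $N_n \cdot me^* \lambda^* = me^*\lfloor N_n\lambda_n\rfloor + mp^* \in \Z$, the target value lies in $\Gamma_{v_{n-1}} = \Z + \lambda_1\Z + \cdots + \lambda_{n-1}\Z$, so $s$ may be chosen as a $K^\times$-multiple of a monomial in $\phi_1,\ldots,\phi_{n-1}$. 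Condition (b) then follows from Corollary~\ref{Cnophii}(i) for $s$ and from Corollary~\ref{Cnophii}(ii) for $\phi_n$.

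Part (ii) is easier because only the $v_f^*$-component passes through $y$. Proposition~\ref{Pvaluegroup} gives $\lambda^* \in \Gamma_{v_{n-1}}$, so I pick $t$ a $K^\times$-multiple of a monomial in $\phi_1,\ldots,\phi_{n-1}$ with $v_{n-1}(t) = -\lambda^*$ and set $s := t^e$; then $v_f^*(sf) = e v_{n-1}(t) + e\lambda^* = 0$ by Lemma~\ref{Lfphie}(i). Condition (b) again follows from Corollary~\ref{Cnophii}(i). Crucially, $s$ does not involve $\phi_n$, since $D_{\alpha_n}$ meets $y$ on a Type II model (Corollary~\ref{Cnophii}(iii)); happily, the construction requires no $\phi_n$-factor.

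For part (iii), I run the same recipe with $v_{n-1}', v_{n-1}'', \phi_{n-1}$ in place of $v_f^*, v_f^{**}, \phi_n$. Set $p := \nu_{v_{n-1}'}(f) \in \Z$ and $q := \nu_{v_{n-1}''}(f) \in \Z$, and seek $s = c \prod_{i \leq n-1} \phi_i^{a_i}$ with $\nu_{v_{n-1}'}(s) = -p$ and $\nu_{v_{n-1}''}(s) = -q$. Taking the difference of these two equations and using $\mu'' - \mu' = 1/(N_{n-1}e'e'')$ (the analog of Proposition~\ref{Pestar} for the $v_{n-1}$-tower, which follows from \cite[Corollary~7.6]{ObusWewers}) yields $a_{n-1} = pe'' - qe' \in \Z$. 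The residual equation for $c$ and the $a_i$ with $i < n-1$ is solvable in $\Gamma_{v_{n-2}}$: a short congruence computation reduces the obstruction to $E_+ P_- \equiv -1 \pmod{E_-}$, which is precisely the Stern-Brocot relation $p^{**}e^* - p^*e^{**} = 1$ applied to the $\phi_{n-1}$-level. For (b), roots of $\phi_{n-1}$ avoid $y$ by Corollary~\ref{Cannulus} since $\nu_K(\phi_{n-1}(\alpha_{n-1})) = \infty$ lies outside $(\mu',\mu'')$, and roots of $\phi_i$ for $i < n-1$ avoid $y$ because $\deg \phi_i < N_{n-1}$ is smaller than the multiplicity of either component at $y$, by the same argument as in the proof of Corollary~\ref{Cnophii}(i). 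The function $s$ omits $\phi_n$ since $D_{\alpha_n}$ meets $y$ on a Type III model (Corollary~\ref{Cnophii}(iii)), but no $\phi_n$-factor is required in the construction. The main obstacle throughout is this integrality of exponents, which in all three parts is driven by the Stern-Brocot arithmetic of shortest $1$-paths.
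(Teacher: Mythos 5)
Parts (i) and (ii) of your argument are essentially the paper's proof: you reduce to making $v_f^*(sf/\phi_n^b)=v_f^{**}(sf/\phi_n^b)=0$, note $v_f^*(s)=v_f^{**}(s)=v_{n-1}(s)$ so the two conditions coincide by the definition of $b$, use Lemma~\ref{Lfphie}(i),(iii) for the dominant terms, Proposition~\ref{Pvaluegroup} in the Type II case, and Corollary~\ref{Cnophii} to rule out horizontal components of $s$ (and of $\phi_n$ in Type I) through $y$. The only cosmetic difference is your proof that $b\in\Z$: you invoke the Stern--Brocot/mediant decomposition of $\tilde\lambda_n$ between the Farey neighbors $\tilde\lambda^*$ and $\tilde\lambda^{**}$, getting $b=ke^{**}$, whereas the paper gets integrality (and the divisibility $e^{**}\mid b$, which it reuses) directly from Proposition~\ref{Pestar} and the fact that the denominators of $\lambda_n$ and $\lambda^*$ divide $N_ne$ and $N_ne^*$. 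Both are correct.

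For part (iii) you take a genuinely different, and harder, route than necessary, and as written it has a gap. The statement only asks for \emph{some} $s\in K(x)$ whose support near $D_\alpha$ is vertical; the paper's proof is two lines: $\mc{Y}$ is regular, hence $\hat{\mc{O}}_{\mc{Y},y}$ is a UFD, so $D_\alpha$ has a local equation $g$, and $s=g/f$ works because $\divi(f)$ and $D_\alpha$ have the same horizontal part near $D_\alpha$. You instead try to produce an explicit monomial in $\phi_1,\dots,\phi_{n-1}$ killing the $v_{n-1}'$- and $v_{n-1}''$-multiplicities of $f$. That can be made to work, but not as you have written it: the values $p=v_{n-1}'(f)$ and $q=v_{n-1}''(f)$ are not integers in general (they lie in $\Gamma_{v_{n-1}'}$ and $\Gamma_{v_{n-1}''}$), and the solvability of the exponent equations is exactly the point that needs proof --- your reduction to ``$E_+P_-\equiv -1\pmod{E_-}$'' uses undefined quantities and is not checked. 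To verify solvability one must first compute $p$ and $q$, which requires identifying $\phi_n^e$ as the unique dominant term of the $\phi_n$-adic expansion under both $v_{n-1}'$ and $v_{n-1}''$ and using the $\phi_{n-1}$-adic expansion of $\phi_n$ (this is precisely the content of the later Lemma~\ref{LtypeIII}), giving $p=ee_{n-1}\mu'$ and $q=ee_{n-1}\lambda_{n-1}$, where $\mu',\mu''$ are the $\phi_{n-1}$-values of $v_{n-1}',v_{n-1}''$; combined with $\mu''-\mu'=1/(N_{n-1}d'd'')$ and the mediant relation $e_{n-1}=d'+d''$ one finds $a_{n-1}=-ed''$ and a leftover value $-ed'\mu'\in\Gamma_{v_{n-2}}$, so a monomial exists. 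So your construction is salvageable, but it proves more than the proposition asserts, depends on unproved valuation computations for $f$, and should simply be replaced (or at least preceded) by the UFD observation.
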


\begin{remark}\label{Rallthree}
Since $\phi_1, \ldots, \phi_{n-1}$ all have degree lower than
$\deg(\phi_n)$, Corollary~\ref{Cnophii}(i) shows that the support of $s$ is locally (near $D_{\alpha})$ contained in the
special fiber of $\mc{Y}$ in parts (i) and (ii), as well as part (iii).
\end{remark}

To prove Proposition~\ref{Pfhorizontal}, we first need to compute the orders of vanishing of various auxiliary functions that will be used to modify the function $f$ along vertical components of Type I, II, and III models.  This is accomplished in Lemma~\ref{Lfphie}. The proof also needs two other short lemmas (Lemma~\ref{Pestar} and Lemma~\ref{Pvaluegroup}).

\begin{lemma}\label{Lfphie}
  Let $f = \phi_n^e + a_{e-1}\phi_n^{e-1} + \cdots + a_0$ be the
  $\phi_n$-adic expansion of $f$.  Let $\mc{Y}$ be a Type I or Type II
  model of $\proj^1_K$, and let $v_f^*$ and $v_f^{**}$ be defined
  accordingly.  Let $a_e = 1$.
  \begin{enumerate}[\upshape (i)]
  \item We have $v_f^*(f) = v_f^*(\phi_n^e) = e \lambda^*$.
  \item We have $v_f^*(a_i\phi_n^i) > e \lambda^*$ for $0 \leq i \leq e-1$.
  \item In the case of a Type I model, we have $v_f^{**}(f) = v_f^{**}(a_0) = e \lambda_n$.
  \item In the case of a Type I model, we have $v_f^{**}(a_i \phi_n^i)
    > e\lambda_n$ for $1 \leq i \leq e$.    
  \end{enumerate}  
\end{lemma}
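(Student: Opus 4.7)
The plan is to apply the augmented-valuation formula~(\ref{E:augval}) to the $\phi_n$-adic expansion $f = \sum_{i=0}^{e} a_i\phi_n^i$, first in $v_f^*$ and then (for parts (iii)--(iv)) in $v_f^{**}$, and compare with the bounds supplied by Lemma~\ref{Lfdegree}(ii) applied to $f$ over $v_n$. The key preliminary observation is that each $a_i$ has degree less than $\deg\phi_n$, so its own $\phi_n$-adic expansion is just $a_i$, and therefore
\[
v_{n-1}(a_i) = v_n(a_i) = v_f^*(a_i) = v_f^{**}(a_i).
\]
Combining this with Lemma~\ref{Lfdegree}(ii) yields $v_{n-1}(a_i) \geq (e-i)\lambda_n$ for all $0 \leq i \leq e$, with equality at $i=0$ (where $v_{n-1}(a_0) = e\lambda_n$) and at $i=e$ (where $v_{n-1}(a_e) = 0$).

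For parts (i) and (ii), I would apply (\ref{E:augval}) to $v_f^*$ to compute
\[
v_f^*(a_i\phi_n^i) = v_{n-1}(a_i) + i\lambda^* \geq (e-i)\lambda_n + i\lambda^* = e\lambda^* + (e-i)(\lambda_n - \lambda^*).
\]
At $i = e$ this equals $e\lambda^*$, while for $0 \leq i < e$ it strictly exceeds $e\lambda^*$ since $\lambda^* < \lambda_n$ by the construction of $v_f^*$ as a $w_{n-1,\lambda^*}$ via Proposition~\ref{Pgeneralresolution}. Taking the minimum over $i$ gives both statements simultaneously.

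For parts (iii) and (iv), the model $\mc{Y}$ is Type I, so $v_f^{**}$ is defined with $\lambda^{**} > \lambda_n$. The parallel computation
\[
v_f^{**}(a_i\phi_n^i) = v_{n-1}(a_i) + i\lambda^{**} \geq (e-i)\lambda_n + i\lambda^{**} = e\lambda_n + i(\lambda^{**} - \lambda_n)
\]
equals $e\lambda_n$ at $i = 0$ (with the bound attained since $v_{n-1}(a_0) = e\lambda_n$) and strictly exceeds $e\lambda_n$ for $1 \leq i \leq e$, yielding (iii) and (iv). The whole argument is a direct unwinding of (\ref{E:augval}) once Lemma~\ref{Lfdegree}(ii) has been invoked; the only substantive check is the strict separation $\lambda^* < \lambda_n < \lambda^{**}$, which is built into Definition~\ref{Dstar} and Proposition~\ref{Pgeneralresolution}, so there is no real obstacle here.
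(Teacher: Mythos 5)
Your proof is correct and follows essentially the same route as the paper: both rest on Lemma~\ref{Lfdegree}(ii) together with the observation that replacing $\lambda_n$ by $\lambda^*$ (resp.\ $\lambda^{**}$) shifts the valuation of the $i$th term of the $\phi_n$-adic expansion by $i(\lambda^*-\lambda_n)$ (resp.\ $i(\lambda^{**}-\lambda_n)$), so that $\phi_n^e$ (resp.\ $a_0$) becomes the unique term of minimal valuation. The paper states this qualitatively (``the term whose valuation is decreased the most / increased the least''), while you unwind it explicitly via~(\ref{E:augval}); the content is the same.
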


\begin{proof}
%
By Lemma~\ref{Lfdegreebasic}, $\phi_n^e$ is a term in the $\phi_n$-adic
expansion of $f$ with minimal $v_f$-valuation. It is also the term
whose valuation is decreased the most when $v_f$ is replaced with
$v_f^*$.  Thus $\phi_n^e$ is the unique term in the $\phi_n$-adic
expansion of $f$ with minimal $v_f^*$-valuation.  Since $v_f^*(\phi_n)
= \lambda^*$ by definition, this proves parts (i) and (ii).

Similarly, by Lemma~\ref{Lfdegreebasic}, $a_0$ is a term in the
$\phi_n$-adic expansion of $f$ with minimal $v_f$-valuation.  It is
also the term whose valuation is increased the least when $v_f$ is
replaced by $v_f^{**}$.  Thus $a_0$ is the unique term in the
$\phi_n$-adic expansion of $f$ with minimal $v_f^{**}$-valuation.
Since $v_f^{**}(a_0) = v_f(a_0) = e\lambda_n$ (Lemma~\ref{Lfdegreebasic}),
this proves parts (iii) and (iv).
%
\end{proof}

\begin{lemma}\label{Pestar}
On a Type I model $\mc{Y}$, we have $\lambda^{**} - \lambda^* = 1/(N_ne^*e^{**})$. 
\end{lemma}

\begin{proof}
Since $\mc{Y}$ is regular and the $v_f^*$- and $v_f^{**}$-components
intersect, \cite[Corollary~7.6]{ObusWewers} (with $\mc{X} = \mc{X}'$
there) shows that $\lambda^{**} > \lambda^*$ is the shortest
$N_n$-path.  By \cite[Corollary~A.7]{ObusWewers},
$\tilde{\lambda}^{**} > \tilde{\lambda}^*$
is a shortest $1$-path, where $\tilde{\lambda}^*$ and
$\tilde{\lambda}^{**}$ are as in Proposition~\ref{Pprimespaths}.  By
the definition of a $1$-path, $\tilde{\lambda}^{**} -
\tilde{\lambda}^{*} = 1/(e^*e^{**})$, so $\lambda^{**} - \lambda^{*} = 1/(N_ne^*e^{**})$. 
\end{proof}

\begin{lemma}\label{Pvaluegroup}
On a Type II model $\mc{Y}$, we have $\Gamma_{v_f^*} = \Gamma_{v_{n-1}}$.
\end{lemma}

\begin{proof}
If $\mc{Y}$ is a Type II model, then it dominates
$\mc{Y}_{v_f^*}^{\reg}$, and thus includes all the valuations therein.  On the other hand, by the definition of a Type II model,
$\mc{Y}$ does not include any valuation of the form $[v_0,\,
v_1(\phi_1) = \lambda_1,\, \ldots,\, v_{n-1}(\phi_{n-1}) =
\lambda_{n-1},\, v_n(\phi_n) = \lambda]$ with $\lambda > \lambda^*$.
Applying Proposition~\ref{Pgeneralresolution} to $v_f^*$, this forces the $\beta_n$ referred to in the first bullet point of
Proposition~\ref{Pgeneralresolution} to equal $\lambda^*$.  So
$\lambda^* \in (1/N_n)\ints = \Gamma_{v_{n-1}}$. Since $\Gamma_{v_f^*} = [v_{n-1}, v_i(\phi_i) =\lambda^*]$, it follows that $\lambda^*$ together with $\Gamma_{v_{n-1}}$ generates $\Gamma_{v_f^*}$. Combining the previous two sentences, we get
$\Gamma_{v_f^*} = \Gamma_{v_{n-1}}$. 
\end{proof}

\begin{proof}[Proof of Proposition~\ref{Pfhorizontal}]
To prove the first assertion of part (i), note that $\lambda^{**} -
\lambda^* = 1/(N_ne^*e^{**})$ by Lemma~\ref{Pestar}.  So $b = N_nee^*e^{**}(\lambda_n -
\lambda^*)$.  Since the denominator of $\lambda_n$ divides $e(v_f/v_0)
= N_ne$ and that
of $\lambda^*$ divides $e(v_f^*/v_0) = N_ne^{*}$, we have that $b$
is an integer, and is in fact divisible by $e^{**}$.

Now, assume we are on a Type I model and let $y$ be the point where $D_{\alpha}$ meets the special fiber of $\mc{Y}$, i.e., the specialization of $f(x) = 0$.  The
function $f$ in general does not locally cut out $D_{\alpha}$ at $y$, because
$\divi(f)$ might also include vertical components passing through
$y$. By Proposition~\ref{Palphaspecialize}(i), $z$ is the intersection of
the $v_f^*$ and $v_f^{**}$-components of the special fiber.  By
Corollary~\ref{Cnophii}(i), (ii), the specialization of $\phi_i = 0$ is not $y$ for any
$1 \leq i \leq n$.  So to finish the proof of part (i), it suffices to construct $s$
as in the proposition such that $v_f^*(sf/\phi_n^b) =
v_f^{**}(sf/\phi_n^b) = 0$.

By Lemma~\ref{Lfphie}(i), we have $v_f^*(f/\phi_n^b) = (e -
b)\lambda^*$.  Likewise, by Lemma~\ref{Lfphie}(iii), we have
$v_f^{**}(f/\phi_n^b) = e\lambda_n - b\lambda^{**}$.
Since $e^{**} \mid b$, and the denominators of $\lambda_n$ and
$\lambda^{**}$ are $N_ne$ and $N_ne^{**}$ respectively, $e\lambda_n -
b\lambda^{**} \in \Gamma_{v_{n-1}} = (1/N_n)\ints$.
This means that there exists $s$ as in the proposition such that
$v_f^{**}(sf/\phi_n^b) = 0$.  Since $v_f^{*}(s) = v_f^{**}(s)$,
showing that $v_f^*(sf/\phi_n^b) = 0$ is reduced to showing that $(e - b)\lambda^* = e\lambda_n -
b\lambda^{**}$.  But this is immediate upon plugging in the definition
of $b$.

Now we prove part (ii).  Let $y$ be as in part (i).  By
Proposition~\ref{Palphaspecialize}(ii), $y$ lies on a unique component of the special fiber, namely the $v_f^*$-component. 
Furthermore, since the value group of $v_f^*$ is $\Gamma_{v_{n-1}}$
(Lemma~\ref{Pvaluegroup}), we
have that $v_f^*(\phi_n) = \lambda^* \in \Gamma_{v_{n-1}}$. Thus we can find $t$ as in
the proposition such that $v_f^*(t) = -\lambda^*$.     
By Lemma~\ref{Lfphie}(i), $v_f^*(t^ef) = v_f^*(sf) = 0$.  By Corollary~\ref{Cnophii}(i), the specialization of $\phi_i = 0$ is not
$y$ for any $1 \leq i \leq n-1$.  So $sf$ cuts out $D_{\alpha}$,
proving part (ii).

For part (iii), note that $\mc{Y}$ is regular,
and is thus a local UFD.  Since $\divi(f)$ and $D_{\alpha}$ agree on the
generic fiber in a neighborhood of $D_{\alpha}$, there exists $s \in
K(x)$ with the desired property.
\end{proof}

\section{Minimal embedded resolution}\label{Sproof}

In this section, we prove our main result,
Theorem~\ref{Thorizontalregular}, which explicitly gives the minimal
embedded resolution of $(\mc{X}, \divi_0(f))$, where $\mc{X}$ is the
$v_0$-model of $\proj^1_K$ and $f \in \mc{O}_K[x]$ is a monic
polynomial of degree at least $2$.  We begin in \S\ref{Slocalrings} with some general results
on regularity, and then return to Mac Lane valuations and models of
$\proj^1_K$ for the proof in \S\ref{Scriterion}.
The main technical lemma that makes everything work is
Lemma~\ref{Lindividualterms}, which depends heavily on the work in \S\ref{Shorizontal}.

Throughout \S\ref{Sproof}, with the exception of Remark~\ref{Retaledescent} and the conclusion of the paper immediately following it, we assume that the residue field $k$ of $K$ is \emph{algebraically closed}.

\subsection{Generalities on regular models}\label{Slocalrings}

\begin{lemma}\label{Lcanblowup}
If $\mc{X}$ is a regular model of $\proj^1_K$ and $D$ is a reduced,
effective, regular divisor on $\mc{X}$ and if $f: \mc{X}' \to \mc{X}$ is a modification,
then the strict transform $D'$ of $D$ in $X'$ is regular.
\end{lemma}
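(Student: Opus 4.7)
The plan is to reduce to the case of a single closed-point blow-up via the factorization theorem for proper birational morphisms of regular two-dimensional schemes, and then to verify regularity of the strict transform by a direct computation in local coordinates.

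First I would invoke the classical factorization theorem (which applies to proper birational morphisms between regular, excellent, two-dimensional schemes): the given $f$ can be written as a composition $\mc{X}' = \mc{X}_r \to \mc{X}_{r-1} \to \cdots \to \mc{X}_0 = \mc{X}$ in which each map $\mc{X}_{i+1} \to \mc{X}_i$ is the blow-up of a closed point, and every intermediate $\mc{X}_i$ is regular. Since both $\mc{X}'$ and $\mc{X}$ are proper over $\mc{O}_K$, the map $f$ is automatically proper and the theorem applies. Induction on $r$ then reduces the lemma to the case where $f$ is a single blow-up at a closed point $P \in \mc{X}$.

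In this reduced setting, $f$ is an isomorphism away from $f^{-1}(P)$, so $D'$ agrees with $D$ there and is automatically regular. If $P \notin D$ there is nothing more to check, so assume $P \in D$. Since $D$ is regular at $P$, a local equation $u$ of $D$ lies in $\mathfrak{m}_{\mc{X},P} \setminus \mathfrak{m}_{\mc{X},P}^2$, and hence can be extended to a regular system of parameters $(u,v)$ of the two-dimensional regular local ring $\mc{O}_{\mc{X},P}$. The preimage $f^{-1}(\Spec \mc{O}_{\mc{X},P})$ is then covered by the two standard affine charts of the blow-up of $(u,v)$. In the chart obtained by adjoining $s \colonequals u/v$, the exceptional divisor is cut out by $v$ and the total transform of $D$ is $V(u) = V(sv)$; the strict transform $D'$ is thus $V(s)$, and $(s,v)$ is still a regular system of parameters at any closed point of this chart lying over $P$, so $D'$ is regular there. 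In the complementary chart, obtained by adjoining $v/u$, the total transform of $D$ equals the exceptional divisor, so $D'$ does not meet this chart at all. This exhausts $f^{-1}(P) \cap D'$.

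The only real subtlety is the appeal to the factorization theorem in the arithmetic setting; once that is in hand, the local computation with a regular system of parameters is entirely routine, and I do not anticipate any hidden obstacle.
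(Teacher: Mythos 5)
Your proposal is correct and follows essentially the same route as the paper: factor $f$ into a sequence of blow-ups at closed points (Liu, Theorem 9.2.2), reduce by induction to a single point blow-up, and then check that the strict transform of a regular divisor stays regular—where the paper simply cites that $D'$ is the blow-up of $D$ at the same point, you instead verify regularity by the standard chart computation with a regular system of parameters $(u,v)$. The only nitpick is the phrase ``$(s,v)$ is still a regular system of parameters at any closed point of this chart lying over $P$'': this holds only at the point $s=v=0$ (elsewhere $s$ is a unit), but that is the unique point where $D'=V(s)$ meets the fiber, so the argument is unaffected.
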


\begin{proof}
  Since $\mc{X}$ is normal, $f$ is an isomorphism above points of codimension 1, thus over the generic point of each component of $D$.  So $D' \to D$ is proper and birational.  Since $\dim(D') = \dim(D) = 1$, $D' \to D$ is finite as well, and thus it is an isomorphism, proving the lemma.
\end{proof}

The following proposition is well-known, but we were unable to find an
exact reference.  We state it only in the generality we need.
\begin{prop}\label{Puniqueminimal}
  If $\mc{X}$ is a regular model of $\proj^1_K$ and $D$ is an integral horizontal
  divisor on $\mc{X}$, then there is a unique minimal modification $\mc{X}' \to \mc{X}$ such that
  $\mc{X}'$ is regular and the strict transform of $D$ is regular.
\end{prop}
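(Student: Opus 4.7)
The plan is to establish existence first and then deduce uniqueness through a domination-and-contraction argument. For existence, we invoke embedded resolution of singularities for excellent regular arithmetic surfaces (classical in this setting): by repeatedly blowing up the closed points at which the strict transform of $D$ fails to be regular, we obtain after finitely many steps a regular modification $\mc{X}' \to \mc{X}$ on which the strict transform of $D$ is regular. This produces at least one good modification in the sense of the proposition.

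For uniqueness, suppose $f_i \colon \mc{X}_i \to \mc{X}$ ($i \in \{1,2\}$) are both minimal good modifications. I first construct a common good modification $\mc{Y}' \to \mc{X}$ factoring through both $\mc{X}_i$: take the normalization $\mc{Y}$ of the closure in $\mc{X}_1 \times_{\mc{X}} \mc{X}_2$ of the graph of the generic-fiber isomorphism $\mc{X}_{1,K} \cong \mc{X}_{2,K}$, and then apply the embedded resolution of the previous paragraph to $\mc{Y}$ with respect to the pullback of $D$ to obtain a good modification $\mc{Y}'$ dominating both $\mc{X}_i$. Crucially, Lemma~\ref{Lcanblowup} applied to $\mc{Y}' \to \mc{X}_i$ implies that on every regular intermediate contraction between $\mc{Y}'$ and $\mc{X}_i$, the strict transform of $D$ remains regular --- every intermediate is itself a good modification.

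The remaining step is the standard diamond property for $-1$-curves on regular arithmetic surfaces: starting from $\mc{Y}'$ and repeatedly contracting $-1$-curves whose contraction preserves goodness (regularity of the ambient model and of the strict transform of $D$), we reach in finitely many steps a good modification $\mc{X}^{*}$ admitting no further such contraction. Because contractions of disjoint $-1$-curves commute, and because the obstruction to goodness after contracting a single $-1$-curve is local at its image point, $\mc{X}^{*}$ is independent of the order of contractions (cf.\ \cite[Theorem~9.2.2]{LiuBook} and the factorization theorem for birational morphisms of regular arithmetic surfaces). Both $\mc{X}_1$ and $\mc{X}_2$ arise as such terminal $\mc{X}^{*}$ by their assumed minimality, which forces $\mc{X}_1 \cong \mc{X}_2$ over $\mc{X}$. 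The main obstacle I anticipate is making this diamond argument fully rigorous in this constrained setting, in particular verifying that the subclass of goodness-preserving contractible $-1$-curves at each stage is closed under the local commuting-contractions diamond, so that the terminal object is genuinely well-defined.
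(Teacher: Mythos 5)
Your existence step and the construction of a common good model $\mc{Y}'$ dominating $\mc{X}_1$ and $\mc{X}_2$ are fine, and your use of Lemma~\ref{Lcanblowup} to see that every regular intermediate model between $\mc{Y}'$ and $\mc{X}_i$ again has regular strict transform of $D$ is correct. The gap is exactly the step you flag at the end: the ``diamond''/confluence claim that the terminal object of the process ``contract goodness-preserving $-1$-curves until stuck'' is independent of the order of contractions. The two justifications you offer do not establish it: contractions of \emph{intersecting} $-1$-curves do not commute (after contracting one, the other is no longer contractible to a regular point), and ``the obstruction is local at the image point'' is an assertion, not an argument that two one-step good contractions of the same model admit a common good contraction, which is what Newman-style confluence would require. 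The factorization theorem for birational morphisms of regular arithmetic surfaces does not supply this either. Indeed, for models of $\proj^1_K$ the unconstrained version of your claim is false: relatively minimal regular models of $\proj^1_K$ are not unique, so any correct argument must use the specific constraints (all models dominate the fixed $\mc{X}$, the strict transform of the \emph{integral horizontal} divisor $D$ is required to be regular) in an essential way, and your proposal never does. There is also a secondary point: even granting that any two minimal elements coincide, you need the minimal model to be dominated by \emph{every} good modification (this universal property is what the paper later uses), which again is exactly what the unproved confluence would have delivered.

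The paper's proof goes bottom-up instead and is where the hypotheses enter. Since $D$ is integral and horizontal and $\mc{O}_K$ is Henselian with algebraically closed residue field, $D$ meets the special fiber in a \emph{unique} closed point $x$. Any modification on which the strict transform of $D$ is regular is a finite composition of point blowups (\cite[Theorem~9.2.2]{LiuBook}); blowups centered off $\mathrm{Supp}(D)$ do not change $D$, so if $D$ is not already regular, any such modification must blow up $x$, hence factors through the blowup of $\mc{X}$ at $x$; the strict transform on that blowup is again integral and horizontal, and induction on the number of blowups needed produces the minimal resolution and its uniqueness (universal property) simultaneously. If you want to salvage your top-down approach, the fix is essentially to import this observation: show that on any good model strictly above a non-good one, the only $-1$-curve whose contraction can destroy goodness lies over the unique singular point of the strict transform of $D$, which forces the descending chains to agree --- but at that point you are reproving the paper's forced-blowup argument.
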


\begin{proof}
By \cite[Theorem 9.2.26]{LiuBook}, there exists \emph{some} modification 
$\mc{Y} \to \mc{X}$ with $\mc{Y}$ regular under which the total transform of $D$ has normal crossings, and in particular, the strict transform of $D$ is
thus regular.  We now prove that a minimal such $\mc{Y}$ is unique.
By \cite[Theorem 9.2.2]{LiuBook}, the morphism $\mc{Y} \to \mc{X}$ is
a finite sequence of blowups at reduced closed points.

We now prove the proposition by induction on the minimum number $n$ of
blowups of $\mc{X}$ at closed points required to make the strict transform of $D$
regular.  The case $n = 0$ is trivial.   If not, since blowups in
centers outside Supp$(D)$ do not affect $D$, any minimal sequence of
blowups making the strict transform of $D$ regular begins with blowing
up the (unique) intersection point $x$ of $D$ with the special fiber
of $\mc{X}$.  Replacing $\mc{X}$ with its blowup at $x$, and noting that the strict transform of $D$ is still integral on this blowup and then
applying induction completes the proof.
\end{proof}

\begin{lemma}\label{Ldivstructure}
Let $\mc{Y}$ be a regular snc-model of a smooth curve $Y$ over $K$, and
let $y \in \mc{Y}$ be a closed point.
  Let $\divi(f)$, $\divi(g)$ be the divisors in $\Spec \hat{\mc{O}}_{\mc{Y},y}$ of functions $f, g \in
  \hat{\mc{O}}_{\mc{Y}, y}$ respectively.
  \begin{enumerate}[\upshape (i)]
  \item
    Suppose $\divi(f)$ is of the form $\sum_{i=1}^r c_i D_i$
  for some integers $c_i \geq 0$ where the $D_i$ are Weil prime
  divisors.  If $\sum_i c_i \geq 2$, then 
  $f \in \mf{m}_{\mc{Y}, y}^2$.
\item Suppose $\divi(f) = D$ where $D$ is a Weil prime divisor
  corresponding to one of the irreducible components of the special
  fiber of $\mc{Y}$ passing through $y$.  Then $f \in \mf{m}_{\mc{Y},
    y} \setminus \mf{m}_{\mc{Y}, y}^2$.
\item
  If $\divi(f) = D$ and $\divi(g) = E$, where $D$ and $E$ are Weil
  prime divisors corresponding to two different components of the
  special fiber of $\mc{Y}$ passing through $Y$, then the images of
  $f$ and $g$ are linearly independent in $\mf{m}_{\mc{Y}, y} /
  \mf{m}_{\mc{Y}, y}^2$.
\end{enumerate}
\end{lemma}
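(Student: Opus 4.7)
My plan rests on the fact that $R \colonequals \hat{\mc{O}}_{\mc{Y},y}$ is a regular local ring of dimension $2$, hence a unique factorization domain, with maximal ideal $\mf{m} \colonequals \mf{m}_{\mc{Y},y}$ and residue field $k$ (using that $k$ is algebraically closed, so closed points on the special fiber have residue field $k$). Every Weil prime divisor on $\Spec R$ corresponds to a height-$1$ prime ideal, which is principal and generated by some prime element of $\mf{m}$, and this correspondence will let me translate each statement into a computation of ideal-theoretic containments.

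For part (i), I would write each $D_i = \divi(\pi_i)$ for a prime $\pi_i \in \mf{m}$. Since $R$ is a UFD, the equality of principal divisors $\divi(f) = \sum_i c_i \divi(\pi_i) = \divi\bigl(\prod_i \pi_i^{c_i}\bigr)$ forces $f = u \prod_i \pi_i^{c_i}$ for some unit $u \in R^{\times}$. Each $\pi_i$ lies in $\mf{m}$ and $\sum_i c_i \geq 2$, so $f \in \mf{m}^{\sum_i c_i} \subseteq \mf{m}^2$.

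For parts (ii) and (iii) I would invoke the snc hypothesis on $\mc{Y}$. For (ii), write $f = u\pi$ with $\pi$ a prime generator of the ideal of $D$. Since $\mc{Y}$ is snc, every irreducible component of the special fiber is regular, so in particular $D$ is regular at $y$; by the standard characterization of regular closed subschemes of a regular local scheme, this is equivalent to $\pi$ being part of a regular system of parameters, i.e., $\pi \in \mf{m} \setminus \mf{m}^2$. Multiplying by the unit $u$ preserves this. For (iii), the snc hypothesis tells us that the two distinct components $D$ and $E$ of the special fiber through $y$ meet transversally at $y$, so there exist elements $u_D, v_E \in \mf{m}$ cutting out $D$ and $E$ respectively whose images form a $k$-basis of $\mf{m}/\mf{m}^2$. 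Then $f$ and $g$ agree with $u_D$ and $v_E$ up to units in $R$, so their images in $\mf{m}/\mf{m}^2$ are nonzero scalar multiples of a basis and hence are linearly independent.

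The proof is essentially a direct translation between divisor data and ideal-theoretic data in a $2$-dimensional regular local ring, combined with the geometric input from snc that the relevant components are regular and meet transversally. I do not anticipate any serious obstacle: the UFD property of $R$ handles (i), and the standard correspondence between regular systems of parameters and transverse regular divisors handles (ii) and (iii).
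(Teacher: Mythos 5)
Your proposal is correct. Part (i) is exactly the paper's argument: both use that $\hat{\mc{O}}_{\mc{Y},y}$ is a regular local ring, hence a UFD, to write $f$ as a unit times a product of $\sum_i c_i \geq 2$ prime elements of $\mf{m}_{\mc{Y},y}$. For parts (ii) and (iii) you take a somewhat different route: the paper invokes \cite[Lemma~2.3.2]{CES} to get the explicit presentation $\hat{\mc{O}}_{\mc{Y},y} \cong \mc{O}_K[[y_1,y_2]]/(y_1^{m_1}\cdots y_r^{m_r} - u\pi_K)$ with the components through $y$ cut out by $y_1$ (and $y_2$), and then reads off that $f$ is a unit times $y_1$ or $y_2$ and that $\mf{m}_{\mc{Y},y} = (y_1,y_2)$; you instead use the snc hypothesis directly, arguing that each component is regular at $y$, so its local equation $\pi$ satisfies $R/(\pi)$ regular, equivalently $\pi \in \mf{m}\setminus\mf{m}^2$, and that transversality of two components at $y$ means their local equations form a regular system of parameters, hence a basis of $\mf{m}/\mf{m}^2$; unit factors then act by nonzero scalars on $\mf{m}/\mf{m}^2$. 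Your version is more self-contained and slightly more general, replacing the citation to the structure theorem for complete local rings of arithmetic surfaces by the standard characterization of regular hypersurfaces in a regular local ring; the paper's version buys an explicit coordinate model of $\hat{\mc{O}}_{\mc{Y},y}$ with a single citation, at the cost of relying on that external structural result. Either way the mechanism of the proof (unit times parameter, parameters giving a basis of $\mf{m}/\mf{m}^2$) is the same, and I see no gap in your argument.
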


\begin{proof}
 First note that the regular local
ring $\hat{\mc{O}}_{\mc{Y}, y}$ is a UFD and thus every height one
prime ideal is principal.  Thus in the situation of part (i), $f = w\prod_i f_i^{c_i}$, where $w$ is a
unit and $\divi(f_i) = D_i$.  Since the $f_i$ lie in the maximal
ideal, this proves (i).

In fact, by \cite[Lemma~2.3.2 and its proof]{CES}, we can write
  $$\hat{\mc{O}}_{\mc{Y}, y} \cong \mc{O}_K[[y_1, y_2]]/(y_1^{m_1}
  \cdots y_r^{m_r} - u\pi_K),$$ with $r \in \{1, 2\}$.  The irreducible components of the
  special fiber passing through $y$ are cut out by $y_1$ if $r=1$ and, by $y_1$ and $y_2$ if $r =
  2$.  So in the situation of part (ii), we have $f = wy_1$
  or $f = wy_2$, with $w$ a unit.  Since $\mf{m}_{\mc{Y}, y} =  (y_1,
  y_2)$, this proves part (ii).  In the situation of
  part (iii), we have $r = 2$, and the result follows from the fact
  that the images of $w_1y_1$ and $w_2y_2$ are linearly independent in
  $(y_1, y_2)/(y_1, y_2)^2$.
\end{proof}

\begin{prop}\label{Pm2}
  Let $\mc{Y}$ be a regular model of $\proj^1_K$, and let $y$ be the
  point where $D_{\alpha}$ intersects the special fiber.  Let $g \in
  \hat{\mc{O}}_{\mc{Y}, y}$ be such that $\divi(g) = D_{\alpha}$ on
  $\Spec \hat{\mc{O}}_{\mc{Y}, y}$.  Then
  $D_{\alpha}$ is regular if and only if $g \notin \mf{m}_{\mc{Y},y}^2$.   
\end{prop}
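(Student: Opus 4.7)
The plan is to reduce the question to a standard fact about cutting out a hypersurface in a regular local ring, after first passing to the completion so that the equation $g$ actually lives in the ring.

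First I would observe that since $\mc{Y}$ is a regular model of $\proj^1_K$, the local ring $\mc{O}_{\mc{Y},y}$ is a two-dimensional regular local ring, and therefore so is its completion $\hat{\mc{O}}_{\mc{Y},y}$. Regularity of $D_{\alpha}$ at $y$ means that $\mc{O}_{D_{\alpha},y}$ is a regular local ring, which by faithful flatness of completion is equivalent to the regularity of the completion $\hat{\mc{O}}_{D_{\alpha},y}$. The hypothesis $\divi(g) = D_{\alpha}$ on $\Spec \hat{\mc{O}}_{\mc{Y},y}$ means precisely that $g$ generates the ideal of $D_{\alpha}$ in $\hat{\mc{O}}_{\mc{Y},y}$, so the completed local ring of $D_{\alpha}$ at $y$ is $\hat{\mc{O}}_{\mc{Y},y}/(g)$. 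Also, because $y$ lies on $D_{\alpha}$, the element $g$ vanishes at $y$, i.e., $g \in \mf{m}_{\mc{Y},y}$.

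Next I would invoke the standard characterization: if $(R,\mf{m})$ is a regular local ring of dimension $d$ and $g \in \mf{m}$ is nonzero, then $R/(g)$ is regular (necessarily of dimension $d-1$) if and only if the image of $g$ in $\mf{m}/\mf{m}^2$ is nonzero, i.e., $g \notin \mf{m}^2$. Applied to $R = \hat{\mc{O}}_{\mc{Y},y}$ (which is two-dimensional regular) and the element $g$, this immediately yields the desired equivalence: $D_{\alpha}$ regular at $y$ $\iff$ $\hat{\mc{O}}_{\mc{Y},y}/(g)$ regular $\iff$ $g \notin \mf{m}_{\mc{Y},y}^2$.

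There is no real obstacle here; the only thing to take care of is the distinction between $\mc{O}_{\mc{Y},y}$ and its completion, which is handled by the fact that regularity descends along the faithfully flat map $\mc{O}_{\mc{Y},y} \to \hat{\mc{O}}_{\mc{Y},y}$ and that the powers of the maximal ideal behave compatibly with completion (so $g \in \mf{m}_{\mc{Y},y}^2$ in $\hat{\mc{O}}_{\mc{Y},y}$ is the same condition whether tested in $\mc{O}_{\mc{Y},y}$ or its completion, when $g$ comes from $\mc{O}_{\mc{Y},y}$). In fact the entire argument could be paraphrased by noting that Lemma~\ref{Ldivstructure}(ii) already exhibits an element not in $\mf{m}_{\mc{Y},y}^2$ whose divisor is a regular prime divisor through $y$, and a symmetric argument using Lemma~\ref{Ldivstructure}(i) in the reverse direction handles the $\mf{m}^2$ case; but the cleanest proof is the one-line invocation of the regular-sequence criterion above.
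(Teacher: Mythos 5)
Your proof is correct and is essentially the paper's argument: the paper disposes of this proposition by citing the standard local-equation criterion for regularity of a divisor on a regular scheme (\cite[Corollary~4.2.12]{LiuBook}), which is exactly the regular-local-ring fact you prove directly. Spelling out the reduction to the completion (and noting that regularity of $D_{\alpha}$ need only be checked at $y$, its completed local ring being $\hat{\mc{O}}_{\mc{Y},y}/(g)$) is a fine, slightly more self-contained way of saying the same thing.
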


\begin{proof}
This is \cite[Corollary~4.2.12]{LiuBook}.
\end{proof}

\subsection{Non-archimedean analysis of valuations in an expansion}\label{Scriterion}


Maintain our notation
from \S\ref{Shorizontal}.  In particular, for the remainder of the paper, $f \in \mc{O}_K[x]$ is monic
and irreducible of degree at least $2$, $\alpha$ is a root of $f$, and on any regular model of
$\proj^1_K$, the divisor $D_{\alpha}$ is the horizontal divisor
corresponding to $\alpha$ as in \S\ref{Shorizontaldivs}.  As in
\S\ref{Shorizontal}, we use the notation $v_f$ for the unique Mac Lane
valuation over which $f$ is a proper key polynomial.  We also use the
valuations $v_f'$ and $v_f''$ from Notation~\ref{Nmaclane}, and we use
the concept of Type I/II/III models associated to $f$ from
Definition~\ref{D3types}, which give rise to valuations $v_f^*$,
$v_f^{**}$, $v_{n-1}'$, and $v_{n-1}''$ as in Definition~\ref{Dstar}.
As in \S\ref{Shorizontaldivs}, we write $e = e(v_f/v_{n-1})$, $e' = e(v_f'/v_{n-1})$, $e'' =
e(v_f''/v_{n-1})$, and, when there is a Type I/II model in play, $e^*
= e(v_f^*/v_{n-1})$ and $e^{**} = e(v_f^{**}/v_{n-1})$.

We decompose the function cutting out the unique horizontal divisor of $D_{\alpha}$ using the $\phi_n$-adic expansion of $f$, and analyze which of the terms in the decomposition are in $\mf{m}_{\mc{Y},y}^2$ for Type I/II models $\mc{Y}$. This will be the key technical input for analyzing regularity of $D_{\alpha}$ on these models in the next section. 

\begin{lemma}\label{Lindividualterms}
Let $\mc{Y}$ be a Type I or Type II model of $\proj^1_K$, and let $y
\in \mc{Y}$ be the point where $D_{\alpha}$ meets the special fiber of $\mc{Y}$.
Let $s$ be as in Proposition~\ref{Pfhorizontal}(i), (ii), let $b$ be as in Proposition~\ref{Pfhorizontal}(i) if $\mc{Y}$ is Type
I and let $b = 0$ if $\mc{Y}$ is Type II.  If $f = \phi_n^e +
a_{e-1}\phi_n^{e-1} + \cdots + a_0$ is the $\phi_n$-adic expansion of
$f$, then we can write
\begin{equation}\label{Eexpansion}
\frac{sf}{\phi_n^b} = s\phi_n^{e-b} + sa_{e-1}\phi_n^{e-1-b} +
\cdots + sa_0\phi_n^{-b}.
\end{equation}
Then,
\begin{enumerate}[\upshape (i)]
\item All terms $s a_i \phi_n^{i - b}$ of (\ref{Eexpansion}) for $1
  \leq i \leq e-1$ are in $\mf{m}_{\mc{Y}, y}^2$. 
\item We have 
  $sa_0\phi_n^{-b} \in \mf{m}_{\mc{Y}, y}^2$ if and only if $v_f^* \neq
  v_f'$.
\item We have 
  $s\phi_n^{e-b} \in \mf{m}_{\mc{Y}, y}^2$ if and only if $\mc{Y}$ is
  Type II or $v_f^{**} \neq v_f''$.
\item Suppose $\mc{Y}$ is Type I.  If $v_f^* = v_f'$ and $v_f^{**} = v_f''$, then $s
  \phi_n^{e-b}$ and $sa_0\phi_n^{-b}$ generate linearly independent elements of
  $\mf{m}_{\mc{Y}, y}/ \mf{m}_{\mc{Y}, y}^2$.
\end{enumerate}
\end{lemma}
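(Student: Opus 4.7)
The strategy is to compute the divisor of each term $T_i := sa_i\phi_n^{i-b}$ locally at $y$ and then invoke Lemma~\ref{Ldivstructure}. First I catalogue the prime divisors of $\Spec \hat{\mc{O}}_{\mc{Y},y}$. For a Type I model, Proposition~\ref{Palphaspecialize}(i) together with Corollary~\ref{Cnophii}(i),(ii) shows these are exactly $D_\alpha$ together with the two vertical divisors $V^*$ and $V^{**}$ coming from $v_f^*$ and $v_f^{**}$. For Type II, Proposition~\ref{Palphaspecialize}(ii) and Corollary~\ref{Cnophii}(i),(iii) give $D_\alpha$, the horizontal divisor $D_{\alpha_n}$, and the single vertical component $V^*$. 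A routine check (using that $\pi_K$ is a unit at $\eta_{D_\alpha}$ and that $s$, $a_i$, $\phi_n$ all have degree less than $\deg(f)$) shows $\nu_{D_\alpha}(T_i) = 0$ always, and $\nu_{D_{\alpha_n}}(T_i) = i$ in the Type II case.

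The next step is to convert $v_f^*$ and $v_f^{**}$ into multiplicities via $\nu_V = e(v/v_0)\cdot v$, using Lemma~\ref{Lpreindexrelations}. The identity $v_f^*(sf/\phi_n^b) = 0$ combined with Lemma~\ref{Lfphie}(i) forces $v_f^*(s) = -(e-b)\lambda^*$, and in Type I the analogue for $v_f^{**}$ with Lemma~\ref{Lfphie}(iii) gives $v_f^{**}(s) = b\lambda^{**} - e\lambda_n$. Substituting produces
\[
v_f^*(T_i) = v_f^*(a_i\phi_n^i) - e\lambda^*, \qquad v_f^{**}(T_i) = v_f^{**}(a_i\phi_n^i) - e\lambda_n,
\]
so the full strength of Lemma~\ref{Lfphie} yields $\nu_{V^*}(T_i) \geq 0$ with equality iff $i = e$, and $\nu_{V^{**}}(T_i) \geq 0$ with equality iff $i = 0$. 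This also certifies that each $T_i$ lies in $\hat{\mc{O}}_{\mc{Y},y}$, despite the possibly negative exponent of $\phi_n$.

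The four parts then fall out of Lemma~\ref{Ldivstructure}. For part (i), the divisor of $T_i$ at $y$ for $1 \leq i \leq e-1$ contains two primes of multiplicity $\geq 1$ (namely $V^*$ and $V^{**}$ in Type I, or $V^*$ and $D_{\alpha_n}$ in Type II), so Lemma~\ref{Ldivstructure}(i) applies. Parts (ii) and (iii), in the Type I case, are dichotomies depending on whether the multiplicity of $V^*$ in $\divi(T_0)$, or of $V^{**}$ in $\divi(T_e)$, equals $1$; the explicit bounds $\lambda_n - \lambda^* \geq 1/(N_nee^*)$ and $\lambda^{**} - \lambda_n \geq 1/(N_nee^{**})$ from Corollary~\ref{Cindexrelations}(iii),(iv), with equality exactly when $v_f^* = v_f'$ and $v_f^{**} = v_f''$ respectively, translate directly into the stated conditions. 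For the Type II case of part (iii), $T_e = s\phi_n^e$ has $\nu_{D_{\alpha_n}}(T_e) = e \geq 2$ (since $f$ is a proper key polynomial over $v_n$, i.e., $\deg(f) > \deg(\phi_n)$), so Lemma~\ref{Ldivstructure}(i) gives $T_e \in \mf{m}_{\mc{Y},y}^2$ automatically. Finally, part (iv) follows from Lemma~\ref{Ldivstructure}(iii) once parts (ii),(iii) identify $\divi(T_0) = V^*$ and $\divi(T_e) = V^{**}$ under the stated hypotheses.

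The main obstacle is the initial bookkeeping of divisors through $y$ and the verification that each $T_i$ is actually regular at $y$. Once those structural points are pinned down (they ultimately reduce to the non-negativity of $v_f^*(T_i)$ and $v_f^{**}(T_i)$ derived in the second paragraph), the four conclusions are immediate consequences of the technical Mac Lane machinery already assembled in \S\ref{Shorizontal}.
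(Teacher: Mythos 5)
Your proposal is correct and follows essentially the same route as the paper's proof: reduce each part to the multiplicities of the $v_f^*$- and $v_f^{**}$-components (and, in Type II, of $D_{\alpha_n}$) in the local divisor of each term $sa_i\phi_n^{i-b}$, compute these via Lemma~\ref{Lfphie}, Lemma~\ref{Lpreindexrelations} and Corollary~\ref{Cindexrelations}, and conclude with Lemma~\ref{Ldivstructure}, your only real deviation being the Type II case of part (iii), where you replace the paper's observation that $s\phi_n^e=(t\phi_n)^e$ is an $e$th power by the equivalent count $\nu_{D_{\alpha_n}}(s\phi_n^e)=e\geq 2$. The one loose end is that you state the dichotomy of part (ii) only for Type I models, but your own computations (namely $\nu_{V^*}(T_0)=N_ne^*e(\lambda_n-\lambda^*)\geq 1$ with equality iff $v_f^*=v_f'$, together with $\nu_{D_{\alpha_n}}(T_0)=0$) settle the Type II case verbatim, exactly as in the paper, so this is a gap in exposition rather than in substance.
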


\begin{proof}
Let $y$ be the point where $D_{\alpha}$ intersects the special fiber
of $\mc{Y}$.  Recall from Proposition~\ref{Pfhorizontal} that
$sf/\phi_n^b$ cuts out $D_{\alpha}$.  By Remark~\ref{Rallthree}, the horizontal part of
$\divi(s)$ does not contain $y$.  The
same is true for all of the $\divi(a_i)$, since the $a_i$ have degree
less than $\phi_n$ by definition.  Furthermore, Corollary~\ref{Cnophii}(ii) shows
that the same is true for the horizontal part of $\divi(\phi_n)$ if
$\mc{Y}$ is Type I.  

By Proposition~\ref{Palphaspecialize}, $y$ is the intersection of the
$v_f^*$- and $v_f^{**}$-components if $\mc{Y}$ is Type I, and $y$ lies
on only the $v_f^*$-component of $\mc{Y}$ is Type II.  Write $D^*$ and
$D^{**}$ for the prime divisors corresponding to the $v_f^*$- and $v_f^{**}$-components, respectively.

We now prove part (i).  Assume $1 \leq i \leq e-1$. 
By
Lemma~\ref{Lfphie}(i), (ii), 
$v_f^*(f) < v_f^*(a_i\phi_n^i)$,
and since the divisor of $sf/\phi_n^b$ is horizontal by construction, so $0 = v_f^*(sf/\phi_n^b) < v_f^*(sa_i\phi_n^{i-b})$.  Thus $D^*$ lies in
the support of $sa_i\phi_n^{i-b}$.  If $\mc{Y}$ is Type I, the same is
true for $D^{**}$ using Lemma~\ref{Lfphie}(iii), (iv).  Since no
horizontal component of $\divi(sa_i\phi_n^{i-b})$ passes through $y$, we have that $sa_i\phi_n^{i-b} \in \hat{\mc{O}}_{\mc{Y},y}$ and thus,
Lemma~\ref{Ldivstructure}(i) shows that $sa_i\phi_n^{i-b} \in
\mf{m}_{\mc{Y}, y}^2$.  On the other hand, if $\mc{Y}$ is Type II,
then Corollary~\ref{Cnophii}(iii) shows that the horizontal part of
$\divi_0(\phi_n)$ does pass through $y$.  In this case,
Lemma~\ref{Ldivstructure}(ii) shows that $sa_i\phi_n^i \in
\mf{m}_{\mc{Y}, y}^2$.  This concludes the proof of part (i).

For part (ii), Lemma~\ref{Lfphie}(i), (ii) show as above that $D^*$ is in the
support of $\divi(sa_0\phi_n^{-b})$.  If $\mc{Y}$ is Type I, then Lemma~\ref{Lfphie}(iii) shows
that $v_f^{**}(f) = v_f^{**}(a_0)$, so $0 = v_f^{**}(sf/\phi_n^b) =
v_f^{**}(sa_0\phi_n^{-b})$, meaning that $D^{**}$ is \emph{not} in the
support of $\divi(sa_0\phi_n^{-b})$.  Observe further that the horizontal support of $\divi(sa_0\phi_n^{-b})$
does not pass through $y$, regardless of whether $\mc{Y}$ is Type I or
Type II.  This means that we have $sa_i\phi_n^{i-b} \in \hat{\mc{O}}_{\mc{Y},y}$ and by
Lemma~\ref{Ldivstructure}(i), we thus have $sa_0\phi_n^{-b} \in
\mf{m}_{\mc{Y}, y}^2$ if and only if the multiplicity of
$D^*$ in $\divi(sa_0\phi_n^{-b})$ is at least $2$.  

By Corollary~\ref{CvalueofN} and Lemma~\ref{Lpreindexrelations}, the
multiplicity of $D^*$ in the special fiber is $N_ne^*$,
so its multiplicity in $\divi(sa_0\phi_n^{-b})$ is
$N_ne^*v_f^*(sa_0\phi_n^{-b})$.  Since $v_f^*(sf/\phi_n^b) = 0$,
$v_f^*(a_0) = v_f^{**}(a_0) = e\lambda_n$ (Lemma~\ref{Lfphie}(iii)),
and $v_f^*(f) = e \lambda^*$ (Lemma~\ref{Lfphie}(i)), we
have

\begin{align*}
  N_n e^* v_f^*(sa_0 \phi_n^{-b}) &= N_n e^* v_f^*(a_0/f) \\
  &= N_n e^* (e\lambda_n - e\lambda^*) \\
                                  &\geq 1.
\end{align*}
where the inequality follows from Corollary~\ref{Cindexrelations}(iii), and equality holds
if and only if $v_f^* = v_f'$.  So the multiplicity of $D^*$ in
$\divi(sa_o\phi_n^{-b})$ is at least $2$ if and only if $v_f^* \neq v_f'$, finishing
part (ii).

For part (iii), first suppose $\mc{Y}$ is Type I.  Then
Lemma~\ref{Lfphie}(i), (iii), (iv) show using similar reasoning to part
(ii) that $D^{**}$ is in the support of
$\divi(s\phi_n^{e-b})$ but $D^*$ is not.  This proves the first
assertion of part (iii).  Since the horizontal part of
$\divi(s\phi_n^{e-b})$ does not pass through $y$, the same reasoning
as in part (ii) reduces us to showing that the multiplicity of
$D^{**}$ in $\divi(s\phi_n^{e-b})$ is at least 2 if and only if
$v_f^{**} \neq v_f''$.

By Corollary~\ref{CvalueofN} and Lemma~\ref{Lpreindexrelations}, the
multiplicity of $D^{**}$ in the special fiber is $N_ne^{**}$,
so its multiplicity in $\divi(s\phi_n^{e-b})$ is
$N_ne^{**}v_f^{**}(s\phi_n^{e-b})$.  Since $v_f^{**}(sf/\phi_n^b) = 0$
and $v_f^{**}(f) = e \lambda_n$ (Lemma~\ref{Lfphie}(i)), we
have

\begin{align*}
  N_n e^{**} v_f^{**}(s\phi_n^{e-b}) &= N_n e^{**} v_f^{**}(\phi_n^e/f) \\
  &= N_n e^{**} (e\lambda^{**} - e\lambda_n) \\
                                  &\geq 1.
\end{align*}
where the inequality follows from Corollary~\ref{Cindexrelations}(iv), and equality holds
if and only if $v_f^{**} = v_f''$.  So the multiplicity of $D^{**}$ in
$\divi(s\phi_n^{e-b})$ is at least 2 if and only if $v_f^{**} \neq
v_f''$, proving part (iii) in this case.

Now suppose $\mc{Y}$ is Type II.  Then $s \phi_n^{e-b} = s\phi_n^e$,
and by Corollary~\ref{Cnophii}(iii), the horizontal part of
$\divi(s\phi_n^e)$ does meet $y$.  By Proposition~\ref{Pfhorizontal},
$s$ can be taken to be an $e$th power in $K[x]$.  Since $e \geq 2$, we
have $s \phi_n^e \in \mf{m}_{\mc{Y}, y}^e \subseteq \mf{m}_{\mc{Y},
  y}^2$, finishing the proof of part (iii).

Lastly, by the proofs of parts (ii) and (iii), if $v_f^* = v_f'$ and
$v_f^{**} = v_f''$, then $\divi(s \phi_n^{e-b}) =
D^{**}$ and $\divi(sa_0\phi_n^{-b}) = D^*$ in $\Spec
\hat{\mc{O}}_{\mc{Y}, y}$.  Applying
Lemma~\ref{Ldivstructure}(iii) completes the proof of part (iv).
\end{proof}

\begin{lemma}\label{LtypeIII}
Assume the Type III model $\mc{Y}$ of $\proj^1_K$ exists.  Let $s$ be as in
Proposition~\ref{Pfhorizontal}(iii), and write $sf = s\phi_n^e +
sa_{e-1}\phi_n^{e-1} + \cdots + sa_0$ for the product
of $s$ with the $\phi_n$-adic expansion of $f$.  Then 
\begin{enumerate}[\upshape (i)]
\item $v_{n-1}'(s\phi_n^e) = v_{n-1}''(s\phi_n^e) = 0$,
\item $v_{n-1}'(sa_i\phi_n^i) > 0$ and $v_{n-1}''(sa_i\phi_n^i) > 0$ for $0 \leq i < e$.
\end{enumerate}
\end{lemma}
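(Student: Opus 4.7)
The plan is to prove (i) and (ii) simultaneously by showing that $s\phi_n^e$ is the unique term in the given expansion of $sf$ with minimum value under each of $v_{n-1}'$ and $v_{n-1}''$, and that this common minimum equals $0$. The ``equals zero'' part is immediate: by Proposition~\ref{Pfhorizontal}(iii), $sf$ locally cuts out $D_{\alpha}$ at its specialization point $y$ on $\mc{Y}$, and by Proposition~\ref{Palphaspecialize}(iii), $y$ lies on both the $v_{n-1}'$- and $v_{n-1}''$-components, so $v_{n-1}'(sf) = v_{n-1}''(sf) = 0$. Once uniqueness of the minimizing term is established, $s\phi_n^e$ will also have valuation $0$ under both $v_{n-1}'$ and $v_{n-1}''$, and every other term will have strictly positive valuation, giving both (i) and (ii).

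For uniqueness, write $\mu' = v_{n-1}'(\phi_{n-1})$ and $\mu'' = v_{n-1}''(\phi_{n-1})$; by the construction of the successor and precursor valuations (Definition~\ref{Dstar}, as in \S\ref{Scontractions}) we have $\mu' < \lambda_{n-1} < \mu''$. Let $e_{n-1} = e(v_{n-1}/v_{n-2}) = \deg(\phi_n)/\deg(\phi_{n-1})$ by Lemma~\ref{Lfdegree}(iii). Applying Lemma~\ref{Lfdegree}(ii) to the key polynomial $\phi_n$ over $v_{n-1}$ and inspecting its $\phi_{n-1}$-adic expansion, a short case analysis (using the augmented-valuation formula for $v_{n-1}'$ and $v_{n-1}''$) yields $v_{n-1}'(\phi_n) = e_{n-1}\mu'$ (attained only by the leading $\phi_{n-1}^{e_{n-1}}$ term) and $v_{n-1}''(\phi_n) = e_{n-1}\lambda_{n-1}$ (attained only by the constant term). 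Moreover, the augmentation rule for $v_n = [v_{n-1},\, v_n(\phi_n) = \lambda_n]$ forces $\lambda_n > v_{n-1}(\phi_n) = e_{n-1}\lambda_{n-1}$; this strict inequality will drive the estimates below.

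The main step is to show, for $0 \leq i < e$, that $v_{n-1}'(a_i) > (e-i)e_{n-1}\mu'$ and $v_{n-1}''(a_i) > (e-i)e_{n-1}\lambda_{n-1}$; combined with the values of $v_{n-1}'(\phi_n)$ and $v_{n-1}''(\phi_n)$ computed above, these immediately yield $v_{n-1}'(a_i\phi_n^i) > v_{n-1}'(\phi_n^e)$ and similarly for $v_{n-1}''$. The starting point is Lemma~\ref{Lfdegree}(ii) applied to the key polynomial $f$ over $v_n$, which gives $v_n(a_i) \geq (e-i)\lambda_n$, and since $\deg a_i < \deg \phi_n$ we also have $v_{n-1}(a_i) \geq (e-i)\lambda_n$. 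Writing the $\phi_{n-1}$-adic expansion $a_i = \sum_{j=0}^{r_i} c_{i,j}\phi_{n-1}^j$ with $r_i \leq e_{n-1}-1$, these bounds give $v_{n-2}(c_{i,j}) \geq (e-i)\lambda_n - j\lambda_{n-1}$ for each $j$. Substituting into the augmented-valuation formula, the bound for $v_{n-1}''$ is immediate: its minimum is attained at $j = 0$ (since $\mu'' > \lambda_{n-1}$) and yields $v_{n-1}''(a_i) \geq (e-i)\lambda_n > (e-i)e_{n-1}\lambda_{n-1}$. The bound for $v_{n-1}'$ (whose minimum is attained at $j = r_i$ since $\mu' < \lambda_{n-1}$) reduces to $(e-i)(\lambda_n - e_{n-1}\mu') > r_i(\lambda_{n-1}-\mu')$, which follows from the chain
\[
(e-i)(\lambda_n - e_{n-1}\mu') > (e-i)e_{n-1}(\lambda_{n-1}-\mu') \geq e_{n-1}(\lambda_{n-1}-\mu') > r_i(\lambda_{n-1}-\mu')
\]
using $\lambda_n > e_{n-1}\lambda_{n-1}$ and $r_i < e_{n-1}$. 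This last delicate estimate, marrying the $\phi_n$-adic structure from $f$ being a key polynomial over $v_n$ with the $\phi_{n-1}$-adic structure of the $a_i$, is the main obstacle.
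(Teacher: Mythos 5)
Your proposal is correct and takes essentially the same approach as the paper: obtain $v_{n-1}'(sf)=v_{n-1}''(sf)=0$ from Propositions~\ref{Pfhorizontal}(iii) and \ref{Palphaspecialize}(iii), then show that $s\phi_n^e$ is the unique term of minimal $v_{n-1}'$- and $v_{n-1}''$-valuation using Lemma~\ref{Lfdegree}(ii) together with the $\phi_{n-1}$-adic expansion. The only divergence is bookkeeping in that middle estimate: the paper first proves $v_{n-1}(\phi_n^e)<v_{n-1}(a_i\phi_n^i)$ and then controls the passage to $v_{n-1}'$, $v_{n-1}''$ via the total $\phi_{n-1}$-degree of $a_i\phi_n^i$, whereas you bound the $\phi_{n-1}$-adic coefficients of each $a_i$ directly; both arguments rest on the same inputs, namely $\lambda_n > e_{n-1}\lambda_{n-1}$ and the degree bound $\deg(a_i\phi_n^i)<\deg(\phi_n^e)$.
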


\begin{proof}
By Proposition~\ref{Palphaspecialize}(iii), the divisor $D_{\alpha}$ (which
is locally the same as $\divi(sf)$) meets
the intersection of the $v_{n-1}'$- and $v_{n-1}''$-components of the
special fiber of $\mc{Y}$.  Thus $v_{n-1}'(sf) = v_{n-1}''(sf) = 0$.
So it suffices to show that, for $0 \leq i < e$, both
$v_{n-1}'(s\phi_n^e) < v_{n-1}'(sa_i\phi_n^i)$ and
$v_{n-1}''(s\phi_n^e) < v_{n-1}''(sa_i\phi_n^i),$ or equivalently,
that
\begin{equation}\label{Esuffices}
  v_{n-1}'(\phi_n^e) < v_{n-1}'(a_i\phi_n^i) \quad \text{and} \quad
  v_{n-1}''(\phi_n^e) < v_{n-1}''(a_i\phi_n^i).
\end{equation}

Fix $i$ such that $0 \leq i < e$.  We first claim that
\begin{equation}\label{Eone} v_{n-1}(\phi_n^e) < v_{n-1}(a_i\phi_n^i).\end{equation}  By Lemma~\ref{Lfdegreebasic},
$v_f(\phi_n^e) \leq v_f(a_i \phi_n^i)$.  Since $\deg(a_i) <
\deg(\phi_n)$, we have $v_{n-1}(a_i) = v_f(a_i)$.  On the other hand,
applying Lemma~\ref{Lfdegreebasic} to $\phi_n$ and $v_{n-1}$ for the
equality below, we have 
$$v_{n-1}(\phi_n) = e_{n-1}\lambda_{n-1} < v_f(\phi_n),$$ where $e_{n-1}
= \deg(\phi_n) / \deg(\phi_{n-1})$.  Write $\delta = v_f(\phi_n) -
v_{n-1}(\phi_n)$.  Since $e > i$, we have
$$v_{n-1}(\phi_n^e) = v_f(\phi_n^e) - e\delta < v_f(\phi_n^e) - i
\delta \leq v_f(a_i \phi_n^i) - i \delta = v_{n-1}(a_i \phi_n^i),$$
proving (\ref{Eone}).

Now, write $\phi_n = \phi_{n-1}^{e_{n-1}} +
b_{e_{n-1}}\phi_{n-1}^{e_{n-1} -1} + \cdots + b_0$ for the
$\phi_{n-1}$-adic expansion of $\phi_n$, and recall from
Lemma~\ref{Lfdegreebasic} that 
\begin{equation}\label{Etwo} v_{n-1}(\phi_n)=
v_{n-1}(\phi_{n-1}^{e_{n-1}}) = v_{n-1}(b_0).\end{equation}  Furthermore, the term whose valuation
decreases the most upon replacing $v_{n-1}$ with $v_{n-1}'$ is
$\phi_{n-1}^{e_{n-1}}$, and the term whose valuation increases the
least upon replacing $v_{n-1}$ with $v_{n-1}''$ is $b_0$ (since it
does not increase at all). Thus, 
\begin{equation}\label{Ethree} v_{n-1}'(\phi_n) =
v_{n-1}'(\phi_{n-1}^{e_{n-1}}) \textup{ and } v_{n-1}''(\phi_n) = v_{n-1}''(b_0) .\end{equation} 
Let $c$ be the degree of $\phi_{n-1}$ in
the $\phi_{n-1}$-adic expansion of $a_i\phi_n^i$, and note that 
\begin{equation}\label{Efour} c
< e_{n-1}e,\end{equation} since $\deg(a_i\phi_n^i) < \deg(\phi_n^e)$.  Then, 
we have
\begin{equation*}
  \begin{split}
    v_{n-1}'(\phi_n^e) \stackrel{(\ref{Ethree})}{=}  v_{n-1}'(\phi_{n-1}^{e_{n-1}e}) = v_{n-1}(\phi_{n-1}^{e_{n-1}e}) &- e_{n-1}e(\lambda_{n-1} -
\lambda_{n-1}') \stackrel{(\ref{Etwo})}{=} \\v_{n-1}(\phi_n^e) -  e_{n-1}e(\lambda_{n-1} -
\lambda_{n-1}')  &\stackrel{(\ref{Eone}),(\ref{Efour})}{<} v_{n-1}(a_i\phi_n^i) - c(\lambda_{n-1} -
\lambda_{n-1}') \leq v_{n-1}'(a_i \phi_n^i)
\end{split}
\end{equation*}
and, 
$$v_{n-1}''(\phi_n^e)  \stackrel{(\ref{Ethree})}{=} v_{n-1}''(b_0^e) = v_{n-1}(b_0^e) \stackrel{(\ref{Efour})}{=} v_{n-1}(\phi_n^e) < v_{n-1}(a_i\phi_n^i) \leq
v_{n-1}''(a_i \phi_n^i).$$  This proves (\ref{Esuffices}), and thus the lemma.
\end{proof}

\subsection{The minimal embedded resolution}\label{Sresolutionsandmodels}
In this subsection, we prove Theorem~\ref{Thorizontalregular}, and then extend that result to a proof of Theorem~\ref{Tintro}.

\begin{prop}\label{Pmodeltoosmall}
   If $\mc{Y}$ is the Type III model of $\proj^1_K$, then
   $D_{\alpha}$ is not regular on $\mc{Y}$.
\end{prop}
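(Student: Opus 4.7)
The plan is to apply Proposition~\ref{Pm2}: it suffices to exhibit a local cutting function $g$ for $D_\alpha$ at the unique point $y$ where $D_\alpha$ meets the special fiber, and to show $g \in \mathfrak{m}_{\mathcal{Y},y}^2$. By Proposition~\ref{Palphaspecialize}(iii), $y$ is the node where the $v_{n-1}'$- and $v_{n-1}''$-components meet; call these $D'$ and $D''$. By Proposition~\ref{Pfhorizontal}(iii) we may take $g = sf$ with $s \in K(x)$ whose support is locally (near $D_\alpha$) contained in the special fiber.

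First I would expand using the $\phi_n$-adic expansion of $f$:
\[
sf \;=\; s\phi_n^e \;+\; \sum_{i=0}^{e-1} s a_i \phi_n^i,
\]
and then show that each summand lies in $\mathfrak{m}_{\mathcal{Y},y}^2$ by computing its divisor locally at $y$. The key point is that since $\mathcal{Y}$ is a regular arithmetic surface, the completion $\hat{\mathcal{O}}_{\mathcal{Y},y}$ has the form $\mathcal{O}_K[[y_1,y_2]]/(y_1^{m_1}y_2^{m_2} - u\pi_K)$ (cf.\ the proof of Lemma~\ref{Ldivstructure}), so $D'$ and $D''$ are the \emph{only} vertical prime divisors through $y$. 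Moreover, by intersection theory, the degree bound $\deg a_i < N_n$ (which is strictly less than the multiplicities $N_n e'$ and $N_n e''$ of $D'$ and $D''$, by Corollary~\ref{CvalueofN} and Lemma~\ref{Lpreindexrelations}) prevents any horizontal component of $\divi(a_i)$ from specializing to $y$.

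For each lower-order term $sa_i\phi_n^i$ with $0 \leq i < e$: Lemma~\ref{LtypeIII}(ii) gives $v_{n-1}'(sa_i\phi_n^i) > 0$ and $v_{n-1}''(sa_i\phi_n^i) > 0$, so $\divi(sa_i\phi_n^i)$ has positive multiplicity on both $D'$ and $D''$ locally at $y$. The only possible horizontal contribution at $y$ is $iD_{\alpha_n}$ (coming from $\phi_n^i$, where $\alpha_n$ is a root of $\phi_n$), since $\divi(s)$ is vertical and roots of $a_i$ avoid $y$ as noted. The coefficients sum to at least $2$, so Lemma~\ref{Ldivstructure}(i) gives $sa_i\phi_n^i \in \mathfrak{m}_{\mathcal{Y},y}^2$. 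For the leading term $s\phi_n^e$: Lemma~\ref{LtypeIII}(i) gives $v_{n-1}'(s\phi_n^e) = v_{n-1}''(s\phi_n^e) = 0$, so the vertical part of the divisor at $y$ is trivial; since $D_{\alpha_n}$ passes through $y$ by Corollary~\ref{Cnophii}(iii), the local divisor is exactly $eD_{\alpha_n}$, and since $e = e(v_f/v_{n-1}) = \deg f/\deg\phi_n \geq 2$ (by $\lambda_n \notin \Gamma_{v_{n-1}}$ together with Lemma~\ref{Lfdegree}(iii)), Lemma~\ref{Ldivstructure}(i) again applies.

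Assembling these, $sf \in \mathfrak{m}_{\mathcal{Y},y}^2$, and Proposition~\ref{Pm2} yields that $D_\alpha$ is not regular. The most delicate step is bookkeeping the local divisor of each summand at $y$ — in particular ensuring that no stray horizontal components from the $a_i$ or from $s$ interfere — but once this is handled, Lemma~\ref{LtypeIII} and the bound $e\geq 2$ do the real work with no further computation required.
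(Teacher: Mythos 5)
Your proposal is correct and follows essentially the same route as the paper's proof: cut out $D_{\alpha}$ by $sf$ via Proposition~\ref{Pfhorizontal}(iii), locate $y$ via Proposition~\ref{Palphaspecialize}(iii), use the $\phi_n$-adic expansion together with Lemma~\ref{LtypeIII} and Corollary~\ref{Cnophii}(iii) to see each summand lies in $\mf{m}_{\mc{Y},y}^2$ by Lemma~\ref{Ldivstructure}(i), and conclude with Proposition~\ref{Pm2}. One inessential slip: the multiplicities $N_ne'$, $N_ne''$ you cite belong to the $v_f'$- and $v_f''$-components rather than to the $v_{n-1}'$- and $v_{n-1}''$-components of the Type III model, but that side claim is not needed, since any horizontal components of $\divi(a_i)$ through $y$ would only add nonnegative coefficients and thus only reinforce the application of Lemma~\ref{Ldivstructure}(i).
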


\begin{proof}
  By Proposition~\ref{Palphaspecialize}(iii), $D_{\alpha}$ meets
the intersection $y$ of the $v_{n-1}'$- and $v_{n-1}''$-components of
the special fiber of $\mc{Y}$.  Let $D'$ and $D''$ be the respective
corresponding Weil prime divisors on $\mc{Y}$.

Let $s$ be as in Lemma~\ref{LtypeIII}.  Write $f =
  \phi_n^e + a_{e-1}\phi_n^{e-1} + \ldots + a_0$, and set $a_e = 1$.  By
  Proposition~\ref{Pfhorizontal}(iii), $sf$ cuts out $D_{\alpha}$
  locally, so by Proposition~\ref{Pm2}, it suffices to show that $sa_i
  \phi_n^i \in \mf{m}_{\mc{Y}, y}^2$ for $0 \leq i \leq e$.   By
  Lemma~\ref{LtypeIII}, neither $D'$ nor $D''$ appears with a negative
  coefficient in any $\divi(sa_i\phi_n^i)$.  

Recall that in $\Spec \hat{\mc{O}}_{\mc{Y}, y}$, the support of $s$ is
contained in the special fiber and, by Corollary~\ref{Cnophii}(iii),
$y$ is in the support of
the horizontal part $D_{\alpha_n}$ of $\divi(\phi_n)$.  Since $e
\geq 2$, the divisor of $s\phi_n^e$ is at least $e
D_{\alpha_n} \geq 2 D_{\alpha_n}$.  By Lemma~\ref{Ldivstructure}(i),
$s\phi_n^e \in \mf{m}_{\mc{Y}, y}^2$.  If $0 \leq i \leq e-1$,
Lemma~\ref{LtypeIII}(ii) shows that both $D'$ and $D''$ lie in
the support of $\divi(s a_i \phi_n^i)$.  We again use Lemma~\ref{Ldivstructure}(i) to conclude
that $sa_i\phi_n^i \in \mf{m}_{\mc{Y}, y}^2$.
\end{proof}

\begin{corollary}\label{CIorII}
If $\mc{Y}$ is a non-trivial regular contraction of $\mc{Y}_{v_f}^{\reg}$ on which $D_{\alpha}$ is regular, then $\mc{Y}$ is Type I or Type II.
\end{corollary}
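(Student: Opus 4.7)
The plan has two steps. First I would argue that any non-trivial regular contraction $\mc{Y}$ of $\mc{Y}_{v_f}^{\reg}$ must exclude the $v_f$-valuation, and then combine the classification in Definition~\ref{D3types} with Proposition~\ref{Pmodeltoosmall} to finish.

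For the first step, the crucial input is the defining property of the minimal regular resolution $\mc{Y}_{v_f}^{\reg} \to \mc{Y}_{v_f}$: its exceptional locus, which equals the union of all components of the special fiber of $\mc{Y}_{v_f}^{\reg}$ other than the $v_f$-component, contains no $-1$-curves. Hence the only potential $-1$-curve in the special fiber of $\mc{Y}_{v_f}^{\reg}$ is the $v_f$-component itself. By Castelnuovo's contractibility criterion for arithmetic surfaces, any non-trivial regular contraction $\mc{Y}_{v_f}^{\reg} \to \mc{Y}$ factors as a sequence of blowdowns of $-1$-curves, so its very first step must contract the $v_f$-component. Therefore $v_f \notin \mc{Y}$.

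For the second step, since $v_f \notin \mc{Y}$, Definition~\ref{D3types} partitions the remaining possibilities according to which of the families $\{v_{n,\lambda}\}$ and $\{w_{n-1,\lambda}\}$ still have representatives in $\mc{Y}$: if both, $\mc{Y}$ is Type I; if only $\{w_{n-1,\lambda}\}$, it is Type II; if neither, it is the Type III model. The Type III case is ruled out by Proposition~\ref{Pmodeltoosmall}, which shows that $D_{\alpha}$ fails to be regular on the Type III model and so this model lies outside the pool of contractions of interest. Thus $\mc{Y}$ must be Type I or Type II.

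The main obstacle I foresee is handling the asymmetric case in which $\mc{Y}$ retains some $v_{n,\lambda}$ but no $w_{n-1,\lambda}$. I would rule this out by tracking self-intersections along the dual graph of Figure~\ref{Fresolutiongraph}: successively contracting the horizontal chain of $w_{n-1,\lambda}$'s eventually reaches the junction at $v_{n-1}$, and blowing $v_{n-1}$ down then triggers further $-1$-curve contractions that propagate into the vertical chain, so the asymmetric case collapses to the Type III scenario, to which Proposition~\ref{Pmodeltoosmall} once again applies.
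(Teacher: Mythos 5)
Your skeleton matches the paper's: the $v_f$-component is the unique possible $-1$-curve on $\mc{Y}_{v_f}^{\reg}$ by minimality (the paper leaves this to its outline), and the case where neither a $v_{n,\lambda}$ nor a $w_{n-1,\lambda}$ survives is disposed of via Proposition~\ref{Pmodeltoosmall}. One small inaccuracy there: such a model need not \emph{be} the Type III model, it is only \emph{dominated} by it, so you still need Lemma~\ref{Lcanblowup} to push the non-regularity of $D_{\alpha}$ from the Type III model down to its further contractions; this is exactly how the paper closes that case. Note also that, like the paper's own proof, your argument really establishes that a non--Type I/II contraction cannot carry a regular $D_{\alpha}$, which is the form in which the corollary is used later.

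The genuine gap is your treatment of the asymmetric case (some $v_{n,\lambda}$ survives but no $w_{n-1,\lambda}$). First, the mechanism you propose points the dominance relation the wrong way: such a model contains a $v_{n,\lambda}$-component that the Type III model lacks, so it is \emph{not} dominated by the Type III model, and Proposition~\ref{Pmodeltoosmall} plus Lemma~\ref{Lcanblowup} (which transfers non-regularity only from a model to its contractions) say nothing about it; the fact that \emph{further} blow-downs eventually reach the Type III configuration is irrelevant to whether the intermediate asymmetric model itself is Type I/II or carries a regular $D_{\alpha}$. Second, the propagation picture is not correct: the $v_{n,\lambda}$-chain hangs off the $v_f$-component, not off $v_{n-1}$ (Figure~\ref{Fresolutiongraph}), and after $v_f$ is blown down the next $-1$-curve is dictated by multiplicities, so the process does not simply eat the horizontal chain and then spill into the vertical one in a way that excludes asymmetric models --- on the contrary, it can produce them. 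For instance, for $v_f=[v_0,\,v_1(x)=2/5]$ the resolution is a chain with parameters $0,1/3,2/5,1/2,1$ and multiplicities $1,3,5,2,1$; the forced blow-downs contract $2/5$, then $1/3$, then $1/2$, and then one may contract the $v_0$-end, arriving at the regular $[v_0,\,v_1(x)=1]$-model, a non-trivial regular contraction containing a $v_{n,\lambda}$ and no $w_{n-1,\lambda}$ whatsoever. So the asymmetric case cannot be ``ruled out'' by tracking $-1$-curves; it must be handled directly, either by showing that the contractions relevant to the problem (those on which $D_{\alpha}$ is regular, equivalently those considered in the paper's assertion that a non--Type I/II contraction includes none of the $w_{n-1,\lambda}$ or $v_{n,\lambda}$, with $v_{n-1}$ counted among the $w_{n-1,\lambda}$ as in Definition~\ref{D3types}) never keep a $v_{n,\lambda}$ after all $w_{n-1,\lambda}$ are gone, or by a separate computation showing $D_{\alpha}$ fails to be regular on such models; your proposal supplies neither.
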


\begin{proof}
Suppose $\mc{Y}$ is a non-trivial regular contraction of $\mc{Y}_{v_f}^{\reg}$ that is not Type I or Type II.  Then in the language of
Proposition~\ref{Pgeneralresolution} and Corollary~\ref{Cthrowin0} applied to $v_f$, the model $\mc{Y}$
includes none of the $w_{n-1, \lambda}$ or $v_{n, \lambda}$.  Thus
$\mc{Y}$ is dominated by the unique Type III model $\mc{Z}$ of $\proj^1_K$, given that $\mc{Z}$ includes exactly those
valuations in $\mc{Y}_{v_n,0}^{\reg}$ that are not among the $w_{n-1,
  \lambda}$ or $v_{n, \lambda}$.  By Proposition~\ref{Pmodeltoosmall},
$D_{\alpha}$ is not regular on $\mc{Z}$.  By Lemma~\ref{Lcanblowup}, $D_{\alpha}$ is
therefore not regular on any regular contraction of $\mc{Z}$, which finishes the proof.
\end{proof}

%

%

\begin{prop}\label{Pregular}
 Suppose $\mc{Y}$ is a nontrivial regular contraction of $\mc{Y}_{v_f}^{\reg}$.  Then
$D_{\alpha}$ is regular on $\mc{Y}$ if and only if $\mc{Y}$ includes
$v_f'$ or $v_f''$.
\end{prop}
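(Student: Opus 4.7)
The plan is to reduce to a computation in the cotangent space $\mf{m}_{\mc{Y},y}/\mf{m}_{\mc{Y},y}^2$ at the specialization point $y$ of $D_\alpha$, using the machinery already set up in Lemma~\ref{Lindividualterms}. By Corollary~\ref{CIorII} I may assume that $\mc{Y}$ is of Type~I or Type~II, and by Proposition~\ref{Pm2} it suffices to test whether a local equation $g$ of $D_\alpha$ at $y$ lies in $\mf{m}_{\mc{Y},y}^2$. I will take $g = sf/\phi_n^b$ from Proposition~\ref{Pfhorizontal} (with $b=0$ in the Type~II case) and expand $f$ in $\phi_n$-adic form to write
\[
g = s\phi_n^{e-b} \;+\; \sum_{i=1}^{e-1} s a_i \phi_n^{i-b} \;+\; s a_0 \phi_n^{-b}.
\]
By Lemma~\ref{Lindividualterms}(i) every middle term lies in $\mf{m}_{\mc{Y},y}^2$, so the image of $g$ in $\mf{m}_{\mc{Y},y}/\mf{m}_{\mc{Y},y}^2$ agrees with the image of the extremal sum $s\phi_n^{e-b} + s a_0 \phi_n^{-b}$.

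Next I will record the dictionary between the auxiliary valuations $v_f^{*}, v_f^{**}$ of Definition~\ref{Dstar} and the hypothesis of the proposition. Since $\lambda^{*}$ is, by construction, the largest value for which $w_{n-1,\lambda^{*}}$ is included in $\mc{Y}$, while $\lambda'$ is the largest such value occurring in the full dual graph of $\mc{Y}_{v_f}^{\reg}$, one has $v_f^{*}=v_f'$ if and only if $\mc{Y}$ includes $v_f'$. An analogous argument, now applied to the $v_{n,\lambda}$'s and to $\lambda''$ in place of the $w_{n-1,\lambda}$'s and $\lambda'$, shows that on a Type~I model $v_f^{**}=v_f''$ if and only if $\mc{Y}$ includes $v_f''$. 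Finally, a Type~II model includes no $v_{n,\lambda}$ at all, so it never includes $v_f''$, and the condition ``$\mc{Y}$ includes $v_f'$ or $v_f''$'' in the Type~II case simply reduces to ``$\mc{Y}$ includes $v_f'$''.

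The conclusion is then a short case analysis using parts (ii)--(iv) of Lemma~\ref{Lindividualterms}. In the Type~II case part~(iii) forces $s\phi_n^e \in \mf{m}_{\mc{Y},y}^2$ unconditionally, so $g \in \mf{m}_{\mc{Y},y}^2$ iff $sa_0 \in \mf{m}_{\mc{Y},y}^2$, which by part~(ii) happens iff $v_f^{*} \neq v_f'$; combined with the dictionary this is precisely the failure of the hypothesis. In the Type~I case, parts (ii) and (iii) pin down exactly when each of the two extremal terms lies in $\mf{m}_{\mc{Y},y}^2$, and part~(iv) supplies the linear independence needed when both survive; hence $g \in \mf{m}_{\mc{Y},y}^2$ if and only if both extremal terms do, that is, if and only if $v_f^{*} \neq v_f'$ \emph{and} $v_f^{**} \neq v_f''$, which via the dictionary is again the failure of the hypothesis. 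I do not expect a serious obstacle here: once Lemma~\ref{Lindividualterms} is available, the only real subtlety is the translation between the $*,**$ notation and the successor/precursor valuations $v_f', v_f''$, which is dictated by the shape of the dual graph in Figure~\ref{Fresolutiongraph}.
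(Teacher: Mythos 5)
Your proposal is correct and follows essentially the same route as the paper's own proof: reduce to Type I/II via Corollary~\ref{CIorII}, use Proposition~\ref{Pfhorizontal} and Proposition~\ref{Pm2} to translate regularity of $D_{\alpha}$ into $sf/\phi_n^b \notin \mf{m}_{\mc{Y},y}^2$, and then apply parts (i)--(iv) of Lemma~\ref{Lindividualterms} together with the dictionary identifying $v_f^*=v_f'$ (resp.\ $v_f^{**}=v_f''$) with inclusion of $v_f'$ (resp.\ $v_f''$) in $\mc{Y}$. Your explicit remarks that a Type II model never includes $v_f''$ and that the one-term-surviving case needs no linear independence are points the paper leaves implicit, but the argument is the same.
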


\begin{proof}
  By Corollary~\ref{CIorII}, we may assume that $\mc{Y}$ is either Type I or Type II.
  We show that if $\mc{Y}$ is Type I (resp.\ Type II), then $D_{\alpha}$ is regular on $\mc{Y}$ if and only if $\mc{Y}$ includes $v_f'$ or $v_f''$ (resp.\ $v_f'$).  This yields the proposition.

  Let $y \in \mc{Y}$ be the point where $D_{\alpha}$ meets the special
fiber, and let $s$, $b$, and the $a_i$ be as in Lemma~\ref{Lindividualterms}.
  By Propositions~\ref{Pfhorizontal} and \ref{Pm2}, $D_{\alpha}$ being regular is equivalent to $sf/\phi_n^b
\notin \mf{m}_{\mc{Y}, y}^2$.  By Lemma~\ref{Lindividualterms}(i),
this is equivalent to $s\phi_n^{e-b} + sa_0\phi_n^{-b} \notin
\mf{m}_{\mc{Y},y}^2$.  By Lemma~\ref{Lindividualterms}(ii), (iii), $s\phi_n^{e-b} + sa_0\phi_n^{-b} \notin
\mf{m}_{\mc{Y},y}^2$ implies either $v_f^* = v_f'$, or $\mc{Y}$ is
Type I and $v_f^{**} = v_f''$.  
If $\mc{Y}$ is Type II, the reverse
implication also follows from Lemma~\ref{Lindividualterms}(ii), (iii),
and if $\mc{Y}$ is Type I, the reverse implication follows from
Lemma~\ref{Lindividualterms}(iv).
We have shown that $D_{\alpha}$ is regular if and only if $v_f^* =
v_f'$ or $\mc{Y}$ is Type I and $v_f^{**} = v_f''$.  By the definition
of $v_f^*$ and Type I/II models, $v_f^* = v_f'$ is equivalent to
$v_f'$ being included in $\mc{Y}$.  Likewise, if $\mc{Y}$ is Type I, then
$v_f^{**} = v_f''$ is equivalent to $\mc{Y}$ including $v_f''$.  This finishes the proof  
\end{proof}

Since $\mc{Y}_{v_f', 0}^{\reg}$ is a blowup of $\mc{Y}_{v_f'}^{\reg}$
(and similarly for $\mc{Y}_{v_f'',0}^{\reg}$), the following corollary is immediate.
\begin{corollary}\label{Cimmediate}
The divisor $D_{\alpha}$ is regular on $\mc{Y}_{v_f'}^{\reg}$,
$\mc{Y}_{v_f',0}^{\reg}$, $\mc{Y}_{v_f''}^{\reg}$, and on $\mc{Y}_{v_f'',0}^{\reg}$. 
\end{corollary}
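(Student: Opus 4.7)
The plan is to deduce the corollary from Proposition~\ref{Pregular} for the two unsubscripted models and then invoke Lemma~\ref{Lcanblowup} (combined with the hint that $\mc{Y}_{v_f',0}^{\reg}$ is a blowup of $\mc{Y}_{v_f'}^{\reg}$) for the two ``$0$''-variants.

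To apply Proposition~\ref{Pregular} to $\mc{Y}_{v_f'}^{\reg}$ and $\mc{Y}_{v_f''}^{\reg}$, I must first verify that these are \emph{nontrivial} regular contractions of $\mc{Y}_{v_f}^{\reg}$. Since $\mc{Y}_{v_f}^{\reg}$ includes both $v_f' = w_{n-1,\lambda'}$ and $v_f'' = v_{n,\lambda''}$ by Proposition~\ref{Pgeneralresolution}, it dominates both $\mc{Y}_{v_f'}$ and $\mc{Y}_{v_f''}$, hence also dominates their minimal regular resolutions. For nontriviality, it suffices to show that $v_f$ is not included in $\mc{Y}_{v_f'}^{\reg}$ (and symmetrically for $\mc{Y}_{v_f''}^{\reg}$). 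This follows from Proposition~\ref{Pmultiplicitydecreases}: every valuation $w$ included in $\mc{Y}_{v_f',0}^{\reg}$ satisfies $e(w/v_0) \leq e(v_f'/v_0) = N_n e'$, and the strict inequality $e' < e$ (since $\tilde{\lambda}'$ strictly precedes $\tilde{\lambda}_n$ on the shortest $1$-path to $0$ by Proposition~\ref{Pprimespaths}(i), forcing the denominator $e'$ to be strictly smaller than $e$) yields $e(w/v_0) < N_n e = e(v_f/v_0)$. Thus $v_f \notin \mc{Y}_{v_f',0}^{\reg}$, and \emph{a fortiori} $v_f \notin \mc{Y}_{v_f'}^{\reg}$. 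The argument for $\mc{Y}_{v_f''}^{\reg}$ is identical. Since $\mc{Y}_{v_f'}^{\reg}$ includes $v_f'$ and $\mc{Y}_{v_f''}^{\reg}$ includes $v_f''$ by construction, Proposition~\ref{Pregular} then gives that $D_{\alpha}$ is regular on both.

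For the ``$0$''-variants, Corollary~\ref{Cthrowin0} (applied to $v_f'$ and $v_f''$) describes $\mc{Y}_{v_f',0}^{\reg}$ as the model obtained from $\mc{Y}_{v_f'}^{\reg}$ by adjoining at most $v_0$ together with certain intermediate valuations $[v_0,\, v_1(\phi_1) = \lambda]$ for integers $\lambda$ on a shortest $1$-path, with the analogous statement for $\mc{Y}_{v_f'',0}^{\reg}$. In particular, there is a regular modification $\mc{Y}_{v_f',0}^{\reg} \to \mc{Y}_{v_f'}^{\reg}$, and likewise for the double-prime version. Applying Lemma~\ref{Lcanblowup} with the already-regular horizontal divisor $D_{\alpha}$ as input then shows its strict transform remains regular on each ``$0$''-model, completing the proof.

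The only substantive step is the nontriviality verification — really just the inequalities $e' < e$ and $e'' < e$ coming from the Farey-path structure of Proposition~\ref{Pprimespaths}; everything else is a direct application of previously established machinery, which is why the author labels the corollary as immediate.
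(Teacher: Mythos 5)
Your proposal is correct and takes essentially the same route as the paper, which treats the corollary as immediate from Proposition~\ref{Pregular} together with Lemma~\ref{Lcanblowup} and the fact that $\mc{Y}_{v_f',0}^{\reg}$ (resp.\ $\mc{Y}_{v_f'',0}^{\reg}$) is a blowup of $\mc{Y}_{v_f'}^{\reg}$ (resp.\ $\mc{Y}_{v_f''}^{\reg}$); you simply spell out the hypothesis checks (domination via the valuation-set description plus the universal property of minimal resolutions, and nontriviality via Proposition~\ref{Pmultiplicitydecreases}). The one step you assert rather than prove --- that $e' < e$ and $e'' < e$, i.e.\ that denominators drop as one moves off $\tilde{\lambda}_n$ along the shortest $1$-paths toward $0$ and toward $1$ --- is true (cf.\ Example~\ref{Efarey} and the Farey structure of shortest $1$-paths in the Obus--Wewers appendix), but it does merit an explicit citation or short argument, since Proposition~\ref{Pprimespaths}(i) alone does not literally ``force'' it.
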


We now have the main result of the paper.

\begin{proof}[Proof of Theorem~\ref{Thorizontalregular}]
By Corollary~\ref{Cimmediate}, both $\mc{Y}_{v_f',0}^{\reg}
\to \mc{X}$ and $\mc{Y}_{v_f'',0}^{\reg} \to \mc{X}$ are embedded
resolutions of $\divi_0(f)$.  Since $\mc{Y}_{v_f',0}^{\reg}$ and
$\mc{Y}_{v_f'',0}^{\reg}$ are both contractions of
$\mc{Y}_{v_f,0}^{\reg}$, the minimal embedded resolution is as
well.  By Corollary~\ref{CIorII}, the minimal embedded resolution
includes either $v_f'$ or $v_f''$.  It obviously includes $v_0$ as
well, so it is either $\mc{Y}_{v_f',0}^{\reg}$ or
$\mc{Y}_{v_f'',0}^{\reg}$.  In particular, one of these models
dominates the other, and the dominated one is the minimal embedded
resolution.

Suppose $e(v_f'/v_0) \leq e(v_f''/v_0)$ as in part (i).  Since $v_f' \prec v_f''$, 
Proposition~\ref{Pmultiplicitydecreases} applied to $v_f'$ shows that $v_f''$ is not
included in $\mc{Y}_{v_f',0}^{\reg}$, which shows that
$\mc{Y}_{v_f',0}^{\reg}$ is the dominated one, thus proving the
theorem.  If $e(v_f'/v_0) > e(v_f''/v_0)$ as in part (ii), then the same proposition applied
to $v_f''$ shows that $v_f'$ is not included in
$\mc{Y}_{v_f'',0}^{\reg}$, showing that $\mc{Y}_{v_f'',0}^{\reg}$ is
the dominated one, again proving the theorem.
\end{proof}


\begin{remark}\label{Rreducible}
Given Theorem~\ref{Thorizontalregular} and Remark~\ref{Retaledescent}, and assuming $k$ is algebraically closed, one can construct a minimal embedded resolution of $(\P^1_{\mc{O}_K}, \divi_0(f))$ for arbitrary squarefree $f \in \mc{O}_K[x]$ as follows.

First, one can always make a change of variables by taking some $\gamma \in PGL_2(\mc{O}_K)$ such that the zeroes of the rational function $f(\gamma x)$ all lie in $\mc{O}_K$.  Replacing $f$ by the numerator of $f(\gamma x)$, we may thus assume that all roots of $f$ lie in $\mc{O}_K[x]$.
Letting $\pi_K$ be a uniformizer of $K$, we then have the irreducible factorization $f =  \pi_K^b f_1 \cdots f_r \in \mc{O}_K[x]$, where all $f_i$ monic and distinct, $\pi_K$ is a uniformizer of $K$, and $b \in \{0, 1\}$, since $f$ is squarefree.  Let $\mc{Y}_i$ be the minimal embedded resolution of $(\proj^1_{\mc{O}_K}, \divi_0(f_i))$, and let $\mc{Y}'$ be the minimal normal model of $\proj^1_K$ dominating all $\mc{Y}_i$.  Then $\mc{Y}'$ is regular (see, e.g., \cite[Lemma~5.3]{OS1}), and the minimal embedded resolution of $(\proj^1_{\mc{O}_K}, \divi_0(f))$ is the minimal blowup $\mc{Y} \to \mc{Y}'$ separating the strict transforms of $\divi_0(\pi_K)$ and the $\divi_0(f_i)$ on $\mc{Y}'$.  Thus, neither the irreducibility nor the monicity of $f$ is a serious condition, but the statement of Theorem~\ref{Thorizontalregular} is much cleaner when they are in place. 
\end{remark}

\begin{remark}\label{Retaledescent}
  Regular resolutions satisfy \'{e}tale descent.  That is, if $L/K$ is an unramified, algebraic field extension and $f \in \mc{O}_K[x]$ is a monic irreducible polynomial, then $\mc{Y}$ is an embedded resolution of $(\proj^1_{\mc{O}_{K}}, \divi_0(f))$ if and only if $\mc{Z} := \mc{Y} \times_{\mc{O}_K} \mc{O}_L$ is an embedded resolution of $(\proj^1_{\mc{O}_L}, \divi_0(f))$, in which case we have $\mc{Z} \cong (\mc{Y} \times_{\mc{O}_K} \mc{O}_L) / \Gal(L/K)$.  Moreover, the geometric valuations corresponding to the irreducible components of $\mc{Y}$ are obtained by restricting the Mac Lane valuations included in $\mc{Z}$ to $K(x)$.

\begin{proof}[Proof of Theorem 1.1]
  Suppose $K$ is a complete discrete valuation field with \emph{perfect} residue field $k$, and that $f \in K[x]$.  If $K^{ur}$ is the completion of the maximal unramified extension of $K$, then Theorem~\ref{Thorizontalregular} and Remark~\ref{Rreducible} allow us to construct the minimal regular resolution $\mc{Z}$ of $(\proj^1_{\mc{O}_{K^{ur}}}, \divi_0(f))$.  To explicitly present the minimal regular resolution of $(\proj^1_{\mc{O}_{K^{ur}}}, \divi_0(f))$ as a collection of geometric valuations, simply let $\mc{Y}$ be the normal model of $\proj^1_K$ corresponding to the set of restrictions of all valuations included in $\mc{Z}$ to $K(x)$.  This completes the proof of Theorem~\ref{Tintro}.
\end{proof}

\end{remark}

\begin{bibdiv}
\begin{biblist}

\bib{CES}{article}{
  AUTHOR = {Conrad, Brian},
  author = {Edixhoven, Bas},
  author = {Stein, William},
     TITLE = {{$J_1(p)$} has connected fibers},
   JOURNAL = {Doc. Math.},
  FJOURNAL = {Documenta Mathematica},
    VOLUME = {8},
      YEAR = {2003},
     PAGES = {331--408},
      ISSN = {1431-0635},
   MRCLASS = {11G18 (11F11 14H40)},
MRREVIEWER = {Alessandra Bertapelle},
}

%

\bib{FGMN}{article}{
  AUTHOR = {Fern\'{a}ndez, Julio},
  AUTHOR = {Gu\`ardia, Jordi},
  AUTHOR = {Montes, Jes\'{u}s},
  AUTHOR = {Nart, Enric},
     TITLE = {Residual ideals of {M}ac{L}ane valuations},
   JOURNAL = {J. Algebra},
    VOLUME = {427},
      YEAR = {2015},
     PAGES = {30--75},
      ISSN = {0021-8693},
   MRCLASS = {13A18 (11S05 12J10)},
  MRNUMBER = {3312294},
}

	
\bib{KW}{article}{
  author={Kunzweiler, Sabrina},
   author = {Wewers, Stefan},
   title={Integral differential forms for superelliptic curves},
   
  eprint = {arxiv:2003.12357}, 
  year =         {2020}, 
}

%

\bib{LiuBook}{book}{
    AUTHOR = {Liu, Qing},
     TITLE = {Algebraic geometry and arithmetic curves},
    SERIES = {Oxford Graduate Texts in Mathematics},
    VOLUME = {6},
      NOTE = {Translated from the French by Reinie Ern\'{e},
              Oxford Science Publications},
 PUBLISHER = {Oxford University Press, Oxford},
      YEAR = {2002},
     PAGES = {xvi+576},
      ISBN = {0-19-850284-2},
   MRCLASS = {14-01 (11G30 14A05 14A15 14Gxx 14Hxx)},
MRREVIEWER = {C\'{\i}cero Carvalho},
}

\bib{LL}{article}{
  AUTHOR = {Liu, Qing},
  AUTHOR = {Lorenzini, Dino},
     TITLE = {Models of curves and finite covers},
   JOURNAL = {Compositio Math.},
  FJOURNAL = {Compositio Mathematica},
    VOLUME = {118},
      YEAR = {1999},
    NUMBER = {1},
     PAGES = {61--102},
      ISSN = {0010-437X},
   MRCLASS = {14G20 (11G20 14H25 14H30)},
  MRNUMBER = {1705977},
MRREVIEWER = {Carlo Gasbarri},
       DOI = {10.1023/A:1001141725199},
       URL = {https://doi-org.proxy01.its.virginia.edu/10.1023/A:1001141725199},
}

\bib{MacLane}{article}{
    AUTHOR = {MacLane, Saunders},
     TITLE = {A construction for absolute values in polynomial rings},
   JOURNAL = {Trans. Amer. Math. Soc.},
  FJOURNAL = {Transactions of the American Mathematical Society},
    VOLUME = {40},
      YEAR = {1936},
    NUMBER = {3},
     PAGES = {363--395},
      ISSN = {0002-9947},
   MRCLASS = {13A18 (13F20)},
       URL = {https://doi-org.proxy01.its.virginia.edu/10.2307/1989629},
}

\bib{OS1old}{article}{
   author={Obus, Andrew},
   author = {Srinivasan, Padmavathi},
   title={Conductor-discriminant inequality for hyperelliptic curves
     in odd residue characteristic},
   date={2019},
   eprint={arxiv:1910:02589v1},
}

\bib{OS1}{article}{
   author={Obus, Andrew},
   author = {Srinivasan, Padmavathi},
   title={Conductor-discriminant inequality for hyperelliptic curves
     in odd residue characteristic},
   date={2019},
   eprint={arxiv:1910:02589v2},
}

\bib{ObusWewers}{article}{
   author={Obus, Andrew},
   author = {Wewers, Stefan},
   title={Explicit resolution of weak wild arithmetic surface singularities},
   date={2018},
   eprint={arxiv:1805.09709v3},
}

\bib{Ruth}{article}{
  author = 	 {R{\"u}th, Julian},
  title = 	 {Models of curves and valuations},
  date =         {2014}, 
  note = 	 {Ph.D. Thesis, Universit\"{a}t Ulm}, 
  eprint = {https://oparu.uni-ulm.de/xmlui/handle/123456789/3302},
 doi = {10.18725/OPARU-3275}
}

\bib{RuthSage}{article}{
  author = 	 {R{\"u}th, Julian},
  title = 	 {A framework for discrete valuations in Sage},
  eprint = {https://trac.sagemath.org/ticket/21869}
}

%
%
%

\bib{PadmaTame}{article}{
  author = 	 {Srinivasan, Padmavathi},
  title = 	 {Conductors and minimal discriminants of
    hyperelliptic curves: a comparison in the tame case},
  eprint = {arxiv:1910.08228v1}, 
  year =         {2019}, 
}

\bib{StacksProject}{article}{
author = {The Stacks Project Authors},
title = {The Stacks Project},
eprint = {https://stacks.math.columbia.edu}
}

\end{biblist}
\end{bibdiv}

\end{document}